\title{The $t$-structure induced by an $n$-tilting module}
\subjclass[2010]{18E15, 18E30, 18E40, 16D90, 16E10, 16E30}
\keywords{Tilting modules, t-structures, model structures, Grothendieck categories}
\thanks{Research supported by Progetto di
Eccellenza Fondazione Cariparo ``Algebraic Structures and their applications''.}
\author[S. Bazzoni]{Silvana Bazzoni}
\address[Silvana Bazzoni]{%
Dipartimento di Matematica \\
Universit\`a di Padova \\
    Via Trieste 63, 35121 Padova (Italy)}
\email{bazzoni@math.unipd.it}
\newcommand{\bbN}{\mathbb{N}}
\newcommand{\bbZ}{\mathbb{Z}}
\newcommand{\Hom}{\operatorname{Hom}}
\newcommand{\End}{\operatorname{End}}
\newcommand{\Ext}{\operatorname{Ext}}
\newcommand{\Tor}{\operatorname{Tor}}
\newcommand{\Cogen}{\operatorname{Cogen}}
\newcommand{\Gen}{\operatorname{Gen}}
\newcommand{\Ker}{\operatorname{Ker}}
\newcommand{\tr}{\operatorname{tr}}
\newcommand{\Img}{\operatorname{Im}}
\newcommand{\Coker}{\operatorname{Coker}}
\newcommand{\cone}{\operatorname{cone}}
\newcommand{\A}{\mathcal{A}}
\newcommand{\B}{\mathcal{B}}
\newcommand{\C}{\mathcal{C}}
\newcommand{\D}{\mathcal{D}}
\newcommand{\E}{\mathcal{E}}
\newcommand{\F}{\mathcal{F}}
\newcommand{\clH}{\mathcal{H}}
\newcommand{\I}{\mathcal{I}}
\newcommand{\K}{\mathcal{K}}
\newcommand{\M}{\mathcal{M}}
\newcommand{\N}{\mathcal{N}}
\newcommand{\clP}{\mathcal{P}}
\newcommand{\clKP}{\mathcal{KP}}
\newcommand{\clKI}{\mathcal{KI}}
\newcommand{\T}{\mathcal{T}}
\newcommand{\U}{\mathcal{U}} 
\newcommand{\V}{\mathcal{V}}
\newcommand{\W}{\mathcal{W}}
\newcommand{\Ch}{\mathcal{C}h}   
\newcommand{\Modr}[1]{\mathrm{Mod}\textrm{-}{#1}}
\newcommand{\modr}[1]{\mathrm{mod}\textrm{-}{#1}}
\newcommand{\ModR}{\mathrm{Mod}\textrm{-}R}
\newcommand{\RMod}{R\textrm{-}\mathrm{Mod}}
\newcommand{\modR}{\mathrm{mod}\textrm{-}R}
\newcommand{\Rmod}{R\textrm{-}\mathrm{mod}}
\newcommand{\Add}{\mathrm{Add}}
\newcommand{\Ho}{\operatorname{Ho}}
\newcommand{\Rder}[1]{\mathbf{R}{#1}}
\newcommand{\Lotimes}{\otimes^{\mathbf{L}}}
\newcommand{\RHom}{\Rder\Hom}
\theoremstyle{plain}
\newtheorem{thm}{Theorem}[section]
\newtheorem{lem}[thm]{Lemma}
\newtheorem{prop}[thm]{Proposition}
\newtheorem{cor}[thm]{Corollary}
\newtheorem{fact}[thm]{Fact}
\theoremstyle{definition}
\newtheorem{defn}[thm]{Definition}
\newtheorem{nota}[thm]{Notation}
\theoremstyle{remark}
\newtheorem{rem}[thm]{Remark}
\newtheorem{expl}[thm]{Example}
\newcommand{\etalchar}[1]{$^{#1}$}
\begin{document}

\begin{abstract} We study the $t$-structure induced by an $n$-tilting module $T$ in the derived category $\D(R)$ of a ring $R$. Our main objective is to determine when the heart of the $t$-structure is a Grothendieck category. We obtain characterizations in terms of properties of the module category over the endomorphism ring of $T$ and as a main result we prove that the heart is a Grothendieck category if and only if $T$ is a pure projective $R$-module.\end{abstract}

\maketitle

\setcounter{tocdepth}{1}
\tableofcontents

\section*{Introduction}

The notion of $t$-structure in a triangulated category was introduced by Beilinson, Bernstein and Deligne \cite{BBD} in
a geometric context. Its impact and relevance in the algebraic setting has become more and more apparent and 
many constructions of $t$-structures are now available. A first important example is provided by the $t$-structure associated to torsion pairs in abelian categories \cite{HRS}.

 One of the key results about $t$-structures proved in \cite{BBD} is that their heart is an abelian category and a lot of work has been done to determine when the heart of some classes of $t$-structures is a particularly nice category, like a Grothendieck or even a module category.
  For instance it is well know that the heart of the $t$-structure induced by a finitely generated tilting module is equivalent to the module category over the endomorphism ring of the module. Colpi, Gregorio and Mantese \cite{CGM07}  proved that a faithful torsion pair in a module category with torsion free class closed under direct limits, induces a $t$-structure whose heart is a Grothendieck category. In particular this applies to $1$-cotiling torsion pairs.
  {\v{S}}{\v{t}}ov{\'{\i}}{\v{c}}ek \cite{Sto14} generalized this result to an arbitray $n$-cotilting module by using powerful tools from model structures.
  
   A  detailed study of properties of the heart of $t$-structures induced by torsion pairs in a Grothendieck category has been carried on by Parra and Saor{\'{\i}}n \cite{PS}. In particular, they prove that if the torsion class is cogenerating, then the heart is a Grothendieck category 
 if and only if the torsion free class is closed under direct limits.  This applies to the case of a tilting torsion class  and a natural question posed by Saor{\'{\i}}n was to decide if the closure under direct limit of the tilting torsion free class  implies necessarily that the tilting module is finitely generated.
  This has been answered in the paper \cite{BHPST} were it is shown that a tilting torsion free class is closed under direct limits if and only if the tilting module is pure projective and  examples of non finitely generated pure projective tilting modules are exhibited.

  In this paper we consider the $t$-structure induced by an $n$-tilting module $T$ over a ring $R$ and our main interest is to determine when the heart of the $t$-structure is a Grothendieck category. We obtain characterizations in terms of properties of the module category over the endomorphism ring of $T$ and as a main result we prove that the heart is a Grothendieck category if and only if $T$ is a pure projective $R$-module.
  
 The paper is organized as follows. After the necessary preliminaries, in Section~\ref{s:model} we recall the basic definitions and results about model structures and the relations developed by Hoevey between cotorsion pairs in $\Modr R$ and model structures on the category $\Ch (R)$ of unbounded complexes of $R$-modules. In Section~\ref{s:t-structure} we use the model structure on $\Ch (R)$ corresponding to an $n$-tilting cotorsion pair to describe the $t$-structure in the derived category $\D(R)$ of $R$, induced by an $n$-tilting module $T$. In Section~\ref{s:the heart} we study the heart of the $t$-structure, its objects and their cohomologies. In particular, we show that $T$ is a projective generator of the heart and, imitating the arguments on the theory of derivators used by  {\v{S}}{\v{t}}ov{\'{\i}}{\v{c}}ek \cite{Sto14} we show that the inclusion of the heart $\clH$ in $\D(R)$ extends to an equivalence beteween $\D(\clH)$ and $\D(R)$ (Theorem~\ref{T:derivator-equiv}).
  
 In Section~\ref{s:End(T)} we apply the celebrated Gabriel-Popescu's Theorem and a result proved in \cite{BMT} about the derived equivalence induced by a good $n$-tilting $R$-module $T$ between $\D(R)$ and a localization of $\D(S)$, where $S$ is the endomorphism ring of $T$.
We  characterize the case in which the heart is a Grothendieck category, in terms of properties of $S$-modules. In particular, we show that the heart is a Grothendieck category if and only if for every right $S$-module $M$ the derived tensor product $M\Lotimes_S T$ is an object of the heart (Theorem~\ref{T:E-hereditary}).
Moreover, the heart is a Grothendieck category if and only if $S$ admits a two sided idempotent ideal $A$, projective as right $S$-module, such that the canonical morphism $S\to S/A$ is a homological ring epimorphism such that $S/A$ acts as a ``generalized universal localization'' (Theorem~\ref{T:S/A}).

These characterizations allow to describe the direct limits in the heart and in Section~\ref{s:dir-lim-heart}, we show how it is possible to compute direct limits in the heart by means of direct limits of complexes of modules.

Finally in Section~\ref{S:7} we study the consequences of the Grothendieck condition on the heart in terms of closure properties of classes of $R$-modules. This allows us to prove our main result (Theorem~\ref{T:characterization}), that is we show that the heart $\clH$ is a Grothendieck category if and only if the tilting module $T$ is pure projective, generalizing to the case of $n>1$, the result proved in \cite{BHPST}.

We end by studying properties of the trace functor corresponding to an $n$-tilting module and properties of a pure projective $n$-tilting module.
   \section{Preliminaries}~\label{s:preliminaries}
$R$ will be an associative ring with unit. $\ModR$ ($\RMod$) will denote the category of right (left) $R$-modules and $\modR$  ($\Rmod$) the subcategory of finitely presented right (left) $R$-modules.

For more details about the terminology and the results stated in this section we refer to the book \cite{GT12}.

 Given a class 
$\M$  of objects of an abelian category $\C$ and an index $i\geq 0$, we denote by: 
\[\M ^{\perp_i}  =\{X\in \C\mid \Ext_{\C}^i(M, X)=0 \ {\rm for \ all}\ M\in \M \}.\]
\[\M ^{\perp}  =\{X\in \C \mid \Ext_{\C}^i(M, X)=0 \ {\rm for \ all}\ M\in \M \ {\rm for \ all\ }  i\in \bbN\}.\]

The classes $^{\perp_i}\M$ and $^\perp \M$ are defined symmetrically.

A pair $(\A, \B)$ of classes of objects of $\C$ is a \emph{%
cotorsion pair} provided that $\mbox{$\A$} = {}^{\perp_1}
\mbox{$\B$}
$ and $\mbox{$\B$} = \mbox{$\A$} ^{\perp_1}$. 

Recall that a full subcategory $\C'$ of an abelian category $\C$ is \emph{resolving } if it is closed under summands, extensions,  kernels of epimorphisms (in $\C'$) and is generating, that is, for every $X\in \A$ there is an epimorphism $C\to X$ with $C\in \C'$.

If moreover the epimorphism $C\to X$ can be chosen functorially in $X$, then $\C'$ is called \emph{functorially resolving}. 

The resolution dimension of an object $X\in \C$ with respect to a resolving subcategory $\C'$ is the minimun integer $n\ge 0$ for which there is an exact sequence $0\to C_n\to \dots\to C_1\to C_0\to X\to 0$ with $C_i\in\C'$ or $\infty $ if such an $n$ doesn't exist.

The notions of coresolving subcategories, functorially coresolving and coresolution dimension are defined dually.

Note that for any subcategory  $\C$ of $\Modr R$,
$^\perp \C$ is resolving and in particular, syzygy-closed.  
Dually, $\C^\perp$ is  coresolving and in particular, cosyzygy-closed. 

A cotorsion pair $(\A, \B)$ is called a \emph{hereditary cotorsion pair} if $\A={^\perp \B}$ and $\B=\A ^\perp $. 
%

A (hereditary) cotorsion pair $(\A, \B)$ in an abelian category $\C$ is \emph{complete} provided that every object $X\in \C$ admits a \emph{special $\B$-preenvelope}, that is there exists an exact sequence of the form $0\to X \to B \to A\to 0$  with $B\in \B$ and $A\in \A$. Equivalently, every object $X$ admits a \emph{special $\A$-precover}, that is there exists an exact sequence of the form $0\to B \to A \to X\to 0$  with $B\in \B$ and $A\in \A$.

 Preenvelopes and precovers are also called left and right approximations.
For a class $\M$ of objects of an abelian category $\C$, the pair 
$(^{\perp }(\M^{\perp}),\M^{\perp})$ is a (hereditary) cotorsion pair; it is called the cotorsion pair \emph{generated} by $\M$. Symmetrically, the pair $(^{\perp }\M,(^{\perp }\M)^\perp)$ is a (hereditary) cotorsion pair called the cotorsion pair \emph{cogenerated} by $\M$.  Every cotorsion pair generated by a \emph{set } of objects is complete, \cite{Q} or \cite{ET01}. Moreover, every cotorsion pair cogenerated by a class of pure injective objects is generated by a set of objects, hence complete (\cite{ET00}).


%
For every $R$-module $M$, $\Add M$ will denote the class of modules isomorphic to
summands of direct  direct sums of copies of $M$, 
and $\Gen M$ will denote the class of all epimorphic images of direct sums of 
copies of $M$. 

\begin{defn}\label{D:n-tilting} A right $R$-module $T$ is $n$-tilting if it satisfies the following conditions:
\begin{enumerate}
\item[(T1)] p.d.$T\leq n$;
\item[(T2)] $\Ext_R^i(T, T^{(\lambda)})=0$ for every cardinal $\lambda$ and every $i\geq 1$;
\item[(T3)] there exists an $r\geq 0$ and an exact sequence:
\[0\to R\to T_0\to T_{1}\to\dots \to T_r\to 0,\] where  $T_i\in \Add T$, for every $0\leq i\leq r$.
\end{enumerate}
A finitely generated $n$-tilting module is called {\sl classic}.
\end{defn}

If $T$ is a $n$-tilting module, $T^\perp$ is called $n$-tilting class and the cotorsion pair $(\A, T^\perp)$ generated by $T$ is called $n$-tilting cotorsion pair. The kernel $\A\cap T^\perp$ of the cotorsion pair coincides with $\Add T$.
Two $n$-tilting modules $T$ and $U$ are said to be {\sl equivalent} if $T^\perp =U^\perp$, or equivalently if $\Add T=\Add U$.

By \cite{BH08, BS07} an $n$-tilting class is of \emph{finite type}, that is there is a set $ \mathcal S$ of modules in $\A$ with a projective resolution consisting of finitely generated projective modules such that $\mathcal S^\perp= T^\perp$. In Crawley-Boevey terminology (see \cite{CB1}) this means that, for every $S\in \mathcal S$ the functors $\Ext^i_R(S, -)$ are coherent, hence that $n$-tilting classes are \emph{definable}, that is  they are closed under direct products, direct limits and pure submodules.

Note that all the syzygies of a classical $n$-tilting module are finitely generated (see \cite[Corollary 3.9]{BH09}).

Recall that a module is pure projective
 if and only if it has the projective property with respect to pure exact sequences.

By Warfield~\cite{War1} a module $M$ is pure projective if and only if every pure exact sequence $0\to A \to B \to M\to 0$ splits or, equivalently, if and only if it is a direct summand of a direct sum of finitely presented modules.

%

\section{Model structures}~\label{s:model}

We describe some \emph{model structures} on the category $\Ch(R)$ of unbounded complexes of $R$-modules whose homotopy category is the derived category $\D (R)$ of $R$.

For the definition of a model structure we refer to the book by Hoevy \cite{Hov99} or to the survey \cite{Sto13}. 

We just recall that a model structure on a category $\C$ consists
of three classes of morphisms Cof, W, Fib called \emph{cofibrations}, \emph{weak equivalences} and \emph{fibrations}, respectively, satisfying certain axioms.

A \emph{model category} $\C$ is an abelian cocomplete category with a model structure.
 An object $X$ in a pointed model category $ \C$ is called \emph{cofibrant (trivial)} if
the unique morphism from the initial object to $X$ is a cofibration (a weak equivalence) and it is called \emph{fibrant} if the unique morphism from $X$ to the terminal object is a fibration. 
 
 The homotopy category $\Ho \C$ is obtained by  formally inverting all morphisms in W.

An \emph{abelian model structure} on an abelian category $\C$ is a model structure such that cofibrations (fibrations) are the monomorphisms (epimorphisms) with cofibrant (fibrant) cokernels (kernels).

We recall a method discovered by Hoevey~ \cite{Hov02}, \cite{Hov07} and developed by Gillespies~\cite{G2, G3} and other authors    which allows to define a model structures on the category $\Ch(\C)$ of unbounded complexes over $\C$ starting from a complete cotorsion pair on $\C$. We state Hoevey's result only in the situation needed in the sequel. 

If $\C$ is a subclass of $\Modr R$ closed under extensions, following the notations used by Gillespie (see \cite{G2, G3} or \cite{Sto13}), we denote by $\tilde{\C}$ the class of all acyclic complexes of $\Ch (R)$ with terms  in $\C$  and cocycles in $\C.$

\begin{prop}\label{P:Hoevey}(\cite{Hov07}, \cite{G2, G3}) If $(\A, \B)$ is a  cotorsion pair in $\Modr R$ generated by a set of modules, there is an abelian model structure on $\Ch (R)$ given as follows:
\begin{enumerate}
\item Weak equivalences are quasi-isomorphisms.
\item Cofibrations (trivial cofibrations) are the monomorphism $f$ such that $\Ext^1_{\Ch (R)}(\Coker f, X)=0$, for every $X\in \tilde{\B}$ ($\Coker f\in \tilde{\A}$) and $C$ is a cofibrant object if and only if $\Ext_{\Ch (R)}(C, X)=0$, for every $X\in \tilde{\B}$.
\item Fibrations (trivial fibrations) are the epimorphisms $g$ such that\\ $\Ext^1_{\Ch (R)}(X, \Ker g)=0$, for every $X\in \tilde{\A}$ ($\Ker g\in \tilde{\B}$) and $F$ is a fibrant object if and only if $\Ext_{\Ch(R)}(X, F)=0$, for every $X\in \tilde{\A}$.
\end{enumerate}
The homotopy category of this model structure is the derived category $\D(R)$ of $R$.

Moreover, if $\C$, $\W$, $\F$ are the classes of cofibrant, trivial (acyclic) and fibrant objects, respectively, then $(\C, \W\cap \F)$ and $(\C\cap \W, \F)$ are complete cotorsion pairs in $\Ch (R)$.
\end{prop}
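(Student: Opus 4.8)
This is a result of Hovey~\cite{Hov02, Hov07} and Gillespie~\cite{G2, G3} (see also the survey~\cite{Sto13}), and the plan is to combine two ingredients: Hovey's correspondence, which identifies the abelian model structures on an abelian category with the triples $(\C, \W, \F)$ in which $\W$ is a thick class and $(\C \cap \W, \F)$, $(\C, \W \cap \F)$ are complete cotorsion pairs; and Gillespie's procedure for manufacturing two complete cotorsion pairs on $\Ch(R)$ out of a complete hereditary cotorsion pair on $\Modr R$. The target is to realize the wanted model structure with $\C$ the cofibrant objects, $\F$ the fibrant objects, and $\W = \E$ the class of \emph{all} acyclic complexes.

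First I would record that $(\A, \B)$ is complete, by the Eklof--Trlifaj theorem~\cite{ET01} since the pair is generated by a set, and hereditary, i.e.\ $\A = {}^{\perp}\B$; in the cases of interest the latter is automatic since $\B$ is coresolving (for an $n$-tilting cotorsion pair $\B = T^\perp$, and the closure of $T^\perp$ under cokernels of monomorphisms $B' \hookrightarrow B$ with $B', B \in T^\perp$ is immediate from the long exact sequence in $\Ext^i_R(T, -)$, $i \geq 1$). Next, following Gillespie~\cite{G2, G3}, I would introduce in $\Ch(R)$ the classes $\tilde{\A}$, $\tilde{\B}$ of the paragraph preceding the statement, put $\clP = {}^{\perp_1}\tilde{\B}$ and $\clQ = \tilde{\A}^{\perp_1}$ computed in $\Ch(R)$, and prove that both $(\tilde{\A}, \clQ)$ and $(\clP, \tilde{\B})$ are complete cotorsion pairs there. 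The decisive point is that completeness of $(\A, \B)$ forces $\tilde{\A}$ to be deconstructible — obtained from a \emph{set} of complexes by transfinite extensions, via Eklof's lemma applied to a suitable filtration — whence $(\tilde{\A}, \clQ)$ is complete, and that hereditariness of $(\A, \B)$ is inherited by these pairs, so that $\clP = {}^{\perp}\tilde{\B}$, $\clQ = \tilde{\A}^{\perp}$, together with the compatibility identities $\clP \cap \E = \tilde{\A}$ and $\clQ \cap \E = \tilde{\B}$; testing against disk complexes built from modules in $\B$ shows moreover that every complex in $\clP$ has all its terms in $\A$, and dually for $\clQ$.

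Granting this, $(\clP, \E, \clQ)$ is a Hovey triple: $\E$ is thick in $\Ch(R)$ — closed under summands and stable under two-out-of-three by the long exact homology sequence — and by the previous step $(\clP \cap \E, \clQ) = (\tilde{\A}, \clQ)$ and $(\clP, \clQ \cap \E) = (\clP, \tilde{\B})$ are complete cotorsion pairs, which is precisely the last assertion of the statement (with $(\C \cap \W, \F) = (\tilde{\A}, \clQ)$ and $(\C, \W \cap \F) = (\clP, \tilde{\B})$). Hovey's theorem~\cite{Hov02} then yields an abelian model structure on $\Ch(R)$ whose cofibrant, fibrant and trivial objects are exactly the complexes in $\clP$, $\clQ$ and $\E$ respectively. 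Unwinding the definition of an abelian model structure gives the stated descriptions: a cofibration is a monomorphism $f$ with $\Coker f$ cofibrant, i.e.\ with $\Ext^1_{\Ch(R)}(\Coker f, X) = 0$ for every $X \in \tilde{\B}$; a trivial cofibration is one with $\Coker f \in \clP \cap \E = \tilde{\A}$; a complex $C$ is cofibrant iff $C \in \clP = {}^{\perp}\tilde{\B}$, i.e.\ $\Ext_{\Ch(R)}(C, X) = 0$ for every $X \in \tilde{\B}$; and the assertions about fibrations and fibrant objects follow by the formal dual.

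It remains to identify $\Ho(\Ch(R))$ with $\D(R)$, i.e.\ to see that the weak equivalences are exactly the quasi-isomorphisms. A trivial cofibration is a monomorphism with cokernel in $\tilde{\A} \subseteq \E$, and a trivial fibration an epimorphism with kernel in $\tilde{\B} \subseteq \E$, so both are quasi-isomorphisms by the long exact homology sequence; since every weak equivalence factors as a trivial cofibration followed by a trivial fibration, every weak equivalence is a quasi-isomorphism. Conversely, factor a quasi-isomorphism $f$ as a cofibration $i$ followed by a trivial fibration $p$; then $p$ is a quasi-isomorphism and hence so is $i$ by two-out-of-three, so $\Coker i$ is acyclic (long exact homology sequence) as well as cofibrant (because $i$ is a cofibration), whence $\Coker i \in \clP \cap \E = \tilde{\A}$, $i$ is a trivial cofibration, and $f = p i$ is a weak equivalence. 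Therefore $\Ho(\Ch(R))$ is the localization of $\Ch(R)$ at the quasi-isomorphisms, that is $\D(R)$. The real work lies in the second paragraph — showing that $\tilde{\A}$, $\tilde{\B}$ induce \emph{complete} cotorsion pairs on $\Ch(R)$ and controlling the interplay of the ``small'' classes $\tilde{\A}, \tilde{\B}$ with the ``large'' classes $\clP, \clQ$ — and this is where the deconstructibility (set-theoretic) arguments and the hereditary hypothesis genuinely enter.
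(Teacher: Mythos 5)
Your proposal is correct and follows exactly the route of the sources the paper cites for this statement (Hovey's correspondence between abelian model structures and compatible complete cotorsion pairs, combined with Gillespie's construction of the induced pairs $(\tilde{\A},\clQ)$ and $(\clP,\tilde{\B})$ on $\Ch(R)$ via deconstructibility and the hereditary hypothesis); the paper itself gives no independent proof but defers to precisely these references. Nothing to add.
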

This allows to describe the morphisms in $\D(R)$. In fact, if $X$ is a cofibrant object in $\Ch (R)$ and $Y$ is a fibrant object in $\Ch (R)$, then:
\[\Hom_{\K(R)}(X, Y)=\Hom_{\D(R)}(X, Y),\]
where $\K(R)$ is the homotopy category of $R$.

To describe the cofibrant and fibrant objects in the model structure induced by a complete cotorsion pair we will make use of the following well known formula:
\[(\ast)\quad \Ext^1_{dw}(X[1], Y)\cong \Hom_{\K(R)}(X, Y),\]
where $\Ext^1_{dw}$ denotes the subgroup of $\Ext^1_{\Ch (R)}$ consisting of the degreewise splitting short exact sequences.

\begin{expl}\label{E:1} If $\clP$ is the class of projective $R$-modules, the model structure induced by the cotorsion pair $(\clP, \Modr R)$ is called the canonical projective model structure: The trivial objects are the acyclic complexes, the cofibrant objects (also called $K$-projective) are the complexes $X$ with projective terms such that $ \Hom_{\K(R)}(X, N)=0$ for every acyclic complex $N$ and every complex is a fibrant object. Moreover, if $\clKP$ is the class of $K$-projective complexes and $\N$ the class of acyclic complexes, the pair $(\clKP, \N)$ is a complete cotorsion pair in $\Ch (R)$.
 \end{expl}
\begin{expl}\label{E:2} Symmetrically, if $\I$ is the class of injective $R$-modules, the model structure induced by the cotorsion pair $( \Modr R, \I)$ is called the canonical injective model structure: The trivial objects are the acyclic complexes, the fibrant objects (also called $K$-injective) are the complexes $Y$ with injective terms such that $ \Hom_{\K(R)}( N, Y)=0$ for every acyclic complex $N$ and every complex is a cofibrant object. Moreover, if $\clKI$ is the class of $K$-injective complexes and $\N$ the class of acyclic complexes, the pair $( \N, \clKI)$ is a complete cotorsion pair in $\Ch (R)$.
 \end{expl}

\begin{rem}\label{R:bounded-complex} If $(\A, \B)$ is a hereditary cotorsion pair in $\Modr R$ and $X$ is a bounded above complex with terms in $\A$, then $X$ is cofibrant in the model structure induced by $(\A, \B)$ (the proof is similar to the proof that a bounded above complex with projective terms is $K$-projective  (see \cite[Lemma 2.3.6]{Hov99}).
Dually, if $X$ is a bounded below complex with terms in $\B$, then $X$ is fibrant.
\end{rem}
We are now in a position to describe some particular model structures: the model structure induced by a module of finite homological dimension. The following results  is obtained by imitating the arguments used by \v{S}{\v{t}}ov{\'{\i}}{\v{c}}ek~\cite[Theorem 3.17]{Sto14} to describe the model structure induced by a cotilting module.

\begin{thm}\label{T:M-model-structure} Let $M$ be an $R$-module with p.d. $M\leq n$. Let $(\A, \B)$ be the hereditary cotorsion pair generated by $M$. There is an abelian model structure on $\Ch (R)$ described as follows:
\begin{enumerate}
\item Cofibrations (trivial cofibrations) are the monomorphism $f$ such that $\Ext^1_{\Ch (R)}(\Coker f, X)=0$, for every $X\in \tilde{\B}$ ($\Coker f\in \tilde{\A}$).
\item Fibrations (trivial fibrations) are the epimorphisms $g$ such that $\Ker g$ has terms in $ \B$ ($\Ker g \in \tilde{\B}$).
\end{enumerate}
\end{thm}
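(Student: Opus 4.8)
The plan is to mimic the Hovey--Gillespie machinery already invoked in Proposition~\ref{P:Hoevey}, but starting from the hereditary cotorsion pair $(\A,\B)$ generated by $M$ rather than from a cotorsion pair generated merely by a \emph{set}. The point is that although $(\A,\B)$ is generated by the single module $M$ (hence by a set), the naive model structure from Proposition~\ref{P:Hoevey} has fibrant objects detected by $\tilde\B$, whereas here we want fibrations whose kernels have terms in $\B$ \emph{degreewise} (with no acyclicity or cocycle condition). So the first step is to identify the two complete cotorsion pairs in $\Ch(R)$ that will serve as $(\Cof,\We\cap\Fib)$ and $(\Cof\cap\We,\Fib)$. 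For the first I would take the cotorsion pair $(\tilde\A, ?)$ — no: rather, following {\v{S}}{\v{t}}ov{\'{\i}}{\v{c}}ek's argument, the trivially-fibrant$\,\cap\,$fibrant class should be $\mathrm{dw}\widetilde\B$, the class of complexes with terms in $\B$, and the cofibrant-$\cap$-trivial class should be $\widetilde{\A}$, the acyclic complexes with terms and cocycles in $\A$.

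The key steps, in order, would be: (1) Show that $(\widetilde\A,\ \mathrm{dw}\widetilde\B)$ is a complete hereditary cotorsion pair in $\Ch(R)$ — completeness because $\widetilde\A$ is generated by a set of complexes (the sphere and disk complexes on the generators of $\A$, together with the disks, using that $\A={}^\perp\B$ is syzygy-closed and that $M$ has finite projective dimension so $\widetilde\A$ is exactly the acyclic complexes with terms in $\A$). Here the finiteness of $\mathrm{p.d.}\,M\le n$ is what forces "terms in $\A$, acyclic" to automatically have cocycles in $\A$, by dimension shifting. (2) Show that $(\mathrm{dw}\widetilde\A,\ \widetilde\B)$ is likewise a complete hereditary cotorsion pair, where $\mathrm{dw}\widetilde\A$ is the class of complexes with terms in $\A$ and $\widetilde\B$ the acyclic complexes with terms and cocycles in $\B$; the subtlety is that one needs $\widetilde\B$ to be right-orthogonal to all degreewise-$\A$ complexes, which follows from the hereditary hypothesis together with formula $(\ast)$ for $\Ext^1_{dw}$ and the usual hypercohomology spectral sequence / degreewise splitting arguments. (3) Check that these two cotorsion pairs are compatible in Hovey's sense: the classes of acyclic complexes coincide in both (both are the $\We$ of the desired structure, namely the quasi-isomorphisms' trivial objects), i.e. $\widetilde\A = \mathrm{dw}\widetilde\A \cap \mathcal{W}$ and $\widetilde\B = \mathrm{dw}\widetilde\B\cap\mathcal W$ with $\mathcal W$ the acyclic complexes. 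Then invoke Hovey's correspondence theorem to produce the abelian model structure, and finally unwind the definitions of cofibration/fibration from the two cotorsion pairs to get the stated descriptions in (1) and (2) — a fibration being an epimorphism whose kernel lies in $\mathrm{dw}\widetilde\B$, i.e. has terms in $\B$, and a trivial fibration one whose kernel lies in $\widetilde\B$.

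The main obstacle I anticipate is step (2): verifying that $\widetilde\B = (\mathrm{dw}\widetilde\A)^{\perp_1}$ in $\Ch(R)$, i.e. that a complex right-$\mathrm{Ext}^1$-orthogonal to every degreewise-$\A$ complex is precisely an acyclic complex with terms and cocycles in $\B$. One inclusion uses that disks and spheres on $\A$-modules are degreewise-$\A$ (forcing terms in $\B$ and acyclicity), but the converse — that acyclicity plus cocycles in $\B$ suffices for vanishing of $\mathrm{Ext}^1_{\Ch(R)}$ against \emph{all} degreewise-$\A$ complexes, not just bounded ones — is where one must genuinely use $\mathrm{p.d.}\,M\le n$ (so that $\A$ has bounded-length $\B$-coresolutions available, or rather that every module has a bounded $\A$-resolution of length $\le n$), combined with Remark~\ref{R:bounded-complex} and a limit/Mittag-Leffler argument on brutal truncations to reduce the unbounded case to the bounded one. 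The hereditary hypothesis is essential throughout to guarantee that the cocycle conditions propagate and that the cotorsion pairs stay hereditary, so that $(\ast)$ can be applied freely.
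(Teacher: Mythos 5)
Your step (2) is not just the hard point of your outline: it is false, and this is the genuine gap. Under the stated hypotheses the pair (complexes with all terms in $\A$, $\tilde{\B}$) need not be a cotorsion pair in $\Ch (R)$, so no truncation or Mittag--Leffler argument can establish $\tilde{\B}=(\textrm{degreewise-}\A)^{\perp_1}$. Concretely, take $M=R$ over $R=\bbZ/4\bbZ$, so that p.d.$\,M=0$ and $(\A,\B)=(\ProjR,\Modr R)$, whence $\tilde{\B}$ is the class of all acyclic complexes. The complex $X\colon\cdots\to R\overset{2}\to R\overset{2}\to R\to\cdots$ has all terms in $\A$ and is acyclic but not contractible; since $X$ is degreewise projective, every extension by $X$ is degreewise split, so formula $(\ast)$ gives $\Ext^1_{\Ch (R)}(X,X[-1])\cong\Hom_{\K(R)}(X[-1],X[-1])\neq 0$, although $X[-1]\in\tilde{\B}$. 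The same example refutes the parenthetical claim in your step (1) that p.d.$\,M\le n$ forces acyclic complexes with terms in $\A$ to have cocycles in $\A$ (here the cocycles are $\bbZ/2\bbZ$): dimension shifting with the finite projective dimension of $M$ works only when $M$ sits in the first variable of $\Ext$, i.e.\ on the $\B$-side, not on the $\A$-side. Accordingly, the model structure you would obtain from your two pairs would have cofibrations equal to the monomorphisms with degreewise-$\A$ cokernel, which is not what the theorem asserts: the theorem deliberately keeps the $\Ext^1$-orthogonality description of cofibrations, and the degreewise description on the cofibration side is reserved for the dual Theorem~\ref{T:C-model-structure}, where pure injectivity and finite injective dimension of the cogenerating module are available.

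The paper's argument is much shorter and avoids your step (2) altogether: since $(\A,\B)$ is generated by a set, Proposition~\ref{P:Hoevey} already yields the abelian model structure, with cofibrations and trivial cofibrations exactly as in item (1) of the statement; the only thing left to prove is that the fibrant objects (the complexes $Y$ with $\Ext^1_{\Ch (R)}(X,Y)=0$ for all $X\in\tilde{\A}$) are precisely the complexes with all terms in $\B$, which is Lemma~\ref{L:fibrant}. One inclusion uses the disk complexes $D^n(A)$; for the converse one takes a special preenvelope $0\to Y\to I_0\to N\to 0$ in the cotorsion pair $(\N,\clKI)$, shows $N\in\tilde{\B}$ by dimension shifting along the acyclic complex $N$ using p.d.$\,M\le n$, and concludes with formula $(\ast)$ and the $K$-injectivity of $I_0$. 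This inclusion, namely that having terms in $\B$ already forces right $\Ext^1$-orthogonality to all of $\tilde{\A}$, is the real content of your step (1), and your outline does not address it (the analogous difficulty was displaced into the false step (2)). If you discard step (2), keep Proposition~\ref{P:Hoevey}'s description of cofibrations, and prove step (1) along the lines just indicated, you recover the paper's proof.
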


\begin{proof} In view of Proposition~\ref{P:Hoevey} the only thing which remains to be proved is the description of the fibrant objects. The statement  will follow by the next lemma.\end{proof}
\begin{lem}\label{L:fibrant} Let $M$ be as in Theorem~\ref{T:M-model-structure} and let $X\in \Ch (R)$. Then, $Y$ is a fibrant object in the model structure induced by the hereditary cotorsion pair $(\A, \B)$ generated by $M$ if and only if $Y$ has all the terms in $\B$.
\end{lem}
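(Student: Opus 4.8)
The plan is to characterize the fibrant objects of the model structure of Theorem~\ref{T:M-model-structure} as those complexes $Y$ whose terms all lie in $\B$. One inclusion is essentially free: by Proposition~\ref{P:Hoevey}, $Y$ is fibrant if and only if the unique map $Y\to 0$ is a fibration, i.e. an epimorphism with kernel $Y$ having terms in $\B$; wait — we must be careful, since in Proposition~\ref{P:Hoevey} fibrant means $\Ext_{\Ch(R)}(X,Y)=0$ for every $X\in\tilde\A$, whereas the asserted description in Theorem~\ref{T:M-model-structure} uses ``terms in $\B$'' for the kernels of fibrations. So the real content is to show that $\Ext^1_{\Ch(R)}(X,Y)=0$ for all $X\in\tilde\A$ \emph{precisely when} $Y$ has all terms in $\B$. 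First I would prove the easy direction: if $Y$ is fibrant in the sense of Proposition~\ref{P:Hoevey}, apply the defining vanishing to the acyclic complexes $X=D^k(M)$ (the disc on $M$ concentrated in degrees $k,k-1$, which lies in $\tilde\A$ since $M\in\A$ and $\A$ is closed under the relevant cocycles), and use the standard adjunction isomorphism $\Ext^1_{\Ch(R)}(D^k(M),Y)\cong\Ext^1_R(M,Y_k)$ to conclude $Y_k\in M^{\perp_1}=\B$ for all $k$.

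The substantive direction is the converse: assume every term $Y_k$ lies in $\B$, and show $\Ext^1_{\Ch(R)}(X,Y)=0$ for every $X\in\tilde\A$. Here I would exploit that $(\A,\B)$ is \emph{hereditary} and that $M$ (hence every object of $\A$) has projective dimension $\le n$, so that $\B$ is not merely $M^{\perp_1}$ but $M^\perp=\A^\perp$, and moreover $\B$ is coresolving and closed under cokernels of monomorphisms. The key step is a dimension-shifting argument on the complex $X$: since $X\in\tilde\A$ is acyclic with terms and cocycles in $\A$, and $\A$ consists of modules of projective dimension $\le n$, one shows by induction on $n$ that an extension $0\to Y\to E\to X\to 0$ in $\Ch(R)$ splits. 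Concretely, I would imitate \cite[Theorem~3.17]{Sto14}: filter or truncate using the brutal/soft truncations of $X$, reduce to the case of computing $\operatorname{Ext}$ of a single module of $\A$ against $Y$ degreewise, and push the obstruction up the complex using that $\Ext^{>1}_R(A,B)=0$ for $A\in\A,B\in\B$ (hereditariness) together with $\Ext^{i}_R(A,-)=0$ for $i>n$. The formula $(\ast)$, $\Ext^1_{dw}(X[1],Y)\cong\Hom_{\K(R)}(X,Y)$, will be useful to separate the degreewise-split part of the Ext group (which vanishes because $Y$ with terms in $\B$ receives no nonzero homotopy class from an acyclic $X$ with terms in $\A$, by the usual lifting argument against the complete cotorsion pair) from the non-degreewise-split part (controlled by the term-by-term $\Ext^1_R(A_k,Y_k)=0$).

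I expect the main obstacle to be the bookkeeping in this dimension shift: one must arrange a genuine induction, not just a single shift, because an arbitrary $X\in\tilde\A$ need not have terms of projective dimension $0$, only $\le n$, so a short exact sequence $0\to X'\to P^\bullet\to X\to 0$ replacing the terms of $X$ by projectives still has $X'\in\tilde\A'$ for a smaller dimension class, and one iterates. Keeping track of acyclicity and of the cocycles remaining in the relevant classes throughout this process — and verifying that at each stage the hypothesis ``$Y$ has terms in $\B$'' is preserved and suffices — is the delicate point. Once the vanishing $\Ext^1_{\Ch(R)}(X,Y)=0$ is established for all $X\in\tilde\A$, the lemma follows immediately, and with it the description of fibrations in Theorem~\ref{T:M-model-structure}, since a morphism $g$ is a fibration exactly when it is an epimorphism whose kernel is fibrant in the sense just characterized, i.e. has terms in $\B$.
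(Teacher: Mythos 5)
Your first (easy) direction contains a small but genuine error: testing only against the discs $D^k(M)$ gives $Y^k\in M^{\perp_1}$, and $M^{\perp_1}$ is in general strictly larger than $\B=M^{\perp}=\A^{\perp_1}$ when $\operatorname{p.d.}M>1$ (you yourself observe later that $\B$ is ``not merely $M^{\perp_1}$ but $M^\perp$'', so the identification $M^{\perp_1}=\B$ you invoke here contradicts your own setup). The repair is what the paper does: test against $D^k(A)$ for \emph{every} $A\in\A$ (equivalently, against the discs on all syzygies of $M$, which lie in $\A$ since $\A$ is resolving), and use $\B=\A^{\perp_1}$.

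The substantive gap is in the converse. You correctly reduce, via termwise splitting and formula $(\ast)$, to showing $\Hom_{\K(R)}(X,Y)=0$ for all $X\in\tilde\A$, but neither mechanism you propose for this step works. The ``usual lifting argument'' that kills $\Hom_{\K(R)}(X,N)$ for $X\in\tilde\A$ requires $N$ to be acyclic with cocycles in $\B$: one needs a chain map to carry cocycles of $X$ into boundaries of $N$, and the epimorphisms from $N^{i-1}$ onto those boundaries to have kernels in $\B$. Your $Y$ is an arbitrary complex with terms in $\B$, so neither hypothesis is available --- that is precisely the content of the lemma, not something one can quote. The dimension-shifting induction also points the wrong way: from a short exact sequence $0\to X'\to P\to X\to 0$ with $P$ a contractible complex of projectives, vanishing of $\Ext^1_{\Ch(R)}(X',Y)$ yields $\Ext^2_{\Ch(R)}(X,Y)=0$, not $\Ext^1$; to get $\Ext^1_{\Ch(R)}(X,Y)=0$ you would need surjectivity of $\Hom_{\Ch(R)}(P,Y)\to\Hom_{\Ch(R)}(X',Y)$, which is again an unsolved lifting problem. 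The idea you are missing is the paper's use of the injective model structure: take a special preenvelope $0\to Y\to I_0\to N\to 0$ in the complete cotorsion pair $(\N,\clKI)$, so $I_0$ is $K$-injective and $N$ is acyclic with terms in $\B$ (as $\B$ is coresolving and contains the injectives). The hypothesis $\operatorname{p.d.}M\le n$ enters exactly here: dimension shifting along the acyclic complex $N$ shows all its cocycles lie in $\B$, hence $N\in\tilde\B$ and the usual lifting argument \emph{does} apply to $N$. The triangle $N[-1]\to Y\to I_0\to N$ then traps $\Hom_{\K(R)}(X,Y)$ between $\Hom_{\K(R)}(X,N[-1])=0$ and $\Hom_{\K(R)}(X,I_0)=0$, completing the proof.
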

\begin{proof} The proof is inspired by  \cite[Theorem 3.17]{Sto14}, but we give the details for the sake of completeness.

Let $Y$ be a fibrant object. Then $\Ext^1_{\Ch (R)}(X, Y)=0$, for all $X\in \tilde{\A}$. For every $A\in \A$, let $D^n(A)$ be the complex defined by $0\to A\overset{1_A}\to A \to 0$ with $A$ in degrees $n$ and $n+1$. Then $D^n(A)\in \tilde{\A}$ and by 
 \cite[Lemma 3.1.5]{G3})  $\Ext^1_{\Ch (R)}(D^n(A), Y)\cong \Ext^1_R(A, Y^n)$, hence $Y^n\in \B$.
 
 Conversely, assume that all the terms $Y^i$ of a complex $Y$ are in $\B$. We claim that $Y$ is fibrant that is $\Ext^1_{\Ch (R)}(X, Y)=0$ for every $X\in \tilde{\A}$. Clearly $\Ext^1_{\Ch (R)}(X, Y)\cong \Ext^1_{dw}(X, Y)$ and by formula $(\ast)$, $\Ext^1_{dw}(X[1], Y)\cong \Hom_{\K(R)}(X, Y)$. 
 
  Consider a special preenvelope 
 \[0\to Y\to I_0\to N\to 0,\] of $Y$ with respect to the complete cotorsion pair $(\N, \clKI)$ in $\Ch(R)$ (described in Example~\ref{E:2}).
 
 Then $N$ is an exact complex and, since $\B$ contains the injective modules, all the terms $I_0^i$ of $I_0$ are in $\B$, hence also the terms $N^i$ are in $\B$, since $\B$ is coresolving.
  For every $i\in \bbZ$ consider the short exact sequences
  $0\to \Ker d^i_N\to N^i\to \Ker d^{i+1}_N\to 0$. By dimension shifting and by the condition p.d.$M\leq n$ we conclude that $\Ext^j_R(M, \Ker d^i_N)=0$, for every $i, j\in \bbZ$. So $N\in \tilde{\B}$.  Thus, for every $X\in \tilde{\A}$, \[(a)\quad \Ext^1_{\Ch (R)}(X, N) \cong \Ext^1_{dw}(X, N)\cong\Hom_{\K(R)}(X[-1], N))=0.\] Consider the triangle
  \[N[-1]\to Y \to I_0\to N\] and let $X\in \tilde{\A}$. We have an exact sequence
  \[\Hom_{\K(R)}(X, N[-1])\to \Hom_{\K(R)}(X, Y)\to  \Hom_{\K(R)}(X, I_0),
  \]
  where the last term vanishes since $I_0$ is $K$-injective (and $X$ is acyclic) and the first term vanishes by  $(a)$. So $Y$ is fibrant. \end{proof}
\begin{cor}\label{C:hom-in-homotopy} In the notations of Theorem~\ref{T:M-model-structure}, let $Y$ be a complex with terms in $\B$ and let $Z$ be cofibrant in the model structure induced by $M$. Then, there is a natural isomorphism
\[\Hom_{\K(R)}(Z, Y)\cong \Hom_{\D(R)}(Z, Y).\]

In particular this applies to the complexes $Z$ bounded above and with terms in $\A$.

\end{cor}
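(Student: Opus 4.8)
The plan is to read this off directly from Lemma~\ref{L:fibrant} together with the general principle, recorded just after Proposition~\ref{P:Hoevey}, that morphisms from a cofibrant object to a fibrant object are computed the same way in $\K(R)$ and in $\D(R)$. First I would note that the model structure of Theorem~\ref{T:M-model-structure} is one of those produced by Proposition~\ref{P:Hoevey}: the cotorsion pair $(\A,\B)$ is generated by a set (namely a set obtained from $M$ and its syzygies), its weak equivalences are the quasi-isomorphisms, and its homotopy category is $\D(R)$. Hence for any cofibrant $X$ and any fibrant $W$ in this structure there is an isomorphism $\Hom_{\K(R)}(X,W)\cong\Hom_{\D(R)}(X,W)$ induced by the localization functor $\K(R)\to\D(R)$; since that functor is a functor, the isomorphism is natural in both variables.

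Next I would apply Lemma~\ref{L:fibrant}: because $Y$ has all its terms in $\B$, it is a fibrant object in the model structure induced by $M$. Taking $X=Z$, which is cofibrant by hypothesis, and $W=Y$ in the displayed comparison yields precisely the asserted natural isomorphism $\Hom_{\K(R)}(Z,Y)\cong\Hom_{\D(R)}(Z,Y)$. For the final ``in particular'' clause I would invoke Remark~\ref{R:bounded-complex}, which says that a bounded-above complex with terms in $\A$ is cofibrant in the model structure induced by the hereditary cotorsion pair $(\A,\B)$, i.e.\ in the structure induced by $M$; such a $Z$ therefore meets the hypothesis on $Z$ and the isomorphism applies to it.

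There is essentially no obstacle: the substantive content has already been dealt with, the identification of the fibrant objects in Lemma~\ref{L:fibrant} and the cofibrant/fibrant machinery in Proposition~\ref{P:Hoevey}. The only point deserving a sentence of care is confirming that the model structure of Theorem~\ref{T:M-model-structure} genuinely falls under Proposition~\ref{P:Hoevey}, so that its homotopy category is $\D(R)$ and the cofibrant--fibrant $\Hom$ comparison is legitimate; this is immediate once one observes the cotorsion pair in question is generated by a set.
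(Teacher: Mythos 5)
Your argument is exactly the one the paper intends: the corollary is stated without a separate proof because it follows immediately from the cofibrant--fibrant $\Hom$ comparison noted after Proposition~\ref{P:Hoevey}, together with Lemma~\ref{L:fibrant} identifying the fibrant objects and Remark~\ref{R:bounded-complex} for the bounded-above case. Your additional check that the cotorsion pair generated by $M$ is generated by a set (so the homotopy category is indeed $\D(R)$) is correct and harmless.
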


If $C$ is a pure injective module, then by Auslander's result every cosyzygy of $C$ is pure injectve and thus, by \cite{ET00} the  hereditary cotorsion pair $(^\perp C, (^\perp C,)^\perp)$ cogenerated by $C$ is complete.

This observation allows to state also a dual of Theorem~\ref{T:M-model-structure}.
\begin{thm}\label{T:C-model-structure} Let $C$ be a pure injective $R$-module with i.d. $C\leq n$. Let $(\A, \B)$ be the hereditary cotorsion pair cogenerated by $C$. There is an abelian model structure on $\Ch (R)$ described as follows:
\begin{enumerate}
\item Cofibrations (trivial cofibrations) are the monomorphism $f$ such that $\Coker f$ has terms in $\A$ ($\Coker f\in \tilde{\A}$).
\item Fibrations (trivial vibrations) are the epimorphisms $g$ such that\\ $\Ext^1_{\Ch (R)}(X, \Ker g)=0$, for every $X\in \tilde{\A}$ ($\Ker g\in \tilde{\B}$).
\end{enumerate}
\end{thm}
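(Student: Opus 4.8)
The plan is to deduce Theorem~\ref{T:C-model-structure} as the honest dual of Theorem~\ref{T:M-model-structure}, making the dualization explicit rather than merely asserting it. First I would record the setup: since $C$ is pure injective with $\mathrm{i.d.}\, C\leq n$, Auslander's theorem gives that every cosyzygy of $C$ is again pure injective, so by \cite{ET00} the hereditary cotorsion pair $(\A,\B)=({}^\perp C,({}^\perp C)^\perp)$ is cogenerated by a class of pure injectives and is therefore complete and generated by a set of modules. This is exactly the hypothesis needed to invoke Proposition~\ref{P:Hoevey}, which immediately yields an abelian model structure on $\Ch(R)$ with quasi-isomorphisms as weak equivalences, and with the cofibrations, trivial cofibrations, fibrations and trivial fibrations described by the $\Ext^1_{\Ch(R)}$-orthogonality conditions against $\tilde\B$, $\tilde\A$, $\tilde\A$, $\tilde\B$ respectively. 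Thus, just as in the proof of Theorem~\ref{T:M-model-structure}, the only item not already supplied by Proposition~\ref{P:Hoevey} is the alternative description of one class of maps — here the \emph{cofibrations}, which I claim are precisely the monomorphisms $f$ with $\Coker f$ having all terms in $\A$.

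The key step is therefore the dual of Lemma~\ref{L:fibrant}: a complex $X\in\Ch(R)$ is cofibrant in the model structure cogenerated by $C$ if and only if all terms $X^i$ lie in $\A$; the statement about cofibrations then follows because $f$ is a cofibration iff $\Coker f$ is a cofibrant object (this being part of the definition of an abelian model structure, together with the fact that monomorphisms with cofibrant cokernel automatically have the required lifting property — which is the content of Proposition~\ref{P:Hoevey}(2)). To prove the cofibrant-objects claim I would run the argument of Lemma~\ref{L:fibrant} with all arrows reversed and the canonical \emph{projective} model structure of Example~\ref{E:1} playing the role that the canonical injective structure of Example~\ref{E:2} played there. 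Concretely: if $X$ is cofibrant then $\Ext^1_{\Ch(R)}(X,Y)=0$ for all $Y\in\tilde\B$; testing against the contractible complexes $D^n(B)$ for $B\in\B$ and using \cite[Lemma 3.1.5]{G3} (or its analogue $\Ext^1_{\Ch(R)}(D^n(B),X)\cong\Ext^1_R(X^n,B)$ obtained from the degreewise-split formula $(\ast)$) forces $X^n\in{}^\perp B$ for all $B\in\B$, hence $X^n\in{}^\perp\B\subseteq\A$ (using $\A={}^\perp\B$ since the pair is hereditary). Conversely, assume all $X^i\in\A$. Using completeness of the cotorsion pair $(\clKP,\N)$ from Example~\ref{E:1}, take a special precover $0\to N\to P_0\to X\to 0$ with $P_0$ $K$-projective and $N$ acyclic. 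Since $\A$ contains the projectives and is resolving (it is of the form ${}^\perp\B$, so closed under kernels of epimorphisms, extensions and summands), all terms of $P_0$ lie in $\A$ and hence so do all terms of $N$; dimension shifting along $0\to\Ker d_N^i\to N^i\to\Ker d_N^{i+1}\to 0$ together with $\mathrm{i.d.}\,C\le n$ (applied via $\Ext^j_R(\Ker d_N^i,C)=0$ for all $i,j$) gives $N\in\tilde\A$. Then for every $X'\in\tilde\B$ one gets $\Ext^1_{\Ch(R)}(N,X')\cong\Ext^1_{dw}(N,X')\cong\Hom_{\K(R)}(N[-1],X')=0$ because $N$ is acyclic and $X'$ has terms in $\B$, hence is fibrant by Lemma~\ref{L:fibrant}; feeding the triangle $N\to P_0\to X\to N[1]$ into $\Hom_{\K(R)}(-,X')$ and using that $P_0$ is $K$-projective shows $\Hom_{\K(R)}(X,X')=0$, i.e. $\Ext^1_{\Ch(R)}(X,X')=0$, so $X$ is cofibrant.

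I would then conclude by translating the object-level statement into the stated map-level description: a monomorphism $f$ is a cofibration precisely when $\Coker f$ is cofibrant, which by the above holds precisely when $\Coker f$ has terms in $\A$; the trivial cofibrations are those for which moreover $\Coker f\in\tilde\A$, and the (trivial) fibrations retain verbatim the description coming from Proposition~\ref{P:Hoevey}. The main obstacle I anticipate is purely bookkeeping: one must be careful that the orthogonality in the Hovey correspondence is $\Ext^1_{\Ch(R)}(\text{cofibrant},\tilde\B)=0$ — i.e. that it is the \emph{left} class $\A$ that describes cofibrant objects when we cogenerate by $C$ — and that the resolving/coresolving roles of $\A$ and $\B$ are exactly the mirror of the situation in Lemma~\ref{L:fibrant}; once the correct duality dictionary (projectives $\leftrightarrow$ injectives, $K$-projective $\leftrightarrow$ $K$-injective, precover $\leftrightarrow$ preenvelope, $\mathrm{p.d.}\leftrightarrow\mathrm{i.d.}$, syzygy $\leftrightarrow$ cosyzygy) is fixed, the proof is a line-by-line transcription of the proof of Lemma~\ref{L:fibrant} and no genuinely new idea is required.
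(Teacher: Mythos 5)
Your proposal is correct and follows the same route as the paper, which itself proves the theorem simply by dualizing Theorem~\ref{T:M-model-structure} and Lemma~\ref{L:fibrant} and, exactly as you do, by using special precovers with respect to the complete cotorsion pair $(\clKP,\N)$ of Example~\ref{E:1} to show that complexes with terms in $\A$ are cofibrant. The only blemish is your appeal to Lemma~\ref{L:fibrant} to justify $\Hom_{\K(R)}(N[-1],X')=0$ (that lemma concerns a different model structure); the vanishing you need is just the standard compatibility $\Ext^1_{\Ch(R)}(A,B)=0$ for $A\in\tilde{\A}$, $B\in\tilde{\B}$ of a hereditary cotorsion pair, which is the same step the paper itself uses without further comment in part (a) of the proof of Lemma~\ref{L:fibrant}.
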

\begin{proof} It is enough to dualize the proof of Theorem~\ref{T:M-model-structure}. To prove that every complex $Z $ with terms in $\A$ is cofibrant, one uses special precovers with respect to the complete cotorsion pair  $(\clKP, \N)$ in $\Ch(R)$ (described in Example~\ref{E:1}).
\end{proof}

\begin{cor}\label{C:hom-in-homotopy-dual} In the notations of Theorem~\ref{T:C-model-structure}, let $Z$ be a complex with terms in $\A$ and let $Y$ be fibrant in the model structure induced by $C$. Then, there is a natural isomorphism
\[\Hom_{\K(R)}(Z, Y)\cong \Hom_{\D(R)}(Z, Y).\]

In particular this applies to the complexes $Y$ bounded below and with terms in $\B$.

\end{cor}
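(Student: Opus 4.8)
The statement is the formal dual of Corollary~\ref{C:hom-in-homotopy}, and the plan is to prove it the same way: reduce to the observation following Proposition~\ref{P:Hoevey}, namely that in an abelian model structure on $\Ch(R)$ whose weak equivalences are the quasi-isomorphisms (so that $\Ho(\Ch(R))\simeq\D(R)$), any cofibrant source $Z$ and fibrant target $Y$ satisfy $\Hom_{\K(R)}(Z,Y)\cong\Hom_{\D(R)}(Z,Y)$. Thus the only things to verify are that $Z$ is cofibrant in the model structure induced by $C$, that $Y$ is fibrant --- which is a hypothesis --- and, to see why $\Hom_{\K(R)}$ (rather than merely $\Hom_{\Ch(R)}$ modulo some abstract homotopy) is the right group, that in this particular model structure the left homotopy relation between a cofibrant source and a fibrant target is ordinary chain homotopy.

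First I would invoke the proof of Theorem~\ref{T:C-model-structure}, where it is shown --- using special precovers with respect to the complete cotorsion pair $(\clKP,\N)$ of Example~\ref{E:1} --- that every complex with all terms in $\A$ is cofibrant; in particular our $Z$ is cofibrant. For the homotopy relation, I would exhibit the usual mapping cylinder $\mathrm{Cyl}(Z)$ of $\mathrm{id}_Z$ as a good cylinder object for $Z$: the canonical inclusion $Z\oplus Z\to\mathrm{Cyl}(Z)$ is a degreewise split monomorphism whose cokernel is a shift of $Z$, hence a complex with terms in $\A$, so by the description of cofibrations in Theorem~\ref{T:C-model-structure} it is a cofibration; and the fold map $\mathrm{Cyl}(Z)\to Z$ is a chain homotopy equivalence, hence a quasi-isomorphism, i.e.\ a weak equivalence. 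A left homotopy between $f,g\colon Z\to Y$ is then exactly a map $\mathrm{Cyl}(Z)\to Y$ extending $(f,g)$, i.e.\ exactly a chain homotopy; since $Z$ is cofibrant and $Y$ fibrant this relation is the one computing morphisms in $\Ho(\Ch(R))\simeq\D(R)$. This yields the natural isomorphism $\Hom_{\K(R)}(Z,Y)\cong\Hom_{\D(R)}(Z,Y)$.

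For the last sentence, a bounded below complex $Y$ with terms in $\B$ is fibrant in the model structure induced by $C$ by Remark~\ref{R:bounded-complex} applied to the hereditary cotorsion pair $(\A,\B)$, so the isomorphism applies to such $Y$. I do not expect a real obstacle: everything beyond citing Theorem~\ref{T:C-model-structure} and Remark~\ref{R:bounded-complex} is the routine verification that the standard cylinder is a good cylinder object, which is immediate from the explicit form of the cofibrations; the one point deserving a line of care is the naturality in both $Z$ and $Y$ of the identification of left-homotopy classes with chain-homotopy classes, which is the standard argument for complexes. The whole proof is obtained by dualizing the argument for Corollary~\ref{C:hom-in-homotopy}.
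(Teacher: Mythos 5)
Your proposal is correct and takes essentially the same route as the paper, which deduces the corollary immediately from Theorem~\ref{T:C-model-structure} (complexes with terms in $\A$ are cofibrant), the remark following Proposition~\ref{P:Hoevey} that cofibrant--fibrant hom-sets compute $\Hom_{\D(R)}$ as chain-homotopy classes, and Remark~\ref{R:bounded-complex} for the bounded below complexes with terms in $\B$. Your cylinder-object argument simply makes explicit the standard fact behind that remark, so there is no substantive difference.
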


\section{$t$-structures} ~\label{s:t-structure}
 \begin{defn}\label{d:BBD}(Beilinson, Bernstein, Deligne '82) A $t$-structure in a triangulated category $(\D, [-])$ is a pair
$(\U, \V)$ of subcategories such that: 
\begin{enumerate}
\item
$\U[1]\subseteq \U$;

\item $\V=\U^{\perp}[1]$ where $\U^{\perp}=\{Y\in \D\mid \Hom_{\D}(\U, Y)=0\}$;
\item
  for every object $D\in \D$ there is a triangle $U\to D\to Y\to$ with $U\in \U$ and $Y\in  \U^{\perp}$.
\end{enumerate}
\end{defn}
\begin{thm}\label{T:BBD} (\cite{BBD}) The heart $\clH=\U\cap \V$ of a $t$-structure $(\U, \V)$ is an abelian category.
\end{thm}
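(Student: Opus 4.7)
The plan is to follow the classical Beilinson--Bernstein--Deligne strategy: build truncation functors out of axiom (3), use them to define a cohomology functor $H^0\dd \D\to \clH$, and then exhibit kernels and cokernels of morphisms in $\clH$ as cohomologies of mapping cones. First, the hypothesis $\U[1]\subseteq \U$ implies $\U^{\perp}[-1]\subseteq \U^{\perp}$ (since $\Hom_{\D}(U,Y[-1])=\Hom_{\D}(U[1],Y)=0$), which combined with the vanishing $\Hom_{\D}(\U,\U^{\perp})=0$ forces the triangle $U\to D\to Y\to U[1]$ of axiom (3) to be unique up to a unique isomorphism. The assignments $D\mapsto U$ and $D\mapsto Y$ therefore lift to functors $\tau_{\U}\dd \D\to \U$ and $\tau_{\U^{\perp}}\dd \D\to \U^{\perp}$ that are right and left adjoint to the respective inclusions; a shift by $[1]$ converts the latter into the left adjoint $\tau_{\V}\dd \D\to \V$ of $\V\hookrightarrow \D$.

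Next I would define the cohomology functor $H^0\dd \D\to \clH$ as the composite $\tau_{\U}\circ \tau_{\V}$. To see that it lands in $\clH=\U\cap\V$ one uses that $\V$ is closed under extensions (a one-line consequence of the long exact $\Hom_{\D}(U,-)$ sequence and the definition $\V=\U^{\perp}[1]$), so that applying $\tau_{\U}$ to an object of $\V$ yields again an object of $\V$. The two natural candidates $\tau_{\U}\tau_{\V}$ and $\tau_{\V}\tau_{\U}$ are canonically isomorphic by an octahedral argument on the composition of the two truncation triangles. Setting $H^i(-):=H^0((-)[i])$, I would then check that $H^0$ is cohomological: every triangle $A\to B\to C\to A[1]$ in $\D$ produces a long exact sequence
\[\cdots\to H^{-1}(C)\to H^0(A)\to H^0(B)\to H^0(C)\to H^1(A)\to \cdots\]
in $\clH$, obtained by splicing the truncation triangles of $A$, $B$, $C$ through the octahedral axiom.

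With $H^0$ in hand, the abelian structure on $\clH$ is produced as follows. Given $f\dd X\to Y$ in $\clH$, embed it into a triangle $X\mapr{f} Y\to Z\to X[1]$ in $\D$. Since $H^{i}(X)=H^{i}(Y)=0$ for $i\neq 0$, while $H^{0}(X)=X$ and $H^{0}(Y)=Y$, the long exact sequence collapses to
\[0\to H^{-1}(Z)\to X\mapr{f} Y\to H^0(Z)\to 0\]
in $\clH$, from which I define $\Ker f:=H^{-1}(Z)$ and $\Coker f:=H^0(Z)$; their universal properties are inherited from the adjunctions for $\tau_{\U}$ and $\tau_{\V}$. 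The last and most delicate point is verifying that the canonical comparison $\operatorname{Coim}f\to \Img f$ is an isomorphism: one realises both terms as $H^0$ of triangles arising from a factorisation of $f$ through an intermediate mapping cone and identifies them via a further octahedron. This coimage--image comparison, together with the well-definedness of $H^0$, is the main technical obstacle; all of it ultimately reduces to systematic use of the octahedral axiom combined with the basic vanishing $\Hom_{\D}(\U,\U^{\perp})=0$ and the closure properties $\U[1]\subseteq \U$, $\U^{\perp}[-1]\subseteq \U^{\perp}$.
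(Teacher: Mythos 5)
The paper does not itself prove this statement: Theorem~\ref{T:BBD} is quoted directly from Beilinson--Bernstein--Deligne \cite{BBD} (Th\'eor\`eme 1.3.6 there), so there is no in-paper argument to compare against. Your sketch reproduces the classical BBD route --- build truncation adjoints from axiom (3), define $H^{0}=\tau_{\U}\tau_{\V}$, and realise kernels and cokernels of a morphism $f$ in $\clH$ as $H^{-1}$ and $H^{0}$ of its cone --- and the intermediate claims you state are correct: the implication $\U[1]\subseteq\U\Rightarrow\U^{\perp}[-1]\subseteq\U^{\perp}$, uniqueness of the truncation triangle, the adjunction properties of $\tau_{\U}$, $\tau_{\U^{\perp}}$ and $\tau_{\V}$, closure of $\V$ under extensions, and the vanishing of $H^{i}$ on $\clH$ for $i\neq 0$ all go through exactly as you indicate.

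There is, however, a logical ordering you would need to repair in a full write-up. You propose to ``check that $H^{0}$ is cohomological'' and then read off the sequence $0\to H^{-1}(Z)\to X\to Y\to H^{0}(Z)\to 0$ from the long exact sequence; but the very notion of a cohomological functor presupposes that its target is abelian, and abelianness of $\clH$ is precisely what is being proved, so as stated this step is circular. In \cite{BBD} the order is the reverse: one verifies directly, from the truncation triangles and the $\tau$-adjunctions, that $H^{-1}(Z)\to X$ and $Y\to H^{0}(Z)$ satisfy the kernel and cokernel universal properties in $\clH$, then checks $\operatorname{Coim}f\overset{\sim}{\to}\Img f$ via a further octahedron, and only afterwards deduces that $H^{0}$ is cohomological (1.3.6(ii)). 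You do remark that the ``universal properties are inherited from the adjunctions,'' so you have the right tool in hand; you just cannot invoke exactness of a cohomology long exact sequence before abelianness is established. Finally, the coimage--image comparison, which you correctly single out as the real technical content, is acknowledged but not carried out, so the sketch stops short of the crux of the theorem.
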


\begin{nota}\label{N:notation}
Let $T$ be an $n$-tilting mode. We denote by $\U$ and $\V$ the following full subcategories of $\D(R)$:
\begin{enumerate}
\item $\U=\{X\in \D(R)\mid \Hom_{\D(R)}(T[i], X)=0,\textrm{for\ all\ } i<0\}$.
\item $\V=\{Y\in \D(R)\mid \Hom_{\D(R)}(T[i], Y)=0, \textrm{for\ all\ }i>0\}$
\end{enumerate}
\end{nota}
Our aim is to prove that the pair $(\U, \V)$ in Notation~\ref{N:notation} is a $t$-structure in $\D(R)$.
 Following the pattern of the proof of \cite[Lemma 4.4]{Sto14} we can give a description of the complexes in $\U$.
\begin{lem}\label{L:U} Let $T$ be an $n$-tilting module, $T^\perp$ the corresponding tilting class and $\U$ as in Notation~\ref{N:notation}. For a complex $X\in \D(R)$ the following are equivalent:
\begin{enumerate}
\item
 $X\in \U$.
\item $X$ is isomorphic in $\D(R)$ to a complex of the from
 \[\dots \to X^{-n}\to X^{-n+1}\to\dots\to X^{-1}\to X^0\to 0\to 0\dots,\]
 with $X^{-i}\in T^\perp$ for every $i\geq 0$.
\item  $X$ is isomorphic in $\D(R)$ to a complex as in (2) with   $X^{-i}\in \Add T$, for every $i\geq 0$.

  \end{enumerate}
  \end{lem}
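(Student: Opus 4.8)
The plan is to prove the cycle of implications $(3)\Rightarrow(2)\Rightarrow(1)\Rightarrow(3)$, using the model structure of Theorem~\ref{T:M-model-structure} attached to $M=T$ (so that $\B=T^\perp$ and $\A$ is the class of modules admitting a finite $\Add T$-resolution) to make the Hom-computations in $\D(R)$ tractable. The implication $(3)\Rightarrow(2)$ is immediate since $\Add T\subseteq T^\perp$ by (T2). For $(2)\Rightarrow(1)$, suppose $X$ is a complex concentrated in nonpositive degrees with all terms in $T^\perp=\B$; by Lemma~\ref{L:fibrant} such an $X$ is a fibrant object. Now $T[i]$ for $i<0$ is represented by a complex concentrated in degree $-i>0$ with a projective (hence cofibrant) term, or more conveniently, using condition (T1) and Remark~\ref{R:bounded-complex}, by a bounded-above complex with terms in $\A$, shifted into strictly positive degrees. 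Then by Corollary~\ref{C:hom-in-homotopy}, $\Hom_{\D(R)}(T[i],X)\cong\Hom_{\K(R)}(T[i],X)$, and the latter vanishes for $i<0$ for the trivial degree reason that a chain map from a complex concentrated in strictly positive degrees to a complex concentrated in nonpositive degrees is zero, and any homotopy is likewise forced to be zero. (One must be slightly careful: if we use a projective resolution $P_\bullet\to T$ of length $\le n$ placed in degrees $-n,\dots,0$ and then shift by $[i]$, the resulting complex sits in degrees $-n-i,\dots,-i$; for $i<0$ this is degrees $-n-i,\dots,-i$ with $-i>0$ but $-n-i$ may be $\le 0$. The correct statement is rather that $T[i]$ as an object of $\D(R)$ has cohomology concentrated in degree $-i>0$, and one should instead compute via a fibrant replacement or via the standard truncation argument; the cleanest route is to use the K-projective resolution and the vanishing of $\Hom_{\K(R)}$ between suitably truncated complexes, invoking acyclicity to shift the resolution.)

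The substantive implication is $(1)\Rightarrow(3)$. Here I would start from an arbitrary $X\in\U$ and build, by hand, a quasi-isomorphic replacement of the desired shape. The idea, following the pattern of \cite[Lemma 4.4]{Sto14}, is: since $\Add T$ is the kernel $\A\cap\B$ of the tilting cotorsion pair and $T$ is a generator of $\Gen T = T^{\perp_{>0}}\cap\dots$ in the appropriate sense, one can inductively construct a complex $X'$ with $X'^{-i}\in\Add T$ for $i\ge0$ and $X'^j=0$ for $j>0$, together with a quasi-isomorphism $X'\to X$. Concretely: first replace $X$ by a K-projective resolution; the condition $\Hom_{\D(R)}(T[i],X)=0$ for all $i<0$ translates (via adjunction and the fact that $\R\Hom(T,-)$ detects this) into vanishing of certain cohomologies of $\R\Hom_R(T,X)$ in negative degrees, or more precisely into the statement that $X$ lies in the aisle generated by $\Add T$. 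Then I would use the completeness of the cotorsion pair $(\A,\B)$ — equivalently the existence of the cofibrant replacements in the model structure of Theorem~\ref{T:M-model-structure}, whose cofibrant objects are built from $\A$ — to produce a cofibrant replacement $\widetilde X\to X$ with $\widetilde X$ a complex with terms in $\A$; combining with the support condition from $(1)$, a truncation argument then yields that $\widetilde X$ may be taken concentrated in degrees $\le 0$. Finally, to upgrade from terms in $\A$ to terms in $\Add T$: since every module in $\A$ has a finite $\Add T$-resolution and $\Add T=\A\cap\B$, one splices these resolutions together (a standard "horseshoe"/totalization over the complex) and checks that the quasi-isomorphism type is preserved, using that the $\Add T$-resolution dimension is bounded by $n$ so the process terminates.

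The main obstacle I anticipate is precisely this last totalization step in $(1)\Rightarrow(3)$: replacing a complex with terms in $\A$ by a quasi-isomorphic complex with terms in $\Add T$, while keeping it concentrated in nonpositive degrees and not introducing terms in positive degrees. One has to manage the bookkeeping of the double complex whose columns are the finite $\Add T$-resolutions of the $X^{-i}$, take its total complex, and verify both that the augmentation is a quasi-isomorphism (this uses that each column resolution is exact, so one spectral-sequence or iterated-cone argument collapses) and that the resulting complex has the stated degree support (this uses the uniform length bound $n$ from (T1), together with the fact that the original complex is bounded above and in $\U$ so a hard truncation in degrees $\le 0$ is harmless). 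A secondary subtlety, already flagged above, is the honest identification of $T[i]$ with a concrete cofibrant or degreewise-nice complex when computing the Hom-groups in $(2)\Rightarrow(1)$; the safe approach is to fix once and for all a projective resolution $P_\bullet$ of $T$ of length $\le n$, note $T\cong P_\bullet$ in $\D(R)$, and do all the vanishing arguments with $P_\bullet[i]$ and a fibrant $X$, invoking Corollary~\ref{C:hom-in-homotopy}.
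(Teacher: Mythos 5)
Your reductions $(3)\Rightarrow(2)\Rightarrow(1)$ are essentially the paper's, but note that the clean way to do $(2)\Rightarrow(1)$ is the one you only half-commit to: the stalk complex $T[i]$ is itself cofibrant in the model structure of Theorem~\ref{T:M-model-structure}, because $T\in\Add T=\A\cap\B\subseteq\A$ and Remark~\ref{R:bounded-complex} applies to bounded above complexes with terms in $\A$; then Corollary~\ref{C:hom-in-homotopy} gives $\Hom_{\D(R)}(T[i],X)\cong\Hom_{\K(R)}(T[i],X)$, which vanishes for $i<0$ for genuine degree reasons. Your proposed ``safe approach'' of replacing $T$ by a projective resolution $P_\bullet$ forfeits exactly that trivial degree argument (as you yourself observed), so it should simply be dropped, not adopted.

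The genuine gap is in $(1)\Rightarrow(3)$, in the final ``upgrade from terms in $\A$ to terms in $\Add T$'' by splicing termwise finite $\Add T$-resolutions and totalizing. First, modules in $\A$ do not in general admit $\Add T$-\emph{resolutions} $\dots\to T_1\to T_0\to A\to 0$: already $A=R$ fails for $R=\mathbb{Z}$ and $T=\mathbb{Q}\oplus\mathbb{Q}/\mathbb{Z}$, since every image of a direct sum of copies of $T$ is divisible. What modules of $\A$ do admit is a finite $\Add T$-\emph{coresolution} $0\to A\to T^0\to\dots\to T^{n}\to 0$; but splicing coresolutions spreads the total complex into degrees up to $+n$, destroying precisely the shape required in (2)/(3), and you cannot repair this by truncation (brutal truncation changes the cohomology, soft truncation reinserts a degree-zero term lying only in $T^\perp$, not in $\Add T$). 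Second, even where relative resolutions are used, $\Add T$-precovers are not epimorphisms (the trace of $T$ need not be everything), so the columns of your double complex are not exact and no column-collapse or spectral-sequence argument shows the augmentation is a quasi-isomorphism. The paper's proof of $(1)\Rightarrow(3)$ circumvents both points: it takes a \emph{fibrant} replacement $X$ with terms in $T^\perp$, builds $Z$ with terms in $\Add T$ degree by degree using $\Add T$-precovers of $\Ker d^0$ and of the pullbacks of $\Ker\delta^{-i}\to X^{-i}\leftarrow X^{-i-1}$, and then proves $f\colon Z\to X$ is an isomorphism in $\D(R)$ not by any exactness, but by checking that $\Hom_{\K(R)}(T[i],\cone f)\cong\Hom_{\D(R)}(T[i],\cone f)$ vanishes for all $i$ (Corollary~\ref{C:hom-in-homotopy}, fibrancy of the cone) and invoking that $T$ generates $\D(R)$, which comes from (T3). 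Without this orthogonality-plus-generation step your argument does not close, so the key implication remains unproved as written.
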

 \begin{proof}  (3) $\Rightarrow $ (2) is obvious. (2) $\Rightarrow $ (1) follows by Corollary~\ref{C:hom-in-homotopy}, since it is obvious that $\Hom_{\K(R)}(T[i], X)=0$ for every $i<0$.
  
  It remains to show that (1) $\Rightarrow $ (3) Let $X\in \D(R)$. By Theorem~\ref{T:M-model-structure}, we can assume that $X$ has terms in $T^\perp$. By induction we construct a complex 
   \[Z=\dots \to Z^{-n}\to Z^{-n+1}\to\dots\to Z^{-1}\to Z^0\to 0\to 0\dots,
   \] 
 with terms $Z^i\in \Add T$ and a cochain map $f\colon Z\to X$ which becomes an isomorphism in $\D(R)$.
 
 Let $d^{-i}\colon X^{-i}\to X^{-i+1}$ be the $i^{th}$-differential of $X$. Consider an $\Add T$-precover of $\Ker d^0$, like for instance the canonical morphism \[Z^0=T^{(\Hom_R(T, \Ker d^0))}\overset{\phi}\to \Ker d^0\] and let $f^0$ be the composition of $\phi$ with the inclusion $\Ker d^0\to X^0$. By induction construct $f^{-i-1}\colon Z^{-i-1}\to X^{-i-1}$ in the following way. Having defined $f^{-i}$ and $\delta^{-i}\colon Z^{-i}\to Z^{-i+1}$,  let $K^{-i}$ be the kernel of $\delta^{-i}$ and let $g^{-i}$ be the  composition $K^{-i}\to Z^{-i}\overset{f^{-i}}\to X^{-i}$.
 Consider the pullback $P^{-i-1}$ of  the maps $g^{-i}$ and $d^{-i-1}$ and let $Z^{-i-1}\to P^{-i-1}$ be an $\Add T$-precover of $P^{-i-1}$. Then let $f^{-i-1}\colon Z^{-i-1}\to X^{-i-1}$ be the obvious composition. Visually we have:
 
  \[\xymatrix{Z^{-i-1}\ar@/_2pc/[dd]_{f^{-i-1}}\ar[d]\ar[rr]^{\delta^{-i-1}}&&Z^{-i}\ar[dd]^{f^{-i}}\\
P^{-i-1}\ar[d]\ar[r]&K^{-i}\ar@{^(->}[ur]\ar[dr]^{g^{-i}}\\
X^{-i-1}\ar[rr]_{d^{-i-1}}&&X^{-i}}
\] 
We claim that the cochain map $f=(f^{-i})_{i}$ is an isomorphism in $\D(R)$. By Corollary~\ref{C:hom-in-homotopy}, $f$ induces a morphism 
\[\Hom_{\K(R)}(T[i], Z)\overset{\Hom_{\K(R)}(T[i], f)}\longrightarrow\Hom_{\K(R)}(T[i], X),\]
with $\Hom_{\K(R)}(T[i], f)=0$ for every $i<0$ by the assumption on $X$ and by the fact that $Z^j=0$ for every $j>0$. If $i\geq 0$, then $\Hom_{\K(R)}(T[i], f)=0$ by construction, since $Z^{-j}$ are $\Add T$-precovers, for every $j\geq 0$.

From the mapping cone: $Z\overset{f}\to X\to \cone f\to Z[1]$ we obtain \[\Hom_{\K(R)}(T[i], \cone f)\cong\Hom_{\D(R)}(T[i], \cone f)=0,\] for all $i\in \bbZ$, since $\cone f$ is fibrant. Thus we conclude that $\cone f=0$, since $T$ is a generator of $\D(R)$ which yields that $f$ is a quasi-isomorphism.
\end{proof}

\begin{thm}\label{T:n-tilting-t-structure} The pair $(\U, V)$ defined in Notation~\ref{N:notation} is a $t$-structure called the $t$-structure induced by $T$ and its heart $\clH$ is given by

\[\clH=\{Y\in \D(R)\mid \Hom_{\D(R)}(T[i], Y)=0 \textrm {\ for\ all\ } i\neq 0\}.\]

\end{thm}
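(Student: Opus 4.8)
The plan is to verify the three conditions of Definition~\ref{d:BBD} for the pair $(\U,\V)$; the formula for $\clH=\U\cap\V$ is then immediate, since by Notation~\ref{N:notation} an object $Y$ lies in $\U\cap\V$ exactly when $\Hom_{\D(R)}(T[i],Y)=0$ for all $i<0$ and all $i>0$. I work throughout in the abelian model structure of Theorem~\ref{T:M-model-structure} attached to $M=T$, so that $\B=T^\perp$; recall that a complex is fibrant precisely when all its terms lie in $T^\perp$ (Lemma~\ref{L:fibrant}), that every bounded above complex with terms in $\A$ is cofibrant (Remark~\ref{R:bounded-complex}), and that $\Hom_{\K(R)}$ and $\Hom_{\D(R)}$ agree on every pair $(Z,Y)$ with $Z$ cofibrant and $Y$ having all terms in $T^\perp$ (Corollary~\ref{C:hom-in-homotopy}). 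Since $\Add T=\A\cap T^\perp$, the complexes produced by Lemma~\ref{L:U}(3) are both cofibrant and fibrant, and each $T[j]$ with $j\ge 0$ is cofibrant.

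Condition (1) is clear: $\Hom_{\D(R)}(T[i],X[1])\cong\Hom_{\D(R)}(T[i-1],X)$ vanishes for $i-1<0$ whenever $X\in\U$, so $\U[1]\subseteq\U$. For condition (2) it suffices to establish
\[\U^{\perp}=\clS:=\{Y\in\D(R)\mid\Hom_{\D(R)}(T[i],Y)=0\ {\rm for\ all}\ i\ge 0\};\]
indeed, by the analogous shift identity $\U^{\perp}[1]=\{Z\mid\Hom_{\D(R)}(T[j],Z)=0\ {\rm for\ all}\ j\ge 1\}=\V$. The inclusion $\U^{\perp}\subseteq\clS$ holds because $T[i]\in\U$ for every $i\ge 0$: indeed $\Hom_{\D(R)}(T[\ell],T[i])\cong\Ext^{i-\ell}_R(T,T)=0$ for $\ell<0$ by (T2). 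For the converse, fix $Y\in\clS$ and $X\in\U$; by Lemma~\ref{L:U}(3) we may assume $X$ is a complex concentrated in degrees $\le 0$ with $X^{-k}\in\Add T$ for all $k\ge 0$. For $j\ge 0$ let $X_{j}\subseteq X$ be the subcomplex obtained by replacing the terms in degrees $<-j$ by $0$; then $X_{0}\subseteq X_{1}\subseteq\cdots$, $\varinjlim_{j}X_{j}=X$, and there are triangles $X_{j-1}\to X_{j}\to X^{-j}[j]\to X_{j-1}[1]$. Since $X^{-k}\in\Add T$ and $Y\in\clS$, for $k\ge 0$ and $m\le 0$ one has $\Hom_{\D(R)}(X^{-k}[k],Y[m])\cong\Hom_{\D(R)}(X^{-k}[k-m],Y)=0$, this being a direct summand of $\prod_{\lambda}\Hom_{\D(R)}(T[k-m],Y)=0$; an induction on $j$ along the triangles above then gives $\Hom_{\D(R)}(X_{j},Y[m])=0$ for all $j\ge 0$ and $m\le 0$. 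As filtered colimits of complexes compute homotopy colimits in $\D(R)$, there is a triangle $\coprod_{j}X_{j}\to\coprod_{j}X_{j}\to X\to(\coprod_{j}X_{j})[1]$, and since $\Hom_{\D(R)}(\coprod_{j}X_{j},Y[m])=\prod_{j}\Hom_{\D(R)}(X_{j},Y[m])=0$ for $m\le 0$, this forces $\Hom_{\D(R)}(X,Y)=0$. Hence $Y\in\U^{\perp}$, which proves $\U^{\perp}=\clS$ and condition (2).

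For condition (3), fix $D\in\D(R)$; replacing $D$ by a fibrant replacement we may assume all its terms lie in $T^\perp$. Running the inductive construction in the proof of Lemma~\ref{L:U} verbatim — it uses only $\Ker d^0_D$ and the part of $D$ in non-positive degrees, together with pullbacks and canonical $\Add T$-precovers — we obtain a complex $Z$ concentrated in degrees $\le 0$ with terms in $\Add T$ and a chain map $f\colon Z\to D$. Then $Z\in\U$ by Lemma~\ref{L:U}(2), and $\cone f$ is fibrant, as its terms $Z^{n+1}\oplus D^{n}$ lie in $T^\perp$; hence $\Hom_{\D(R)}(T[j],\cone f)\cong\Hom_{\K(R)}(T[j],\cone f)$ for $j\ge 0$. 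The pullback/precover bookkeeping, identical to that in the proof of Lemma~\ref{L:U}, shows that $\Hom_{\K(R)}(T[j],f)\colon\Hom_{\K(R)}(T[j],Z)\to\Hom_{\K(R)}(T[j],D)$ is surjective for every $j\ge 0$ and injective for every $j\ge -1$ (vacuously so at $j=-1$, since $\Hom_{\K(R)}(T[-1],Z)=0$). Feeding this into the long exact sequence attached to the triangle $Z\to D\to\cone f\to Z[1]$ gives $\Hom_{\K(R)}(T[j],\cone f)=0$, hence $\Hom_{\D(R)}(T[j],\cone f)=0$, for all $j\ge 0$; that is, $\cone f\in\clS=\U^{\perp}$. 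Thus $Z\to D\to\cone f\to Z[1]$ is the triangle demanded by condition (3), with $Z\in\U$ and $\cone f\in\U^{\perp}$, so $(\U,\V)$ is a $t$-structure and its heart $\clH=\U\cap\V$ (abelian by Theorem~\ref{T:BBD}) is as described.

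The main obstacle is the step in condition (3) showing $\cone f\in\U^{\perp}$: one must re-run the diagram chase of Lemma~\ref{L:U} to see that the $\Add T$-precovers built into $Z$ make $\Hom_{\K(R)}(T[j],f)$ an isomorphism in precisely the range $j\ge 0$. The difference with Lemma~\ref{L:U} is that, as $D$ need not lie in $\U$, one cannot conclude $\cone f=0$; instead one obtains — and only needs — the vanishing of $\Hom_{\D(R)}(T[j],\cone f)$ for $j\ge 0$, which by condition (2) is exactly the statement $\cone f\in\U^{\perp}$. A secondary point requiring care is the reduction to bounded complexes in condition (2): one must know that an unbounded-below cochain complex with terms in $\Add T$ is the homotopy colimit of its bounded brutal truncations, so that $\Hom$-vanishing may be checked on those pieces.
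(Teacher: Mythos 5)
Your argument is correct, but it takes a genuinely different route from the paper's. The paper disposes of the theorem in a few lines: from the description of the objects of $\U$ in Lemma~\ref{L:U} it deduces that $\U$ is a cocomplete pre-aisle, indeed the smallest one containing $T$, and then quotes \cite[Lemma 3.1, Proposition 3.2]{AJS2} to conclude that $(\U,\U^{\perp}[1])$ is a $t$-structure with $\U^{\perp}[1]=\V$, the description of the heart being read off at the end. You instead verify the axioms of Definition~\ref{d:BBD} directly: you identify $\U^{\perp}$ with $\{Y\mid \Hom_{\D(R)}(T[i],Y)=0 \textrm{\ for\ all\ } i\geq 0\}$ by writing an object of $\U$ (in the form given by Lemma~\ref{L:U}(3)) as the $\bbN$-indexed union of its brutal truncations and using the associated Milnor triangle (this, rather than a general assertion that filtered colimits compute homotopy colimits, is what your argument actually uses, and it is the right tool), and, more substantially, you produce the approximation triangle of axiom (3) explicitly, by re-running the $\Add T$-precover/pullback tower from the proof of Lemma~\ref{L:U} on a fibrant replacement of an arbitrary $D$ and checking that the precover choices make $\Hom_{\K(R)}(T[j],f)$ surjective for $j\geq 0$ and injective for $j\geq -1$, whence $\cone f\in\U^{\perp}$; I checked this bookkeeping (both the surjectivity via the pullback $P^{-j}$ and the injectivity via a null-homotopy factored through $P^{-j-1}$) and it is sound. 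The trade-off: the paper's proof is shorter but rests on the external aisle-generation machinery of \cite{AJS2}, whereas yours is self-contained within the model-structure toolkit already set up in Sections~\ref{s:model} and~\ref{s:t-structure}, yields an explicit construction of the aisle truncation of any complex, and makes the equality $\U^{\perp}[1]=\V$, which the paper uses without spelling it out, completely explicit.
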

\begin{proof} From the description of the objects in the subcategory $\U$ it follows that $\U$ is a {\sl pre-aisle}, that is, if $X\in \U$ then also $X[1]\in \U$ and for every triangle $X \to Y\to Z\to X[1]$ in $\D(R)$, if $X, Z\in \U$ also $Y\in U$.

Then $\U$ is the smallest cocomplete subcategory of $\D(R)$ containing $T$. By \cite[Lemma 3.1, Proposition 3.2]{AJS2} $(\U, \U^\perp[1])$ is a $t$-structure and $\U^\perp[1]=\V$. The description of the heart is now obvious.

\end{proof}

\begin{rem}\label{R:n=1} If $T$ is a $1$-tilting module, and $\F=\{N_R\mid \Hom_R(T, N)=0\}$, then $(T^\perp, \F)$ is a torsion pair in $\Modr R$ and the $t$-structure induced by $T$ defined in Theorem~\ref{T:n-tilting-t-structure}  coincides with the $t$-structure induced by the torsion pair $(T^\perp, \F)$ as defined in \cite{HRS}, that is \[\U=\{X\in \D(R)\mid H^0(X)\in T^\perp \textrm{\ and\ } H^i(X)=0\,  
\textrm{\ for\ all\ } i >0\};\] \[\V=\{Y\in \D(R)\mid H^{-1}(X)\in \F \textrm{\ and\ }H^i(Y)=0 \textrm{\ for\ all\ } i<-1,\] so that the objects of  $\clH$ are isomorphic to complexes of the form $0\to X^{-1}\overset{d^{-1}}\to X^0\to 0$  with $\Ker d^{-1}\in \F$ and $\Coker d^{-1}\in T^\perp$. 
\end{rem}%
\section{The heart of the $t$-structure induced by an $n$-tilting module}~\label{s:the heart}

In this section $\clH$ will always denote the heart of the $t$-structure induced by an $n$-tilting module $T$.%

It is well known that $\clH$ satisfies the following properties
\begin{enumerate}
\item If $X, Z\in \clH$ and $X\to Y\to Z\to X[1]$ is a triangle in $\D(R)$, then $Y\in \clH$
\item A sequence $0\to X\to Y\to Z\to 0$ is exact in $\clH$ if and only if $X\to Y\to Z\to X[1]$ is a triangle in $\D(R)$.
\item For every $X, Y\in \clH$, $\Ext^1_{\clH}(X, Y)\cong \Hom_{\D(R)}(X, Y[1])$.
\end{enumerate}

\begin{rem}\label{R:Saorin}(\cite[Lemma 3.1]{PS15}) The inclusion functor $\iota\colon \clH \to \U$ admits a left adjoint which can be defined using the cohomological functor $\tilde{H}\colon\D(R)\to \clH$ constructed in \cite{BBD}. Given $X\in\U$, let \[U\to X[-1]\to Z\to U[1]\] be a triangle in $\D(R)$ with $U\in \U$ and $Z\in \U^\perp$. Then, a left adjoint $b\colon \U\to \clH$ is defined by letting $b(X)=Z[1]$.
\end{rem}

When applicable, we will make use of Corollary~\ref{C:hom-in-homotopy} without explicitly mentioning it.

First of all we note the following
\begin{prop}\label{P:-n-0-homology} Let $T$ be an $n$-tilting module. Then $T$ is a projective object of the heart $\clH$ and for every complex $X$ in $\clH$, $H^i(X)=0$ for every $i>0$ and $i<-n$.
\end{prop}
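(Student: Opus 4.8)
The plan is to prove the two statements of Proposition~\ref{P:-n-0-homology} separately, using the concrete description of the objects of $\clH$ provided by Lemma~\ref{L:U} and the fact that $\V = \U^\perp[1]$.

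\emph{$T$ is projective in $\clH$.} First I would observe that $T$, viewed as a stalk complex concentrated in degree $0$, belongs to $\clH$: indeed $\Hom_{\D(R)}(T[i],T) = \Ext_R^{-i}(T,T)$ (up to reindexing) which vanishes for $i<0$ by (T2), for $i=1$ again by (T2), and for $i>1$ because the higher groups $\Ext^j_R(T,T)$ vanish by (T2) as well; so $T\in\U\cap\V=\clH$. For projectivity, by property (3) recalled at the start of this section it suffices to show $\Hom_{\D(R)}(T,Y[1])=0$ for all $Y\in\clH$; but $\Hom_{\D(R)}(T,Y[1])=\Hom_{\D(R)}(T[-1],Y)=0$ directly from the defining condition of $\clH$ (the case $i=-1\neq 0$). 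So $\Ext^1_{\clH}(T,-)=0$ on $\clH$, which is precisely projectivity of $T$ in the abelian category $\clH$. (That $T$ is moreover a generator of $\clH$ — even of $\D(R)$ — is the content of a later remark and is not needed here.)

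\emph{Vanishing of cohomology.} Let $X\in\clH\subseteq\U$. By Lemma~\ref{L:U}(2) (or (3)) we may replace $X$ by a quasi-isomorphic complex concentrated in nonpositive degrees with terms in $T^\perp$ (resp.\ in $\Add T$); hence immediately $H^i(X)=0$ for all $i>0$. For the lower bound I would exploit that $X\in\V=\U^\perp[1]$. The key input is that $T$ has projective dimension $\le n$, so a projective resolution $0\to P_n\to\cdots\to P_0\to T\to 0$ gives, for any complex $W$, that $\Hom_{\D(R)}(T[i],W)$ is computed by this bounded resolution; in particular if $W$ is a complex concentrated in degrees $\ge -m$ then $\Hom_{\D(R)}(T[i],W)=0$ whenever $i < -m-n$... wait, I need the right direction. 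The clean way: use that $R$ admits a finite coresolution by modules in $\Add T$ (condition (T3)), i.e. $R$ is quasi-isomorphic to a complex $T^\bullet$ concentrated in degrees $0,\dots,r$ with terms in $\Add T$. Then for $X\in\clH$ with terms in degrees $\le 0$ and in $T^\perp$, and any integer $k$, one has $H^{k}(X)\cong\Hom_{\D(R)}(R[-k],X)\cong\Hom_{\D(R)}(T^\bullet[-k],X)$, and the latter is computed by a complex built from the groups $\Hom_{\D(R)}(T[i],X)$ with $i$ in a range of length $r$ around $-k$. Since $X\in\clH$, $\Hom_{\D(R)}(T[i],X)=0$ for all $i\neq 0$, so this forces $H^k(X)=0$ unless $0$ lies in the relevant window, i.e. roughly $-r\le k\le 0$. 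Combined with the projective-dimension bound $\mathrm{p.d.}\,T\le n$ (which controls the length more tightly, giving $k\ge -n$) this yields $H^i(X)=0$ for $i<-n$.

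The main obstacle I anticipate is bookkeeping the indices correctly in the spectral-sequence / truncation argument that converts ``$\Hom_{\D(R)}(T[i],X)=0$ for $i\neq 0$'' into ``$H^i(X)=0$ for $i<-n$'': one must carefully combine (a) the resolution of $R$ by $\Add T$ from (T3) — which is what lets one recover cohomology of $X$ from $\Hom(T[i],X)$ — with (b) the bound $\mathrm{p.d.}\,T\le n$ and the fact that $X$ is represented by a complex with terms in $T^\perp$ in nonpositive degrees. A cleaner route, and the one I would ultimately write up, is a direct dimension-shifting argument: take the representative $X$ with $X^{-i}\in\Add T$ for $i\ge 0$ from Lemma~\ref{L:U}(3); since $\mathrm{p.d.}\,T\le n$, the ``stupid'' truncation $\sigma^{\ge -n}X$ has the property that its kernel in degree $-n$ is a syzygy of modules in $\Add T$ and hence still has projective dimension $\le 0$ after $n$ steps — making $X$ itself quasi-isomorphic to a complex concentrated in degrees $[-n,0]$ once one checks that the brutal truncation below $-n$ splits off; verifying that splitting (equivalently, that $\Ext^{1}_R$ of the relevant cokernels into $T^\perp$ vanishes, which follows because $T^\perp$ is coresolving and contains enough to kill syzygies beyond the projective dimension) is the technical heart of the argument.
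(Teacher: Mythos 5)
Your proof of projectivity is correct and coincides with the paper's: $\Ext^1_{\clH}(T,X)\cong\Hom_{\D(R)}(T,X[1])=0$ for $X\in\clH$ by the description of the heart (and the observation that $T[0]\in\clH$ is immediate from (T2)). Your \emph{first} sketch of the cohomology bound is also essentially the paper's argument: write $R$, via (T3), as quasi-isomorphic to a bounded complex $T^\bullet$ with terms in $\Add T$, and compute $H^k(X)\cong\Hom_{\D(R)}(R[-k],X)$ from the groups $\Hom_{\D(R)}(T[i],X)$, which vanish for $i\neq 0$. But as you note, this only yields the window $[-r,0]$ where $r$ is the length of the (T3) coresolution; your parenthetical claim that ``p.d.\,$T\le n$ controls the length more tightly, giving $k\ge -n$'' is not an argument. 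One genuinely needs the separate fact that the coresolution in (T3) can be chosen with $r\le n$ (the paper cites \cite[Proposition 3.5]{B04}; it follows because the $n$-th cosyzygy of $R$ in the (T3) sequence lies in $\A\cap T^\perp=\Add T$, using p.d.\,$T\le n$ and that $\A$ is resolving). As written, that step is a gap, though a repairable one.

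The ``cleaner route'' you say you would actually write up does not work as sketched. The vanishing $H^i(X)=0$ for $i<-n$ is \emph{not} a consequence of having a representative in degrees $\le 0$ with terms in $\Add T$ together with p.d.\,$T\le n$: for $n=0$ and $T=R$ the complex $R\xrightarrow{\,0\,}R$ placed in degrees $-2,-1$ lies in $\U$, has terms in $\Add T$, and has nonzero cohomology below $-n$; it is excluded from $\clH$ only because $\Hom_{\D(R)}(T[i],-)$ does not vanish on it for $i>0$, i.e.\ because it fails membership in $\V$. Your truncation/splitting argument never invokes this half of the heart condition, so it cannot succeed; moreover the proposed justification of the splitting (``$\Ext^1_R$ of the relevant cokernels into $T^\perp$ vanishes because $T^\perp$ is coresolving and contains enough to kill syzygies beyond the projective dimension'') is not a proof, and even granted that the brutal truncation $\sigma^{\le -n-1}X$ split off in $\D(R)$, you would still have to show that this summand is acyclic (e.g.\ that all $\Hom_{\D(R)}(T[i],-)$ vanish on it and that $T$ generates $\D(R)$), which is where the actual content lies. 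The fix is to abandon this route and complete your first one by the $r\le n$ reduction, which is exactly how the paper proceeds.
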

\begin{proof} By property (3) above $\Ext^1_{\clH}(T, X)\cong \Hom_{\D(R)}(T, X[1])$, so $\Ext^1_{\clH}(T, X)=0$, by the description of the objects in $\clH$. Hence, $T$ is a projective object of $\clH$. By \cite[Proposition 3.5]{B04} we can chose a sequence $E$ as in (T3) of Definition~\ref{D:n-tilting} with $r=n$. Apply the functor$ \Hom_{\D(R)}(T[i], -)$ to the triangles in $\D(R)$ corresponding to the short exact sequences in which the exact sequence $E$ splits and consider  the long exact sequences in cohomology associated to the short exact sequences. For every $X\in \clH$ we get  $\Hom_{\D(R)}(R[i], X)\cong \Hom_{\K(R)}(R[i], X)=0$ for every $i\neq 0, 1, 2,\dots, n$. Hence $X$ has cohomology only in degrees $0, -1, -2, \dots, -n$.
\end{proof}

Let \[\clH_i=\{X\in \clH\mid H^{-j}(X)=0 \textrm {\ for\ every\ } j>i\}.\]
 Thus, $\clH_0=T^\perp[0]$, $\clH_i\subseteq \clH_{i+1}$ and by Proposition~\ref{P:-n-0-homology}, $\clH= \clH_n$.
 
 The next result is obtained by dualizing the proofs of \cite[Lemma 5.18, Proposition 5.20]{Sto14}.
\begin{prop}\label{P:resol-dimension}  For every $X\in \clH_i$ there is an exact sequence
\[0\to Y\to T_0[0]\to X\to 0\] in $\clH$ with $T_0\in \Add T$ and $Y\in \clH_{i-1}$. In  particular, $T$ is a projective generator of $\clH$, $\Add T$ is equivalent to the full subcategory of projective objects of $\clH$  and the $T^\perp$-resolution dimension of an object  in $ \clH$ is at most $n$.
\end{prop}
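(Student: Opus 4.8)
The plan is to construct the short exact sequence by taking $T_0$ to be a suitable direct sum of copies of $T$ surjecting onto the top cohomology of $X$, and then to control the kernel. Given $X \in \clH_i$, I would first observe that since $T$ is a projective object of $\clH$ (Proposition~\ref{P:-n-0-homology}), the natural candidate for a projective cover-type map is the evaluation morphism $T_0[0] = T^{(\Hom_{\D(R)}(T[0],X))}[0] \to X$ in $\D(R)$. The first task is to check that this map is an epimorphism in $\clH$; by property (2) of the heart listed at the start of Section~\ref{s:the heart}, this amounts to completing it to a triangle $Y \to T_0[0] \to X \to Y[1]$ and verifying $Y \in \clH$, i.e. $\Hom_{\D(R)}(T[j],Y)=0$ for all $j \neq 0$. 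Applying $\Hom_{\D(R)}(T[j],-)$ to the triangle and using that $T_0 \in \Add T$ so $\Hom_{\D(R)}(T[j],T_0[0])=0$ for $j\neq 0$ (since $T \in \clH$), together with the surjectivity of $\Hom_{\D(R)}(T[0],T_0[0]) \to \Hom_{\D(R)}(T[0],X)$ which holds by the very choice of $T_0$, pins down $Y \in \clH$.

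The second, and genuinely delicate, task is to show $Y \in \clH_{i-1}$, that is, that $H^{-i}(Y)=0$. Here I would pass to cohomology: working with the representatives from Lemma~\ref{L:U}, one has a long exact cohomology sequence $\cdots \to H^{-i}(T_0[0]) \to H^{-i}(X) \to H^{-i+1}(Y) \to H^{-i+1}(T_0[0]) \to \cdots$. Since $T_0[0]$ is concentrated in degree $0$ and $i \geq 1$, we get $H^{-i}(Y) \cong H^{-i}(X)$ shifted appropriately — wait, more carefully: the relevant piece is $H^{-i}(T_0[0]) \to H^{-i}(X) \to H^{-i+1}(Y)[\text{sic}]$, and since $H^{-i}(T_0[0])=0$ for $i\geq 1$ and $H^{-i-1}(T_0[0])=0$, the long exact sequence forces $H^{-j}(Y) \cong H^{-j}(X)$ for $j > 0$ except that the connecting map shifts degrees by one. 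The correct bookkeeping: from $T_0[0] \to X \to Y[1]$ one reads off $H^{-j}(Y[1]) = H^{-j+1}(Y)$, so $H^{-i-1}(X) \hookrightarrow H^{-i}(Y)$, but $H^{-i-1}(X)=0$ since $X\in\clH_i$, and on the other side $H^{-i}(Y) \to H^{-i+1}(T_0[0])=0$ once $i \geq 1$ provided the map $H^0(T_0[0])\to H^0(X)$ is surjective — which is exactly our choice, dualizing the argument of \cite[Lemma 5.18]{Sto14}. So $H^{-i}(Y)=0$, giving $Y\in\clH_{i-1}$.

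Granting the short exact sequence, the remaining assertions follow formally. Iterating, every $X\in\clH=\clH_n$ admits a resolution $0 \to Y_n \to T_{n-1}[0] \to \cdots \to T_0[0] \to X \to 0$ of length $n$ with $Y_n \in \clH_0 = T^\perp[0]$ and all $T_k \in \Add T$; combined with the fact that $\Add T \subseteq T^\perp[0]$ this shows the $T^\perp$-resolution dimension of $X$ is at most $n$. That $T$ is a projective generator is immediate: it is projective by Proposition~\ref{P:-n-0-homology}, and the degree-$n=0$ case of the construction gives an epimorphism from an object of $\Add T$ onto any $X$. Finally, that $\Add T$ is precisely the subcategory of projectives of $\clH$: any projective $P\in\clH$ is a summand of some $T_0[0]\in\Add T$ (split the epimorphism $T_0[0]\to P$), and conversely every object of $\Add T$ is projective in $\clH$ since $\Add T \subseteq \clH$ and $T$ is projective there, closure of projectives under summands and coproducts being automatic. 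The main obstacle is the degree-shift cohomology bookkeeping in the second task — getting the indices right so that the surjectivity built into the choice of $T_0$ is exactly what kills $H^{-i}(Y)$.
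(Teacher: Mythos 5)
Your construction works and the final statement follows from it, but the route is not the paper's. The paper first replaces $X$ by a complex with all terms in $\Add T$ (Lemma~\ref{L:U}(3)), maps in its degree-zero term $X^0[0]$, and identifies the cone explicitly with the brutal truncation concentrated in degrees $\leq -1$; the one non-formal vanishing, $\Hom_{\D(R)}(T,Z)=0$, is then a homotopy-category computation via Corollary~\ref{C:hom-in-homotopy}, and $Z[-1]\in\clH_{i-1}$ is read off from the explicit cone, so no surjectivity argument ever enters. You instead use the representative-free evaluation map $T^{(\Hom_{\D(R)}(T,X))}[0]\to X$ and run two long exact sequences; there the surjectivity of $\Hom_{\D(R)}(T,T_0[0])\to\Hom_{\D(R)}(T,X)$, built into the choice of $T_0$, is exactly what kills the single non-automatic group $\Hom_{\D(R)}(T[-1],Y)$, and property (2) of the heart then turns the triangle into the desired short exact sequence. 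What your approach buys is independence from Lemma~\ref{L:U}(3) (only (T2) and the heart axioms are used); what the paper's buys is an explicit complex representing the kernel, in the style of the dualized argument of \cite[Lemma 5.18]{Sto14}.

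A few corrections to your bookkeeping, none of which is fatal. First, $\Hom_{\D(R)}(T[j],T_0[0])=0$ for $j\neq 0$ is not justified by ``$T\in\clH$'': $T_0$ is an infinite direct sum and $\clH$ need not be closed under coproducts (Proposition~\ref{P:coproducts}); the correct reason is condition (T2), which gives $\Ext^{i}_R(T,T^{(\alpha)})=0$. Second, in the cohomology step the map out of $H^{-i}(Y)$ lands in $H^{-i}(T_0[0])$, not $H^{-i+1}(T_0[0])$, and this group vanishes for $i\geq 1$ unconditionally; no surjectivity of $H^0(T_0[0])\to H^0(X)$ is needed there (nor is that the map your choice of $T_0$ controls — it controls $\Hom_{\D(R)}(T,-)$, which is not $H^0$). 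The clean statement is that the segment $H^{-j-1}(X)\to H^{-j}(Y)\to H^{-j}(T_0[0])$ gives $H^{-j}(Y)=0$ for every $j\geq i$, which is what $Y\in\clH_{i-1}$ actually requires (not only $j=i$). Finally, as in the paper's own proof, the sequence statement must be read for $i\geq 1$: for $X\in\clH_0$ one would need $Y\in\clH_{-1}=0$, forcing $X\in\Add T$.
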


\begin{proof}
Let $X\in \clH_i$. By Lemma~\ref{L:U} we may assume that  \[X= \dots \to X^{-n}\to X^{-n+1}\to\dots\to X^{-1}\to X^0\to 0,\] with $X^i\in \Add T$. Consider the complex $X^0[0]$ and the obvious chain map  $f\colon X
^0[0]\to X$. We have a triangle in $\D(R)$
\[ X^0[0]\to X\to Z\to ,\]

where $Z$ is fibrant and we may assume that \[Z: \dots\to 0\to X^{-n}\overset{d^{-n}}\to X^{-n+1}\to \dots \to X^{-1} \to 0\to 0\]
with $X^{-i}$ in degrees $-i$. Applying the cohomological functor $\Hom_{D(R)}(T, -)$ to the triangle and using condition $(T2)$ of tilting modules, we obtain that  $\Hom_{D(R)}(T[i], Z)=0$, for every $i\neq 0, 1$. Moreover, $\Hom_{D(R)}(T[1], Z)\cong \Hom_{K(R)}(T[i], Z)$  (by Corollary~\ref{C:hom-in-homotopy}) and by the choice of $Z$ we have $\Hom_{K(R)}(T[0], Z)=0$. Thus $Z[-1]\in \clH$ and computingg the homologies of the terms in the triangle, we see that $Z[-1]\in \clH_{i-1}$. Thus, the triangle $Z[-1]\to X^0[0]\to X\to$ gives the wanted exact sequence $0\to Z[-1]\to X^0[0]\to X\to 0$ in $\clH$.
 The last statements are now obvious. \end{proof}

\begin{prop}\label{P:co-resolving} $\Add T$ and $T^\perp$ are functorially resolving subcategories of $\clH$.
\end{prop}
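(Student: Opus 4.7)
The plan is to verify, for each of the two subcategories, the four defining properties of a functorially resolving subcategory (closure under summands, closure under extensions in $\clH$, closure under kernels of epimorphisms taken in the subcategory, and functorial generation), leveraging Proposition~\ref{P:resol-dimension} which supplies $T$ as a projective generator of $\clH$ and identifies $\Add T$ with the projective objects of $\clH$.

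For $\Add T$, closure under summands is immediate from the definition. Since every object of $\Add T$ is projective in $\clH$, any short exact sequence $0\to T_1\to Y\to T_0\to 0$ in $\clH$ with $T_0,T_1\in \Add T$ splits, giving $Y\cong T_0\oplus T_1\in \Add T$; similarly the kernel in $\clH$ of an epimorphism $T_1\to T_0$ between objects of $\Add T$ is a summand of $T_1$, hence in $\Add T$. For the functorial generation, I would use the canonical evaluation map $\eta_X\colon T^{(\Hom_{\clH}(T,X))}\to X$, which is natural in $X$ and is an epimorphism in $\clH$ because $T$ is a projective generator.

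For $T^\perp$, I would identify it with $\clH_0 = T^\perp[0]\subseteq \clH$. The same $\eta_X$ provides functorial generation since $\Add T\subseteq T^\perp$, and closure under summands is clear. For closure under extensions, given an exact sequence $0\to X\to Y\to Z\to 0$ in $\clH$ with $X,Z\in \clH_0$, the corresponding triangle $X\to Y\to Z\to X[1]$ in $\D(R)$ yields a cohomology long exact sequence in which $H^i(X)=H^i(Z)=0$ for $i\neq 0$, forcing $H^i(Y)=0$ for $i\neq 0$, so $Y\in \clH_0$. For closure under kernels of epimorphisms, given an epi $f\colon Y\to X$ in $\clH$ with $X,Y\in \clH_0$ and kernel $K$ (in $\clH$), the triangle $K\to Y\to X\to K[1]$ gives $H^i(K)=0$ for $i\geq 1$ (since $K\in \clH$) and the cohomology sequence
\[\cdots \to H^{i-1}(X)\to H^i(K)\to H^i(Y)\to H^i(X)\to\cdots\]
combined with $H^j(X)=H^j(Y)=0$ for $j\neq 0$ forces $H^i(K)=0$ for every $i\neq 0$, so $K\in \clH_0=T^\perp[0]$.

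The only mildly subtle point is the last one: although $T^\perp$ is \emph{not} in general closed under kernels of epimorphisms taken inside $\Modr R$, the kernel is formed in $\clH$ (where it is the shift of the cone), and the cohomology long exact sequence yields exactly the vanishings needed to remain in $\clH_0$. No genuine obstacle arises; the task is simply to assemble the cohomological bookkeeping and to record that the canonical $\Add T$-precover is natural.
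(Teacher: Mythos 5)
Your proof is correct, and for $T^\perp$ it takes a genuinely different route from the paper's. To show closure under kernels of epimorphisms, the paper represents the kernel of an epimorphism $f$ in $\clH$ between objects of $T^\perp[0]$ by the two-term complex with those modules in degrees $0$ and $1$, observes by Lemma~\ref{L:fibrant} that it is fibrant, and then takes a special $\Add T$-precover $\pi\colon A_Z\to Z$ of the target; the vanishing $\Hom_{\D(R)}(A_Z[-1],-)=\Hom_{\K(R)}(A_Z[-1],-)=0$ on that complex supplies a lift of $\pi$ through $f$, exhibiting $f$ as a surjective module map and forcing the kernel to lie in $\clH\cap\Modr R=T^\perp[0]$. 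You instead argue purely cohomologically from the kernel triangle: Proposition~\ref{P:-n-0-homology} gives $H^i(K)=0$ for $i\geq 1$, and the long exact cohomology sequence kills $H^i(K)$ for $i<0$, so $K\in\clH_0=T^\perp[0]$. Both arguments are valid and rest on the same identification $\clH_0=T^\perp[0]$; yours is shorter and avoids constructing the lift, while the paper's is more concrete and makes the surjectivity of the underlying module map explicit. For $\Add T$ (splitting via projectivity of objects of $\Add T$ in $\clH$), for closure under summands and extensions, and for the functorial generator $T^{(\Hom_{\clH}(T,-))}\to -$, your argument and the paper's essentially coincide.
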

\begin{proof} $\Add T$ and $T^\perp$ are closed under summands and extensions and by Proposition~\ref{P:resol-dimension} they are generating subcategories in $\clH$.
We need to prove their closure under kernels of epimorphisms. Let $0\to X\to Y \to Z\to 0$ be an exact sequence in $\clH$ with $Y,Z\in T^\perp$ and consider the triangle $X\to Y\overset{f}\to Z\to X[1]$ in $\D(R)$. Then $X$ is quasi isomorphic to 
\[\dots 0\to0\to Y\to Z\to0\to o\dots\]
with $Y$ in degree $0$ and $Z$ in degree $1$, hence, by Lemma~\ref{L:fibrant} $X$ is a fibrant object. 
Let $0\to B_Z\to A_Z\overset{\pi}\to Z\to 0$ be a special $\A$-precover of $Z$ in the cotorsion pair $(\A, T^\perp)$ in $\Modr R$. Then $A_Z\in \A\cap T^\perp=\Add T$. By Theorem~\ref{T:n-tilting-t-structure} and Corollary~\ref{C:hom-in-homotopy}, $\Hom_{\D(R)}(A_Z[-1], X)=\Hom_{\K(R)}(A_Z[-1], X)=0$. Hence there is $g\colon A_Z\to Y$ such that $f\circ g=\pi$ showing that $f$ is an epimorphism in $\Modr R$, thus $X$ is isomorphic to an object in $\clH\cap \Modr R=T^\perp.$

In case $Y,Z$ are in $\Add T$ the previous argument shows that $\pi$ splits and so does $f$.

To prove the functoriality note that for every $X\in \clH$, we have a functorial epimorphism $T^{(\Hom_{\clH}(T, X))}\to X\to 0$.

\end{proof}

 Using the theory of derivators as explained in \cite[Section 5]{Sto14} the previous results yields the following:
 \begin{thm}\label{T:derivator-equiv} Let $\clH$ be the heart of the $t$-structure induced by an $n$-tilting $R$-module $T$.
  Then, the inclusion $\clH\subseteq \D(R)$ extends to a triangle equivalence
  \[F\colon \D(\clH)\to \D(R).\]
  \end{thm}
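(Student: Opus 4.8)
The plan is to follow {\v{S}}{\v{t}}ov{\'{\i}}{\v{c}}ek's strategy from \cite[Section~5]{Sto14}, adapted to the covariant (tilting rather than cotilting) situation, using the reformulation of derived equivalences in terms of derivators. First I would observe that the data assembled in the previous results is exactly what the derivator machinery requires: by Proposition~\ref{P:resol-dimension}, $T$ is a projective generator of $\clH$ and $\Add T$ is (equivalent to) the class of projectives of $\clH$; by Proposition~\ref{P:co-resolving}, $\Add T$ and $T^\perp$ are \emph{functorially} resolving subcategories of $\clH$, and every object of $\clH$ has $T^\perp$-resolution dimension at most $n$. So $\clH$ is an abelian category with a functorially resolving class of finite resolution dimension, playing the role that the coresolving class of ``Gorenstein-like'' objects plays in \cite{Sto14}.

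Next I would set up the comparison functor. The inclusion $\iota\colon\clH\hookrightarrow\D(R)$ lands, by Theorem~\ref{T:n-tilting-t-structure}, inside a triangulated category, and since $\clH$ is the heart of a $t$-structure it is in particular an exact (indeed abelian) subcategory; hence $\iota$ induces an exact functor $\Htp{}(\clH)\to\D(R)$ on homotopy categories of complexes, and I must check it sends acyclic complexes of $\clH$ to zero so that it descends to $F\colon\D(\clH)\to\D(R)$. This last point is the standard ``every short exact sequence in $\clH$ comes from a triangle in $\D(R)$'' fact recorded as property~(2) at the start of Section~\ref{s:the heart}, applied degreewise and then totalised. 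To upgrade $F$ to an equivalence I would argue, as in \cite{Sto14}, at the level of \emph{stable derivators}: the assignment sending a small category $I$ to the derived category of the functor category $\clH^I$ (respectively to $\D(R)^I$) defines stable derivators, and $F$ is the base component of a morphism of derivators; by Porta's/{\v{S}}{\v{t}}ov{\'{\i}}{\v{c}}ek's recognition criterion it suffices to check that $F$ is fully faithful on the ``stalk'' objects $X[0]$, $X\in\clH$, and that its essential image generates $\D(R)$ as a localising subcategory.

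For full faithfulness on stalks I would compute $\Hom_{\D(\clH)}(X[0],Y[j])=\Ext^j_{\clH}(X,Y)$ and compare with $\Hom_{\D(R)}(\iota X,\iota Y[j])$, using property~(3) of Section~\ref{s:the heart} for $j=0,1$ and, for higher $j$, resolving $X$ by a finite complex with terms in $\Add T$ (via Proposition~\ref{P:resol-dimension}) and invoking Corollary~\ref{C:hom-in-homotopy} together with the fact that $T$ is projective in $\clH$ to see that the $\Ext$-groups computed in $\clH$ agree with morphisms in $\D(R)$; the bounded resolution dimension $\le n$ guarantees these resolutions are finite. For generation, note that $T[0]\in\clH$ and $T$ is a generator of $\D(R)$ (used already in the proof of Lemma~\ref{L:U}), so the localising subcategory generated by the image of $F$ contains $T$ and hence is all of $\D(R)$. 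The main obstacle I anticipate is the derivator bookkeeping: verifying that the functorial resolutions supplied by Proposition~\ref{P:co-resolving} actually induce a \emph{morphism of derivators} (i.e.\ are compatible with restriction along all functors of small categories, not just with fixed-shape diagrams), which is where the functoriality — as opposed to mere existence — of the resolving class is essential, and where one must quote the relevant lemmas of \cite{Sto14} rather than reprove them. The finiteness of the resolution dimension, proved in Proposition~\ref{P:resol-dimension}, is what keeps everything within the bounded framework and makes the argument go through verbatim as the dual of \cite{Sto14}.
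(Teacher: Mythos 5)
Your proposal assembles the right ingredients (Propositions~\ref{P:resol-dimension} and \ref{P:co-resolving}, finite resolution dimension, \v{S}tov\'{\i}\v{c}ek's derivator machinery), but there is a genuine gap at the very first step: the claim that the inclusion $\iota\colon\clH\hookrightarrow\D(R)$ ``induces an exact functor $\Htp{\clH}\to\D(R)$'' by applying $\iota$ degreewise and totalising. A complex of objects of $\clH$ is a diagram in the triangulated category $\D(R)$, and there is no functorial totalisation of such a diagram: forming it requires iterated cone constructions, which are neither functorial nor coherent in $\D(R)$. The existence of this lift (the realization functor) is precisely the hard part of the theorem and is exactly what the derivator formalism is there to produce; it cannot be assumed and then ``upgraded'' to an equivalence afterwards. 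Your later remark that the anticipated obstacle is ``checking compatibility with restriction along functors of small categories'' misplaces the difficulty: the problem is the construction of the comparison morphism at all, not its naturality. (Relatedly, the value of the relevant derivator at a small category $I$ is $\D((\Modr R)^{I})$, the derived category of the diagram category, not the functor category $\D(R)^I$; conflating the two is a symptom of the same issue.)

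The paper circumvents this by never constructing $F$ directly from the inclusion. Instead it uses the exact category $T^\perp$ as a bridge: $T^\perp$ is functorially coresolving in $\Modr R$ with coresolution dimension at most $n$, and by Propositions~\ref{P:resol-dimension} and \ref{P:co-resolving} it is functorially \emph{resolving} in $\clH$ with resolution dimension at most $n$. By \cite[Proposition 5.14, Remark 5.15]{Sto14}, each of these inclusions induces an equivalence between the derived category (derivator) of the exact category $T^\perp$ and that of the ambient abelian category, so $\D(\clH)\simeq\D(T^\perp)\simeq\D(R)$. The functoriality of the resolutions is what turns a complex of objects of $\clH$ into an honest bicomplex with strictly commuting differentials inside $T^\perp$, which can then be totalised; this is the mechanism your proposal is missing. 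Your verification steps (agreement of $\Ext^j$ on stalks, generation by $T$) are sensible consistency checks but are downstream of the construction, and in the paper's route they are subsumed by the two quoted equivalences rather than checked separately.
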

  \begin{proof} It is well known that $T^\perp 
  $ is a functorially coresolving subcategory of $\Modr R$ (see \cite[Ch. 13]{GT12}) and its coresolution dimension is bounded by the projective dimension of $T$.
  
  By Propositions~\ref{P:resol-dimension} and \ref{P:co-resolving}, $T^\perp$ is a functorially resolving subcategory of $\clH$ with resolution dimension bounded by $n$.

By \cite[Proposition 5.14]{Sto14} and  \cite[Remark 5.15]{Sto14} we can argue as in the proof of \cite[Theorem 5.21]{Sto14} to get the conclusion.
\end{proof}

  As noted in Remark~\ref{R:n=1}, in the case of a $1$-tilting module, the objects of the heart $\clH$ can be described in terms of properties of their co-homology modules.
 This is no longer true if $n>1$, but we show a characterization of the complexes in $\clH$ in terms of their cycles and boundaries. The description will be very useful in Section~\ref{S:7}.
 \begin{lem}\label{L:description} Let  $\clH$ be the heart of the  $t$-structure induced by an $n$-tilting module $T$. A complex $X\in \D(R)$ belongs to $
\clH$ if and only if it satisfies the following two conditions:
 \begin{enumerate}
 \item $X$ is quasi isomorphic to a complex \[\dots\to0\to X^{-n}\overset{d^{-n}}\to X^{-n+1}\to \dots \to X^{-1}\overset{d^{-1}}\to X^0\to 0\]  with $X^{-i}\in T^{\perp}$ for all $0\leq i<\leq n$.
 
 \item For every $1\leq i\leq n$, the following hold true:
 \begin{itemize}
 \item[(a)]
 $\Hom_R(T, \Ker d^{-i})= \Hom_R(T, \Img d^{-i-1})$, that is, the trace of $T$ in $\Ker d^{-i}$ coincides with $ \Img d^{-i-1}$.
 \item[(b)] $\Ext^1_R(T, \Ker d^{-i-1})=0$.
 \end{itemize}
  \end{enumerate}
 In particular, if $X\in \clH$ and $H^{-i}(X)=0$ for every $i> j$, then:
 \begin{itemize}
 \item[(i)]
 $X$ is quasi isomorphic to \[\dots\to 0\to X^{-j}\overset{d^{-j}}\to X^{-j+1}\to \dots \to X^{-1}\overset{d^{-1}}\to X^0\to 0\]  with $X^{-i}$ in $T^{\perp}$ for all $0\leq i\leq  j$.
 \item[(ii)] $H^{-j}(X)\in T^{\perp_0}$.
  \end{itemize}

 \end{lem}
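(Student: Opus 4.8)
The plan is to characterize membership in $\clH$ directly via the vanishing conditions $\Hom_{\D(R)}(T[i],X)=0$ for $i\neq 0$, translating them into the module-theoretic conditions (2)(a) and (2)(b) once we have fixed a nice representative for $X$. First I would invoke Lemma~\ref{L:U}: since $\clH\subseteq\U$, any $X\in\clH$ is isomorphic in $\D(R)$ to a complex concentrated in degrees $\leq 0$ with terms in $T^\perp$ (indeed in $\Add T$), and by Proposition~\ref{P:-n-0-homology} its cohomology vanishes outside degrees $-n,\dots,0$. Replacing $X$ by its brutal truncation $\sigma^{\geq -n}X$ (which is quasi-isomorphic to $X$ because $H^{-i}(X)=0$ for $i>n$) gives condition (1). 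So the content is to show that, for such a bounded complex with terms in $T^\perp$, the remaining tilting conditions $\Hom_{\D(R)}(T[i],X)=0$ for $i\geq 1$ are equivalent to (2)(a)+(2)(b) holding for all $1\leq i\leq n$.

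The key computational step is to compute $\Hom_{\D(R)}(T[i],X)$ for such a representative. By Corollary~\ref{C:hom-in-homotopy}, since $X$ has terms in $T^\perp=\B$ and $T[i]$ is cofibrant (being bounded with a projective resolution whose terms lie in $\A$, or more simply using that $T$ is $n$-tilting), we have $\Hom_{\D(R)}(T[i],X)\cong\Hom_{\K(R)}(T[i],X)$. A chain map $T[i]\to X$ is just a map $T\to X^{-i}$ landing in $\Ker d^{-i}$ and killing the composition into $X^{-i+1}$, i.e.\ an element of $\Hom_R(T,\Ker d^{-i})$; it is null-homotopic exactly when it factors through $d^{-i-1}$, and since $\Ext^1_R(T,\Ker d^{-i-1})=0$ is what makes $\Hom_R(T,-)$ right exact on the sequence $0\to\Ker d^{-i-1}\to X^{-i-1}\to\Img d^{-i-1}\to 0$, the homotopy classes are exactly $\Hom_R(T,\Ker d^{-i})/\Hom_R(T,\Img d^{-i-1})$. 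This shows the vanishing of $\Hom_{\D(R)}(T[i],X)$ amounts precisely to (2)(a). To get (2)(b) into the picture one proceeds by downward induction on $i$ (or uses that the conditions are coupled): the argument above needs $\Ext^1_R(T,\Ker d^{-i-1})=0$ to identify null-homotopic maps, and conversely, working from $i=n$ downward, one extracts the Ext-vanishing from dimension-shifting using $\mathrm{p.d.}\,T\leq n$ together with the already-established (a)-conditions; here I would mimic the dual of the bookkeeping in \cite[Lemma 5.18]{Sto14}. The converse direction—(1)+(2) imply $X\in\clH$—is then the same computation read backwards: (2)(a) gives $\Hom_{\K(R)}(T[i],X)=0$ for $i\geq 1$, and $i<0$ is automatic since $X$ is concentrated in degrees $\leq 0$.

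The ``in particular'' clause follows by the same truncation used for (1): if $H^{-i}(X)=0$ for $i>j$, then $\sigma^{\geq -j}X$ is quasi-isomorphic to $X$ with terms in $T^\perp$ in degrees $-j,\dots,0$, giving (i). For (ii), note that after this truncation $\Ker d^{-j}=H^{-j}(X)$ (there is nothing in degree $-j-1$), and condition (2)(b) applied at $i=j$—which now reads $\Ext^1_R(T,\Ker d^{-j-1})=0$ where $\Ker d^{-j-1}$ should be read in the truncated complex, forcing the relevant statement to be about $H^{-j}(X)=\Ker d^{-j}$—together with (2)(a) at the bottom (which reads $\Hom_R(T,\Ker d^{-j})=\Hom_R(T,\Img d^{-j-1})=0$) yields $H^{-j}(X)\in T^{\perp_0}$; more directly, $\Hom_{\D(R)}(T[j],X)\cong\Hom_R(T,H^{-j}(X))=0$ since the truncated complex has its top cohomology $H^{-j}$ appearing as an honest submodule of the top term.

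The main obstacle I anticipate is the careful handling of the coupling between the (a)- and (b)-conditions: a single map $T[i]\to X$ sees $\Ker d^{-i}$ and, for the homotopy relation, the behavior of $\Hom_R(T,-)$ on $\Ker d^{-i-1}$, so one cannot treat the indices independently. The clean way around this is to run the whole argument as a downward induction on the ``width'' $j$ via the exact sequences $0\to Z[-1]\to T_0[0]\to X\to 0$ of Proposition~\ref{P:resol-dimension}, peeling off one $\Add T$-term at a time and tracking how $\Ker d^{\bullet}$ and the Ext-vanishing propagate; this reduces everything to the one-step computation of $\Hom_{\K(R)}$ and $\Ext^1_R$ described above, which is routine.
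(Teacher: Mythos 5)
Your proposal is correct and follows essentially the same route as the paper: reduce to a representative in degrees $-n,\dots,0$ with terms in $T^\perp$ via Lemma~\ref{L:U} and Proposition~\ref{P:-n-0-homology}, then translate $\Hom_{\K(R)}(T[i],X)=0$ for $i\neq 0$ into conditions (2)(a) and (2)(b) through the explicit description of chain maps and null-homotopies. Your worry about ``coupling'' and the proposed downward induction are unnecessary: for each fixed $i$ the map $\Hom_R(T,X^{-i-1})\to\Hom_R(T,\Ker d^{-i})$ factors through $\Hom_R(T,\Img d^{-i-1})$, and its surjectivity (i.e.\ the vanishing of $\Hom_{\K(R)}(T[i],X)$) is exactly the conjunction of (b)$_i$ (surjectivity of the first factor, by the long exact sequence and $X^{-i-1}\in T^\perp$) and (a)$_i$ (surjectivity of the second), so the indices decouple completely.
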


\begin{proof} Let $X\in \clH$. By Lemma~\ref{L:U}, we can assume that $X$ is of the form  \[\dots \to X^{-n-1}\to X^{-n}\to X^{-n+1}\to\dots\to X^{-1}\to X^0\to 0\to 0\dots,\] with $X^i$ in $T^\perp $. By Proposition~\ref{P:-n-0-homology} the $-n-1$ truncation of $X$ is an exact complex, so $X$ is isomorphic to 
\[0\to X^{-n}/\Img d^{-n-1}\to X^{-n+1}\to\dots\to X^{-1}\to X^0\to 0\to 0\dots,\] where $X^{-n}/\Img d^{-n-1}$ is in $T^\perp,$ since $\Ker d^{-n-1}\in T^\perp$. This establishes condition (1).

Condition (2) is the translation of the fact that $\Hom_{\K(R)}(T[i], X)=0$, for every $i\neq 0$.

Conversely, it is easy to check that a complex satisfying conditions (1) and (2) belongs to $\clH$.

The last two statements follow easily from (1) and (2). 
\end{proof}

On the basis of the above description we exhibit some objects of $\clH$ using special $T^\perp$-preenvelopes of $R$-modules.

\begin{nota}\label{N:preenvelopes} If $T$ is an $n$-tilting module and $N\in \Modr R$  is an arbitrary $R$-module, we consider:
\begin{enumerate}
\item (\cite[Ch. 13]{GT12}) A $T^\perp$-coresolution of $N$, that is an exact sequence:
\[0\to N\to B^0\to B^1\to B^2\to \dots \to B^n\to 0,\]
where $B^0$ is a special $T^\perp$-preenvelope of $N$ and for every $i\geq 0$, $B^{i+1}$ is a special $T^\perp$-preenvelope of $A_i=\Coker B^{i-1}\to B^i$ (let $B^{-1}=N$). 

\item A short exact sequence $0\to K\to  T^{(\Hom(T, N))}\to\tr_T(N)\to 0$, where $\tr_T(N)$ denotes the trace of $T$ in $N$ and $K\in T^{\perp_1}$.
\end{enumerate}
Moreover, a $T^\perp$-coresolution of $N$ as in (1) can be chosen functorially in $N$, since the cotorsion generated by $T$ is functorially complete
thanks to Quillen' s small object argument and the sequence in (2) is functorial in $N$ by construction.
\end{nota}

\begin{prop}\label{P:N-in-orthogonal}  Let $N$ be an $R$-module. Consider a functorial $T^\perp$-coresolution of $N$ as in Notation~\ref{N:preenvelopes}~(1) and a list of exact sequences as in Notation~\ref{N:preenvelopes}~(2) starting with  $0\to K_{2}\to  T^{(\alpha_{2})}\to tr_T(N)\to 0$ and continuing with $0\to K_{i+1}\to  T^{(\alpha_{i+1} )}\to \tr_T(K_i)\to 0$ for every $i\geq 2$.
Glue them together to construct the complex:
\[X_N=\dots\to T^{(\alpha_{i})}\to\dots\to T^{(\alpha_{2})}\to  B^0\to B^1\to 0,\]
in degrees $\leq 0$ with differentials given by the obvious compositions of the morphisms involved in the short exact sequences. Then $X_N\in \clH$. 

Moreover:
\begin{enumerate}
\item If $N\in T^{\perp_0}$, the complex $X_N=0\to B^0\to B^1\to 0$  (in degrees $-1, 0$) belongs to $\clH$.
\item If $N\in T^{\perp_1}$, the complex \[X_N=\dots\to T^{(\alpha_{n})}\to\dots\to T^{(\alpha_{3})}\to B^0\to B^1\to B^2\to 0,\]
in degrees $\leq 0$ obtained by glueing the short exact sequence $0\to K_3\to  T^{(\alpha_3)}\to \tr_T(N)\to 0$ and the sequences $0\to K_{i+1}\to  T^{(\alpha_{i+1} )}\to \tr_T(K_i)\to 0$ for every $i\geq 3$, belongs to $ \clH$.
\item If $N\in T^{\perp_1}\cap T^{\perp_2}\cap\dots \cap T^{\perp_{n-1}}$, the complex
\[X_N=\dots\to T^{(\alpha_{i})}\to\dots\to T^{(\alpha_{n+1})}\to  B^0\to B^1\to B^2\to \dots \to B^n\to 0,\]
in degrees $\leq 0$ obtained by glueing the short exact sequences $0\to K_{n+1}\to  T^{(\alpha_{n+1})}\to \tr_T(N)\to 0$ and $0\to K_{i+1}\to  T^{(\alpha_{i+1} )}\to \tr_T(K_i)\to 0$ for every $i\geq n+1$, belongs to $ \clH$.
\end{enumerate}
\end{prop}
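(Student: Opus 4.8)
The plan is to verify that each candidate complex $X_N$ satisfies conditions (1) and (2) of Lemma~\ref{L:description}, which is the criterion we have available for membership in $\clH$. First I would deal with the general complex $X_N$ built from the $T^\perp$-coresolution $0\to N\to B^0\to B^1\to\cdots\to B^n\to 0$ together with the chain of trace sequences. The complex sits in degrees $\leq 0$, with the two nonzero ``envelope'' terms $B^0,B^1$ in degrees $-1,0$ and the terms $T^{(\alpha_i)}$ in the negative degrees below. Condition (1) of Lemma~\ref{L:description} requires all terms in $T^\perp$: the $T^{(\alpha_i)}$ are in $\Add T\subseteq T^\perp$, while $B^0$ is a special $T^\perp$-preenvelope term and $B^1=A_0=\Coker(N\to B^0)$, which lies in $T^\perp$ because $T^\perp$ is coresolving and $B^0\in T^\perp$. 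So (1) is immediate.

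The core of the argument is checking condition (2), the homotopy vanishing $\Hom_{\K(R)}(T[i],X_N)=0$ for $i\neq 0$. For this I would compute the cycles and boundaries of $X_N$ degree by degree. By construction of the trace sequences, at each negative spot the image of the differential out of $T^{(\alpha_{i+1})}$ is exactly $\tr_T(K_i)$ (or $\tr_T(N)$ at the top trace step), which is the trace of $T$ in the kernel $K_i$ of the preceding differential; this is precisely condition (2)(a). Likewise, the kernels $K_i$ appearing are by construction in $T^{\perp_1}$ (item (2) of Notation~\ref{N:preenvelopes} explicitly produces $K\in T^{\perp_1}$), and at the junction with the coresolution the relevant kernels are the $\Ker(B^j\to B^{j+1})$; these are cosyzygies of $N$ with respect to the coresolution, hence in $T^{\perp_1}$ since $T^\perp$ is coresolving and closed under the relevant extensions — exactly condition (2)(b). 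So the bookkeeping reduces to matching the two lists of short exact sequences in Notation~\ref{N:preenvelopes} against the two bullet points of Lemma~\ref{L:description}(2).

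For the three refined cases (1)--(3), the point is that when $N$ already lies in some of the orthogonal classes $T^{\perp_0},T^{\perp_1},\dots,T^{\perp_{n-1}}$, the coresolution terminates earlier (fewer $B^j$ are needed to reach a kernel already in $T^\perp$), and fewer trace sequences are needed before the kernels $K_i$ stabilize in $T^\perp$. Concretely, if $N\in T^{\perp_0}$ then $\tr_T(N)=N$ and the natural map $T^{(\Hom(T,N))}\to N$ is onto, so the first nonzero term can already be taken as $B^0$; the coresolution $0\to N\to B^0\to B^1\to 0$ suffices (since $\Ker(B^1\to 0)=B^1\in T^\perp$ already witnesses the needed vanishing), yielding the two-term complex of item (1). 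The cases $N\in T^{\perp_1}$ and $N\in T^{\perp_1}\cap\cdots\cap T^{\perp_{n-1}}$ are handled the same way, shifting where the trace sequences are grafted onto the coresolution; in each case I would re-run the degreewise cycle/boundary check to confirm (2)(a) and (2)(b) at every spot.

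I expect the main obstacle to be purely organizational rather than conceptual: keeping straight the indexing of the spliced complex — which degree each $B^j$ and each $T^{(\alpha_i)}$ occupies, where the coresolution part meets the trace part, and verifying that the differentials (defined as ``obvious compositions'' through the $K_i$) indeed have the cycles and boundaries claimed — so that conditions (2)(a), (2)(b) of Lemma~\ref{L:description} hold at \emph{every} index $i$ with $1\leq i\leq n$, including the boundary index where the two constructions are glued. The algebraic inputs (coresolving closure of $T^\perp$, the defining property $K\in T^{\perp_1}$ of the trace sequence, and $\Add T=\A\cap T^\perp$) are all already available, so once the indexing is pinned down the verification is routine dimension-shifting.
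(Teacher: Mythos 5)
Your overall plan --- verify membership in $\clH$ by checking conditions (1) and (2) of Lemma~\ref{L:description} (equivalently, all terms in $T^\perp$, hence fibrancy, plus $\Hom_{\K(R)}(T[i],X_N)=0$ for $i\neq 0$) degree by degree --- is exactly the route the paper takes, but two of your justifications are wrong in a way that matters. First, you misread $T^{\perp_0}$: in this paper $T^{\perp_0}=\{N\mid \Hom_R(T,N)=0\}$, the torsion-free class, not the class of modules with $\tr_T(N)=N$. In item (1) the two-term complex $0\to B^0\to B^1\to 0$ lies in $\clH$ because $\Ker d^{-1}\cong N$ and every map $T\to N$ is zero, so it trivially factors through $\Img d^{-2}=0$; under your reading ($\tr_T(N)=N$, canonical map onto) the claim you argue for would in general be false, since then $\Hom_{\D(R)}(T[1],X_N)\cong\Hom_{\K(R)}(T[1],X_N)\cong\Hom_R(T,N)\neq 0$. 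Second, your treatment of condition (2)(b) at the junction is not correct: the cosyzygies $\Ker(B^{j}\to B^{j+1})$ are \emph{not} in $T^{\perp_1}$ ``because $T^\perp$ is coresolving'' --- $N$ itself need not lie in $T^\perp$. In the unrestricted case no such condition is needed: the only kernels occurring in (2)(b) are the $K_i$, which lie in $T^{\perp_1}$ by Notation~\ref{N:preenvelopes}(2), while $N=\Ker d^{-1}$ enters only through (2)(a), which holds by construction. In items (2) and (3) the junction (2)(b)-conditions are exactly where the hypotheses on $N$ must be used: one needs $\Ext^1_R(T,N)=0$ and, in item (3), $\Ext^1_R(T,A_j)=0$ for the cosyzygies $A_j=\Coker(A_{j-1}\to B^j)$ with $0\leq j\leq n-3$, and these follow by dimension shifting, $\Ext^1_R(T,A_j)\cong\Ext^{j+2}_R(T,N)$ (all $B^j\in T^\perp$), i.e.\ precisely from $N\in T^{\perp_1}\cap\dots\cap T^{\perp_{n-1}}$. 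Your write-up never invokes the hypotheses in this way, so the refined cases are not actually established.

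Two smaller slips to fix: $B^1$ is a special $T^\perp$-preenvelope of $\Coker(N\to B^0)$, not that cokernel itself (this is why $B^1\in T^\perp$; the cokernel lies in $\A$ and need not be in $T^\perp$), and your description of the refined cases is backwards --- when $N$ satisfies more orthogonality conditions the complex uses \emph{more} terms of the coresolution ($B^0,B^1$, then $B^0,B^1,B^2$, then $B^0,\dots,B^n$), with the trace sequences grafted further to the left. Finally, since the complexes are unbounded below, the vanishing $\Hom_{\K(R)}(T[i],X_N)=0$ has to be checked for all $i\geq 1$, not only $1\leq i\leq n$; the sufficiency argument in the proof of Lemma~\ref{L:description} applies verbatim because all terms are in $T^\perp$, and boundedness of the cohomology is then a consequence (Proposition~\ref{P:-n-0-homology}), not an input.
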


\begin{proof} Follows easily by the characterization of the complexes in $\clH$ stated in Lemma~\ref{L:description}.
\end{proof}
We can apply the previous proposition to obtain information about the torsion radical of the torsion pair induced by an $n$-tilting  module $T$.
\begin{cor}\label{C:torsion} Let $T$ be an $n$-tilting  module $T$ and consider the torsion pair $(^{\perp_0}(T^{\perp_0}), T^{\perp_0})$ associated to $T$.

 If $N\in T^{\perp_1}\cap T^{\perp_2}\cap\dots \cap T^{\perp_{n-1}}$, then the torsion submodule of $N$ in the torsion pair  is given by $\tr_T(N)$. Moreover, $\tr_T(N)\in T^{\perp_{n}}\cap T^{\perp_{n-1}}$ and
   $N/\tr_T(N)\in T^{\perp_0}\cap T^{\perp_{n-1}}$. 
   
   In particular, if $n=2$ and $N\in T^{\perp_{1}}$, $\tr_T(N)\in T^\perp$ and $N/\tr_T(N)\in  T^{\perp_0}\cap T^{\perp_{1}}$.
   \end{cor}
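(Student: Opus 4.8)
The plan is to feed the explicit complex $X_N$ supplied by Proposition~\ref{P:N-in-orthogonal}(3) into the description of the heart and then read off the required $\Ext$-vanishings by dimension shifting; throughout we assume $n\geq2$. I would begin with the elementary remark that $\Gen T\subseteq{}^{\perp_0}(T^{\perp_0})$: any epimorphism $T^{(\lambda)}\twoheadrightarrow Y$ makes $\Hom_R(Y,X)$ a submodule of $\Hom_R(T,X)^{\lambda}$, which is zero whenever $X\in T^{\perp_0}$. Hence $\tr_T(N)\in\Gen T$ lies in the torsion class ${}^{\perp_0}(T^{\perp_0})$, and to conclude that $\tr_T(N)$ is the torsion submodule of $N$ it only remains to show that $N/\tr_T(N)$ is torsion free, i.e.\ that $N/\tr_T(N)\in T^{\perp_0}$.

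For this I would compute the cohomology of the complex
\[X_N=\dots\to T^{(\alpha_{n+1})}\to B^0\to B^1\to\dots\to B^n\to 0\]
from Proposition~\ref{P:N-in-orthogonal}(3), which belongs to $\clH$. Since $0\to N\to B^0\to B^1\to\dots\to B^n\to 0$ is the exact functorial $T^{\perp}$-coresolution of $N$, exactness gives at once $H^{i}(X_N)=0$ for $-n<i\leq0$; in degree $-n$ one has $\Ker(B^0\to B^1)\cong N$, while the differential $T^{(\alpha_{n+1})}\to B^0$ is the composite of the canonical surjection $T^{(\alpha_{n+1})}\twoheadrightarrow\tr_T(N)$ with the inclusions $\tr_T(N)\hookrightarrow N\hookrightarrow B^0$ and so has image $\tr_T(N)$, whence $H^{-n}(X_N)\cong N/\tr_T(N)$. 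As $X_N\in\clH$, Proposition~\ref{P:-n-0-homology} also kills $H^{i}(X_N)$ for $i<-n$; thus $X_N$ has cohomology concentrated in the single degree $-n$ and is therefore quasi-isomorphic to $(N/\tr_T(N))[n]$ in $\D(R)$. Since $X_N\in\clH$, the description in Theorem~\ref{T:n-tilting-t-structure} yields $\Ext^{j}_R(T,N/\tr_T(N))\cong\Hom_{\D(R)}\bigl(T[n-j],X_N\bigr)=0$ for every $j\neq n$; in particular $N/\tr_T(N)\in T^{\perp_0}\cap T^{\perp_1}\cap\dots\cap T^{\perp_{n-1}}$, which both completes the identification of the torsion submodule and gives the asserted $N/\tr_T(N)\in T^{\perp_0}\cap T^{\perp_{n-1}}$.

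It remains to locate $\tr_T(N)$. Applying $\Hom_R(T,-)$ to the short exact sequence $0\to K\to T^{(\Hom(T,N))}\to\tr_T(N)\to0$ of Notation~\ref{N:preenvelopes}(2), where $K\in T^{\perp_1}$, and using (T2) together with $\mathrm{p.d.}\,T\leq n$, we get $\Ext^{n}_R(T,\tr_T(N))\cong\Ext^{n+1}_R(T,K)=0$. Applying $\Hom_R(T,-)$ to $0\to\tr_T(N)\to N\to N/\tr_T(N)\to0$, the exact segment $\Ext^{j-1}_R(T,N/\tr_T(N))\to\Ext^{j}_R(T,\tr_T(N))\to\Ext^{j}_R(T,N)$ has both outer terms zero for $1\leq j\leq n-1$ — the left one by the previous paragraph since $0\leq j-1\leq n-2$, the right one by the hypothesis on $N$ — whence $\Ext^{j}_R(T,\tr_T(N))=0$ there as well. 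Therefore $\tr_T(N)\in\bigcap_{1\leq i\leq n}T^{\perp_i}=T^{\perp}$, in particular $\tr_T(N)\in T^{\perp_{n-1}}\cap T^{\perp_n}$; the case $n=2$ follows by specializing to $N\in T^{\perp_1}$. The one genuinely delicate step is the cohomology computation of $X_N$ and the ensuing quasi-isomorphism $X_N\cong(N/\tr_T(N))[n]$ — in particular distinguishing the differentials of $X_N$ coming from the $T^{\perp}$-coresolution from those coming from the iterated trace sequences — everything else being routine dimension shifting.
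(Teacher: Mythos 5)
Your proof is correct and rests on the same key input as the paper's: the complex $X_N$ of Proposition~\ref{P:N-in-orthogonal}~(3) together with the computation $H^{-n}(X_N)\cong N/\tr_T(N)$. The paper's own argument is a single sentence --- it invokes Lemma~\ref{L:description}~(ii) to get $H^{-n}(X_N)\in T^{\perp_0}$ and leaves the remaining assertions implicit --- whereas you push the same idea further: having checked that $X_N$ has cohomology concentrated in degree $-n$ (exactness of the coresolution in degrees $>-n$, Proposition~\ref{P:-n-0-homology} in degrees $<-n$), you identify $X_N$ with the stalk complex $(N/\tr_T(N))[n]$ and read off directly from the definition of $\clH$ the full vanishing $\Ext^j_R(T,N/\tr_T(N))=0$ for all $j\neq n$, which is sharper than the stated $T^{\perp_0}\cap T^{\perp_{n-1}}$. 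That extra strength is precisely what makes your dimension-shifting step for the trace go through, yielding $\tr_T(N)\in T^{\perp}$ for every $n$ rather than only $T^{\perp_n}\cap T^{\perp_{n-1}}$, with the $n=2$ case as stated; and your explicit check that $\Gen T\subseteq{}^{\perp_0}(T^{\perp_0})$, needed to identify $\tr_T(N)$ as the torsion submodule, is also something the paper omits. The standing hypothesis $n\geq 2$ you impose is the right reading of the statement (for $n=1$ the assertion $\tr_T(N)\in T^{\perp_{n-1}}=T^{\perp_0}$ would force $\tr_T(N)=0$). In short: same approach, carried out in full and with slightly stronger conclusions.
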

   \begin{proof} Given $N$, consider the complex $X_N$ constructed in Proposition~\ref{P:N-in-orthogonal}~(3). Since $X_N\in \clH$, $H^{n}(X)\in T^{\perp_0}$, by Lemma~\ref{L:description}~(ii) and by construction $H^{-n}(X)\cong N/\tr_T(N)$.
   \end{proof}   
   
Another application of Proposition~\ref{P:N-in-orthogonal} is given by the following:   
   \begin{prop}\label{P:coproducts} The heart $\clH$  is closed under coproducts in $\D(R)$ if and only if  $T^{\perp_1}$ is closed under direct sums in $\Modr R$.
  
 \end{prop}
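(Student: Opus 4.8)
\textbf{Proof plan for Proposition~\ref{P:coproducts}.}

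The plan is to unwind what it means for $\clH$ to be closed under coproducts, using the description of the objects of $\clH$ provided by Lemma~\ref{L:U} and Lemma~\ref{L:description}, together with the construction of the objects $X_N$ in Proposition~\ref{P:N-in-orthogonal}. The key point is that direct sums in $\D(R)$ are computed termwise on complexes of modules (after choosing nice representatives), so coproducts in $\clH$ — when they exist — must be the objects of $\clH$ represented by the termwise direct sums, and the obstruction to their membership in $\clH$ lies entirely in the vanishing conditions of Lemma~\ref{L:description}~(2).

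For the ``if'' direction, suppose $T^{\perp_1}$ is closed under direct sums. Let $\{X_\lambda\}_{\lambda\in\Lambda}$ be a family of objects of $\clH$. By Lemma~\ref{L:U} each $X_\lambda$ is represented by a complex concentrated in degrees $\leq 0$ with terms in $T^\perp$, and by Lemma~\ref{L:description} these representatives satisfy conditions (2a) and (2b). Form the complex $X=\bigoplus_\lambda X_\lambda$ degreewise; it is concentrated in degrees $\leq 0$, and since $T^\perp$ is a tilting class it is definable, hence closed under direct sums, so $X$ has terms in $T^\perp$. One then checks that $X$ represents the coproduct of the $X_\lambda$ in $\D(R)$ (direct sums exist in $\D(R)$ and are computed this way, using e.g. that $T$ is compact-like enough — more precisely using Corollary~\ref{C:hom-in-homotopy} and that direct sums of fibrant-type complexes behave well). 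It remains to verify that $X\in\clH$, i.e. that $X$ satisfies (2a) and (2b). Since kernels, images and $\Hom_R(T,-)$ all commute with direct sums of the relevant exact sequences (the sequences $0\to \Ker d^{-i}_\lambda\to X^{-i}_\lambda\to \Img d^{-i}_\lambda\to 0$ remain exact after $\bigoplus_\lambda$), condition (2a) for $X$ follows from (2a) for each $X_\lambda$. For (2b) we need $\Ext^1_R(T,\bigoplus_\lambda \Ker d^{-i-1}_\lambda)=0$; each $\Ker d^{-i-1}_\lambda\in T^{\perp_1}$ by (2b) for $X_\lambda$, and by hypothesis $T^{\perp_1}$ is closed under direct sums, so $\bigoplus_\lambda \Ker d^{-i-1}_\lambda\in T^{\perp_1}$, giving (2b) for $X$. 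Hence $X\in\clH$ and $\clH$ is closed under coproducts.

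For the ``only if'' direction, suppose $\clH$ is closed under coproducts, and let $\{N_\lambda\}_{\lambda\in\Lambda}$ be a family in $T^{\perp_1}$; I want $\bigoplus_\lambda N_\lambda\in T^{\perp_1}$. Here I would apply Proposition~\ref{P:N-in-orthogonal}~(2): for each $N_\lambda$ choose the object $X_{N_\lambda}\in\clH$ constructed there, whose two lowest nonzero cohomologies in particular encode $N_\lambda$ — concretely $X_{N_\lambda}$ is built from a $T^\perp$-coresolution $0\to N_\lambda\to B^0_\lambda\to B^1_\lambda\to B^2_\lambda\to 0$ glued with the trace sequences, so that $H^{-2}(X_{N_\lambda})$ and the relevant kernel/boundary data recover $N_\lambda$ (for $n=2$ one has $X_{N_\lambda}=0\to B^0_\lambda\to B^1_\lambda\to B^2_\lambda\to 0$ with $\Ker d^{-2}$ related to $N_\lambda$ via $0\to N_\lambda\to B^0_\lambda$, and for general $n$ the extra terms are copies of $T$). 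The coproduct $\bigoplus_\lambda X_{N_\lambda}$ lies in $\clH$ by hypothesis, and — computing it termwise — Lemma~\ref{L:description}~(2b) applied to this coproduct forces $\Ext^1_R(T,-)$ to vanish on the direct sum of the relevant kernels, which are (up to the harmless summand coming from copies of $T$, on which $\Ext^1_R(T,-)$ vanishes by (T2)) exactly the $N_\lambda$; equivalently, reading off the appropriate cohomology or boundary of $\bigoplus_\lambda X_{N_\lambda}$ one gets that $\bigoplus_\lambda N_\lambda$ must satisfy the $T^{\perp_1}$-condition. Hence $\bigoplus_\lambda N_\lambda\in T^{\perp_1}$.

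\textbf{Main obstacle.} The routine part is the bookkeeping in Lemma~\ref{L:description}. The step requiring the most care is the first one: establishing that coproducts in $\D(R)$ of objects of $\clH$ are represented by the termwise direct sums of the chosen $T^\perp$-valued representatives, and that this termwise sum is again cofibrant/fibrant enough for Corollary~\ref{C:hom-in-homotopy} to apply so that one may verify membership in $\clH$ by the homotopy-category $\Hom$-vanishing. Once direct sums in $\D(R)$ are identified with degreewise direct sums on these good representatives, everything else reduces to the fact that $T^\perp$ is definable (closed under direct sums) and that $T$ is self-small enough — via (T2) — for the remaining $\Ext^1$ conditions to collapse precisely to the closure of $T^{\perp_1}$ under direct sums.
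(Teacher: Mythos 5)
Your proposal is correct and follows essentially the same route as the paper: represent the objects of $\clH$ by complexes with terms in $T^\perp$ as in Lemma~\ref{L:description}, take termwise coproducts (whose cycles and boundaries are the coproducts of the cycles and boundaries) so that condition (2b) reduces exactly to closure of $T^{\perp_1}$ under direct sums, and for the converse apply the hypothesis to the coproduct of the complexes $X_{N_\alpha}$ from Proposition~\ref{P:N-in-orthogonal}~(2) and read off $\Ext^1_R(T,\oplus_\alpha N_\alpha)=0$ from Lemma~\ref{L:description}. The extra care you devote to identifying coproducts in $\D(R)$ with degreewise direct sums of these representatives is a point the paper takes for granted, not a divergence in method.
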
 
 \begin{proof} Let $(X_{\alpha}: \alpha\in \Lambda)$ be a family of objects of $\clH$. Up to isomorphisms the $X_{\alpha}$' are represented by complexes in $\Ch (R)$ as described in Lemma~\ref{L:description}. Let $X$ be the coproduct of the $X_{\alpha}$' in $\Ch (R)$. The cycles and boundaries of the complex $X$ are the coproducts of the cycles and the boundaries of the complexes $X_{\alpha}$. Thus if $T^{\perp_1}$ is closed under direct sums, $X$ satisfies conditions (1) and ~(2) of Lemma~\ref{L:description}, hence $X\in \clH$.
 
 Conversely, assume that $\clH$ is closed under coproducts in $\D(R)$ and let $(N_{\alpha}: \alpha\in \Lambda)$ be  a family of modules in $T^{\perp_1}$.
  For each $\alpha$ consider the complex $X_{N_{\alpha}}\in \clH$ constructed in Proposition~\ref{P:N-in-orthogonal}~(2). By assumption the coproduct in $\D(R)$ of the $X_{N_{\alpha}}$'s belongs to $\clH$ and, again by Lemma~\ref{L:description} conditions (1) and (2), we conclude that $\Ext^1_R(T, \oplus N_{\alpha})=0$.
  \end{proof}
\begin{rem}\label{R:coproducts} The condition that $T^{\perp_1}$ be closed under direct sums is automatically true for a $1$-tilting module $T$, but in general it is not true for an $n$-tilting module with $n>1$.
\end{rem}
%

\section{The heart $\clH$ and the module category over $\End(T)$}~\label{s:End(T)}

We will make use of the results about derived equivalence induced by {\sl good} $n$-tilting modules proved in \cite{BMT} and \cite{BP}.  

\begin{defn}\label{D:good} An $n$-tilting module $T_R$ is \emph{good} if the terms in the exact sequence (T3) in Definition~\ref{D:n-tilting}can be chosen to be direct summands of finite direct sums of copies of $T$. 
By \cite[Proposition 1.3]{BMT} every $n$-tilting module $T_R$ is equivalent to a good $n$-tilting module. \end{defn}

We recall the following facts about good $n$-tilting modules.
\begin{fact}\label{F:good-tilting}\emph{Let $T$ is  a good $n$-tilting module with $S=\End(T_R)$. The following hold:
\begin{enumerate}
\item
 (\cite{BMT} and \cite{Miy}) 
 \begin{enumerate}
 \item[(a)] p.d$_ST\leq n$ and $_ST$ has a finite projective resolution consisting of finitely generated projective left $S$-modules.
\item[(b)] $\Ext^i_S(T, T^{(\alpha)})=0$ for every $i\geq 1$.
\end{enumerate}
\item (\cite[Theorem 2.2]{BMT}, \cite[Proposition 5.2]{BP})
 \begin{enumerate}
\item[(i)] The pair $(\mathbb L G, \mathbb R H)$:
\[\xymatrix{\D(R) \ar@/_1pc/
[rrr]_{\mathbb R H=\RHom_R(T, -)} &&&\D(S)  \ar@/_1pc/[lll]_{\mathbb L G= -\Lotimes_S T}} 
\]
 is  an adjoint pair;
\item[(ii)]  the functor $\mathbb R H:\D(R)\to\D(S)$ is fully faithful;
\item[(iii)] the essential image of $\RHom _R(T, -)$ is $\Ker  (\mathbb L G)^\perp$ where \[\Ker  (\mathbb L G)^\perp= \{Z\in \D(S)\mid \Hom_{\D(S)}(Y, Z)=0, \textrm{\ for\ all\ } Y\in \Ker  ( \mathbb L G)\},\] 
\end{enumerate}
\end{enumerate}}

\end{fact}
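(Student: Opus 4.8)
The two items are largely independent, and within (2) the parts (ii) and (iii) follow formally once (i) and the counit isomorphism are available; so I would organize the argument as follows.

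Parts (1)(a)--(b) I would get directly from goodness together with (T2). Since $T$ is good, fix an exact sequence $0\to R\to T^0\to\cdots\to T^n\to 0$ with each $T^i$ a summand of a finite direct sum of copies of $T$, so that the bounded complex $\mathbf T=(T^0\to\cdots\to T^n)$, placed in degrees $0,\dots,n$, is quasi-isomorphic to $R$ in $\D(R)$. Because $T^i\in\Add T$, condition (T2) gives $\Ext^{\ge1}_R(T^i,T)=0$, so the $T^i$ are $\Hom_R(-,T)$-acyclic; hence $\RHom_R(R,T)$ is computed by $\Hom_R(\mathbf T,T)=(\Hom_R(T^n,T)\to\cdots\to\Hom_R(T^0,T))$, whose terms are direct summands of finitely generated free left $S$-modules, i.e.\ finitely generated projective left $S$-modules. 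Since $\RHom_R(R,T)={}_S T$ is concentrated in degree $0$, this complex is a finite finitely generated projective resolution $Q_\bullet$ of ${}_S T$, which is (a). For (b) I would resolve ${}_S T$ by $Q_\bullet$: finite generation of the $Q_j$ makes $\Hom_S(Q_j,-)$ commute with coproducts, so $\Ext^i_S(T,T^{(\alpha)})$ is a coproduct of copies of $\Ext^i_S(T,T)$, and it remains to see $\Ext^{\ge1}_S(T,T)=0$; the biduality maps $T^i\to\Hom_S(\Hom_R(T^i,T),{}_S T)$ are isomorphisms for $T^i\in\Add T$ (they are for $T^i=T$, and both sides are additive), so they identify $\Hom_S(Q_\bullet,{}_S T)$ with $\mathbf T$ again, whose cohomology is $R$ in degree $0$ and zero elsewhere — giving $\Ext^{\ge1}_S(T,T)=0$ (and, incidentally, $\End_S({}_S T)\cong R$).

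For (2)(i) the adjunction $(\mathbb{L}G,\mathbb{R}H)=(-\Lotimes_S T,\RHom_R(T,-))$ is just the derived tensor--hom adjunction: resolving $M\in\D(S)$ by a $K$-projective complex of $S$-modules and $X\in\D(R)$ by a $K$-injective complex of $R$-modules, the classical isomorphism $\Hom_R(M\otimes_S T,X)\cong\Hom_S(M,\Hom_R(T,X))$ passes to the homotopy categories and yields $\Hom_{\D(R)}(\mathbb{L}G\,M,X)\cong\Hom_{\D(S)}(M,\mathbb{R}H\,X)$; nothing beyond the bimodule structure of $T$ is needed here, while (1)(a) and $\mathrm{pd}_R T\le n$ ensure both functors restrict well to the bounded subcategories. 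For (2)(ii) the plan is to show the counit $\varepsilon\colon\mathbb{L}G\,\mathbb{R}H\to\mathrm{id}_{\D(R)}$ is a natural isomorphism, which is equivalent to $\mathbb{R}H$ being fully faithful (apply $\Hom_{\D(R)}(-,Y)$ to $\varepsilon_X$ and use (i)). Evaluating at $R$: from $R\simeq\mathbf T$ and $\Ext^{\ge1}_R(T,T^i)=0$ one obtains $\mathbb{R}H(R)=\RHom_R(T,R)\simeq(\Hom_R(T,T^0)\to\cdots\to\Hom_R(T,T^n))$, a bounded complex of finitely generated projective right $S$-modules; applying $-\otimes_S T$ (which agrees with $-\Lotimes_S T$ on this complex of projectives) and using that the evaluation map $\Hom_R(T,T^i)\otimes_S T\to T^i$ is an isomorphism for $T^i\in\Add T$ recovers $\mathbf T\simeq R$, and this identification is $\varepsilon_R$. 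The class of $X$ with $\varepsilon_X$ invertible is a triangulated subcategory of $\D(R)$ containing $R$, hence containing $\thick(R)$, i.e.\ all perfect complexes. The main obstacle is to push this to all of $\D(R)$: this would be immediate if $\mathbb{R}H$ commuted with arbitrary coproducts, but $T$ is in general not a compact object of $\D(R)$ (typically not even finitely generated), so already $\Hom_R(T,-)$ fails to commute with coproducts; getting from $\thick(R)$ to all of $\D(R)$ is exactly the delicate point that requires the analysis of \cite{BMT} and \cite{BP}, which I would follow — reducing, via $\Add T$-coresolutions of modules and the definability of $T^\perp$, to the settled case.

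Part (2)(iii) is then the standard description of the essential image of a fully faithful right adjoint. Once $\varepsilon$ is invertible, $\mathbb{L}G$ is essentially surjective (its image is a localizing subcategory of $\D(R)$ containing $R$), so it induces an equivalence $\D(S)/\Ker(\mathbb{L}G)\simeq\D(R)$ of which $\mathbb{R}H$ is a section. For $Y\in\Ker(\mathbb{L}G)$ and any $X$ one has $\Hom_{\D(S)}(Y,\mathbb{R}H\,X)\cong\Hom_{\D(R)}(\mathbb{L}G\,Y,X)=0$, so the essential image of $\mathbb{R}H$ lies in $\Ker(\mathbb{L}G)^\perp$; conversely, for $Z\in\Ker(\mathbb{L}G)^\perp$ the cone of the unit $Z\to\mathbb{R}H\,\mathbb{L}G(Z)$ lies in $\Ker(\mathbb{L}G)$ (its image under $\mathbb{L}G$ is inverse to $\varepsilon_{\mathbb{L}G(Z)}$, hence invertible), so by orthogonality that cone vanishes and $Z\cong\mathbb{R}H\,\mathbb{L}G(Z)$ belongs to the image. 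This identifies the essential image of $\RHom_R(T,-)$ with $\Ker(\mathbb{L}G)^\perp$, which is (iii).
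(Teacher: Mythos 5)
Your proposal is essentially correct, and it should be said up front that the paper itself gives no proof of this Fact: it is quoted from \cite{Miy}, \cite[Theorem 2.2]{BMT} and \cite[Proposition 5.2]{BP}, so you actually supply more argument than the paper does. Your treatment of (1) is the standard Miyashita/BMT computation: applying $\Hom_R(-,T)$ to the good coresolution $0\to R\to T^0\to\cdots\to T^n\to 0$, whose terms lie in $\add T$ so the relevant $\Ext$-groups vanish by (T2), yields a length-$\leq n$ resolution of ${}_ST$ by finitely generated projective left $S$-modules, and the biduality argument for (1)(b) is correct --- with the small caveat that additivity only gives the biduality and evaluation isomorphisms on $\add T$, not on all of $\Add T$ as you wrote; this is harmless here because goodness places the $T^i$ in $\add T$. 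Part (2)(i) is indeed the usual derived tensor--hom adjunction, and your derivation of (2)(iii) from (i) and (ii), via the triangle identities (so that the cone of the unit at $Z\in\Ker(\mathbb{L}G)^{\perp}$ lands in $\Ker(\mathbb{L}G)$ and is then killed by orthogonality) is correct and is the standard identification of the essential image of a fully faithful right adjoint. The one place where you do not give a complete argument is (2)(ii): you check that the counit is invertible on $\thick(R)$ and correctly diagnose why the naive extension to all of $\D(R)$ fails (since $T$ is in general not compact, $\RHom_R(T,-)$ need not preserve coproducts), but you then defer exactly this step to \cite{BMT} and \cite{BP}. Since that is precisely the nontrivial theorem the paper is citing, and the paper offers no argument either, the deferral is consistent with the paper's treatment; just be aware that your write-up is not an independent proof of (2)(ii), whereas the remaining parts you do prove in full.
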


We illustrate a property of the functor $ \mathbb L G$ which will be useful in Section~\ref{s:dir-lim-heart}.
 \begin{lem}\label{L:counit} Let
Let  $(M_{\alpha}; g_{\beta\alpha})_{\alpha\in \Lambda}$ be a  direct system of right $S$-modules. There are   projective resolutions $P_{\alpha}$  of $M_{\alpha}$ such that the direct system $(M_{\alpha}; g_{\beta\alpha})$ can be lifted to a direct system $(P_{\alpha};\tilde{g}_{\beta\alpha})_{\alpha\in \Lambda}$ in $\Ch(S)$ giving rise to the following isomorphisms in $
\D(R)$:
\[ M_{\alpha}\Lotimes_ST\cong P_{\alpha}\otimes_ST;\, \varinjlim\limits_{\Ch (R)}(P_{\alpha}\otimes_ST)\cong (\varinjlim\limits_{\Ch (S)}P_{\alpha})\otimes_ST)\cong (\varinjlim\limits_{\Modr S}M_{\alpha})\Lotimes_ST\].
\end{lem}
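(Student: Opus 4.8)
The statement asserts the existence of projective resolutions $P_\alpha$ of the $M_\alpha$, compatible with the direct system, such that tensoring with $T$ computes the derived tensor product and commutes with the direct limit taken at the level of complexes. The natural strategy is to build the resolutions functorially. First I would invoke a functorial projective resolution: there is a functor $\mathbf P\colon \Modr S\to \Ch(S)$ together with a natural quasi-isomorphism $\mathbf P(M)\to M[0]$ with $\mathbf P(M)$ a complex of projectives concentrated in non-positive degrees (for instance the canonical/bar resolution, or the resolution coming from the small object argument for the projective cotorsion pair $(\clP,\Modr S)$, cf.\ Example~\ref{E:1}). Applying $\mathbf P$ to the morphisms $g_{\beta\alpha}$ yields chain maps $\tilde g_{\beta\alpha}=\mathbf P(g_{\beta\alpha})$ which, by functoriality, satisfy the compatibility conditions of a direct system $(P_\alpha;\tilde g_{\beta\alpha})$ in $\Ch(S)$ lifting $(M_\alpha;g_{\beta\alpha})$.

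\textbf{Key steps.} Once the lifted direct system is in place, the three isomorphisms follow in order. (1) $M_\alpha\Lotimes_S T\cong P_\alpha\otimes_S T$: this is the definition of the derived tensor product, since $P_\alpha$ is a complex of projective modules quasi-isomorphic to $M_\alpha[0]$ and the terms $P_\alpha^{\,i}$ are projective hence flat, so $P_\alpha\otimes_S T$ represents $M_\alpha\Lotimes_S T$ in $\D(R)$. (2) $\varinjlim_{\Ch(R)}(P_\alpha\otimes_S T)\cong(\varinjlim_{\Ch(S)}P_\alpha)\otimes_S T$: direct limits in $\Ch(S)$ and in $\Ch(R)$ are computed degreewise, and $-\otimes_S T$ (as a left adjoint) commutes with all colimits, so this is immediate. (3) $(\varinjlim_{\Ch(S)}P_\alpha)\otimes_S T\cong(\varinjlim_{\Modr S}M_\alpha)\Lotimes_S T$: here one must check that $\varinjlim_{\Ch(S)}P_\alpha$ is a complex of projective (equivalently, flat) $S$-modules quasi-isomorphic to $(\varinjlim_{\Modr S}M_\alpha)[0]$. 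The quasi-isomorphism part uses the exactness of direct limits in the Grothendieck category $\Modr S$: applying $\varinjlim$ to the quasi-isomorphisms $P_\alpha\to M_\alpha[0]$ preserves homology, so $\varinjlim_{\Ch(S)}P_\alpha$ has homology $\varinjlim M_\alpha$ in degree $0$ and zero elsewhere. The flatness part uses that a direct limit of flat $S$-modules is flat. Hence $\varinjlim_{\Ch(S)}P_\alpha$ is a flat (bounded above) resolution of $\varinjlim M_\alpha$ and therefore computes $(\varinjlim M_\alpha)\Lotimes_S T$.

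\textbf{Main obstacle.} The only genuinely delicate point is step (3): one needs that tensoring a \emph{bounded-above complex of flat modules} (not necessarily projective, since a direct limit of projectives is flat but need not be projective) still computes the derived tensor product. This is standard — a bounded-above complex of flat modules is $K$-flat, so it is adapted to $-\Lotimes_S T$ — but it is worth noting that one should work with flat rather than projective modules after passing to the limit, and that $\varinjlim_{\Ch(S)}P_\alpha$ is indeed bounded above since all $P_\alpha$ are concentrated in degrees $\le 0$. Everything else is a routine bookkeeping of functoriality and the commutation of $-\otimes_S T$ with colimits. I would present the argument in exactly the order (resolutions $\to$ lifted system $\to$ isomorphism (1) $\to$ isomorphism (2) $\to$ isomorphism (3)), flagging the $K$-flatness remark at step (3).
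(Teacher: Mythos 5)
Your proposal is correct and follows essentially the same route as the paper: a functorial projective resolution (the paper uses the canonical epimorphisms $S^{(\Hom_S(S,A))}\to A$ iterated) lifts the direct system to $\Ch(S)$, then one uses that $-\otimes_S T$ commutes with colimits and that $\varinjlim_{\Ch(S)}P_{\alpha}$ is a bounded-above flat resolution of $\varinjlim M_{\alpha}$, by exactness of direct limits and closure of flats under direct limits. Your explicit remark that one must pass from projective to flat (K-flat) complexes after taking the limit is exactly the point the paper's phrase ``flat resolution'' relies on.
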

\begin{proof} 
The observation that for every module $A\in \Modr S$ the canonical epimorphism $S^{(\Hom_S(S, A))}\to A$ is functorial in $A$, implies that for each $M_{\alpha}\in \Modr S$ we can choose functorially a projective resolution $P_{\alpha}$ so that the direct system $(M_{\alpha}; g_{\beta\alpha})_{\alpha\in \Lambda}$ in $\Modr S$ can be lifted to a direct system $(P_{\alpha};\tilde{g}_{\beta\alpha})_{\alpha\in \Lambda}$ in $\Ch(S)$. We have $M_{\alpha}\Lotimes_ST\cong P_{\alpha}\otimes_ST$ and, since $\varinjlim\limits_{\Ch (S)}P_{\alpha}$ is a flat resolution of $M{}={}\varinjlim\limits_{\Modr S} M_{\alpha}$ we also get $ M\Lotimes_ST\cong (\varinjlim\limits_{\Ch (S)}P_{\alpha})\otimes_ST$. Thus the following isomorphisms hold in $\D(R)$:
\[\varinjlim\limits_{\Ch (R)}(P_{\alpha}\otimes_ST)\cong (\varinjlim\limits_{\Ch (S)}P_{\alpha})\otimes_ST\cong (\varinjlim\limits_{\Modr S}M_{\alpha})\Lotimes_ST.\]
\end{proof}

{\bf From now on in this section, $\clH$ will always denote the $t$-structure induced by a good $n$-tilting module $T_R$ with endomorphism ring $S$.}

%
%
%
%
%
A characterization of the objects in $\clH$ is given by the following:
\begin{lem}\label{L:hom-in-degree-0}  A complex in $\D(R)$ belongs to $\clH$ if and only if it is isomorphic to a complex $X$ with terms in the tilting class $T_R^\perp$ such that $\RHom_R(T, X)$ has cohomology concentrated in degree zero and isomorphic to $\Hom_{\D(R)}(T, X)$ (and thus also to $\Hom_{\K(R)}(T, X)$ by Corollary~\ref{C:hom-in-homotopy}).
\end{lem}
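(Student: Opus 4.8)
The plan is to characterize $\clH$ via the behavior of the functor $\Rder H = \RHom_R(T,-)$, using the description of $\clH$ already obtained in Lemma~\ref{L:U} and Theorem~\ref{T:n-tilting-t-structure} together with the machinery of Corollary~\ref{C:hom-in-homotopy}. First I would recall that, by Lemma~\ref{L:U} and Theorem~\ref{T:n-tilting-t-structure}, every object of $\clH$ is isomorphic in $\D(R)$ to a complex $X$ concentrated in non-positive degrees with all terms $X^{-i}\in T^\perp$, and that $Y\in\clH$ if and only if $\Hom_{\D(R)}(T[i],Y)=0$ for all $i\neq 0$. The key translation step is to identify, for such an $X$, the cohomology of $\Rder H(X)$. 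Since $X$ has terms in $T^\perp$ and is cofibrant in the model structure of Theorem~\ref{T:M-model-structure} (being bounded above with terms in $\A$, or more precisely quasi-isomorphic to such a complex — here one argues via Corollary~\ref{C:hom-in-homotopy}), one has for each $i\in\bbZ$ a natural isomorphism
\[
H^i\bigl(\RHom_R(T,X)\bigr)\;\cong\;\Hom_{\D(R)}(T,X[i])\;\cong\;\Hom_{\D(R)}(T[-i],X),
\]
the first isomorphism being the standard computation of the cohomology of a derived Hom complex by shifts, the second a trivial rewriting.

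Granting this identification, the proof is essentially formal. For the forward direction, if a complex belongs to $\clH$, replace it up to isomorphism by a complex $X$ with terms in $T_R^\perp$ as above; then $H^i(\RHom_R(T,X))\cong\Hom_{\D(R)}(T[-i],X)$ vanishes for $i\neq 0$ by the description of the heart in Theorem~\ref{T:n-tilting-t-structure}, so $\RHom_R(T,X)$ has cohomology concentrated in degree zero, and in that degree it equals $\Hom_{\D(R)}(T,X)$, which by Corollary~\ref{C:hom-in-homotopy} coincides with $\Hom_{\K(R)}(T,X)$. For the converse, suppose a complex in $\D(R)$ is isomorphic to an $X$ with terms in $T_R^\perp$ such that $\RHom_R(T,X)$ has cohomology concentrated in degree zero; then for every $i\neq 0$ we get $0=H^i(\RHom_R(T,X))\cong\Hom_{\D(R)}(T[-i],X)$, so $\Hom_{\D(R)}(T[j],X)=0$ for all $j\neq 0$, which by Theorem~\ref{T:n-tilting-t-structure} says exactly that $X\in\clH$. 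The extra clause that the degree-zero cohomology is isomorphic to $\Hom_{\D(R)}(T,X)$ is then automatic from the isomorphism above at $i=0$, and equals $\Hom_{\K(R)}(T,X)$ by Corollary~\ref{C:hom-in-homotopy} since $X$ has terms in $\B=T^\perp$.

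The main obstacle, and the place where care is needed, is justifying the isomorphism $H^i(\RHom_R(T,X))\cong\Hom_{\D(R)}(T,X[i])$ for a complex $X$ that merely has terms in $T^\perp$ rather than, say, a bounded complex of projectives: one must know that $X$ computes $\RHom_R(T,X)$ correctly, i.e. that $X$ may be taken cofibrant (fibrant in the relevant model structure) so that $\RHom$ agrees with the honest Hom-complex up to the homotopy category. This is precisely the content that Corollary~\ref{C:hom-in-homotopy} is designed to supply — it says that for $Y$ a complex with terms in $\B$ and $Z$ cofibrant in the model structure of Theorem~\ref{T:M-model-structure}, $\Hom_{\K(R)}(Z,Y)\cong\Hom_{\D(R)}(Z,Y)$ — applied with $Z$ a projective resolution of $T$ (which is bounded above with projective terms, hence cofibrant) and $Y=X[i]$. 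Once this point is settled the remainder is bookkeeping with shifts and the already-established description of $\clH$.
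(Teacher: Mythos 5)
Your argument is correct, and its skeleton (reduce to a complex $X$ with terms in $T^\perp$ via Lemma~\ref{L:U}, then translate membership in $\clH$ into vanishing of the cohomologies of $\RHom_R(T,X)$ using Theorem~\ref{T:n-tilting-t-structure} and Corollary~\ref{C:hom-in-homotopy}) is the same as the paper's; the difference lies in how the key identification $H^i(\RHom_R(T,X))\cong\Hom_{\D(R)}(T,X[i])$ is obtained. You resolve on the $T$-side: replace $T$ by a bounded above complex of projectives $Z$ and use $H^i(\Hom^\bullet(Z,X))\cong\Hom_{\K(R)}(Z,X[i])\cong\Hom_{\D(R)}(T,X[i])$, which is essentially the general formula $H^i(\RHom)\cong\Hom_{\D}(-,-[i])$ and needs nothing about the terms of $X$ except for the final clause identifying $H^0$ with $\Hom_{\K(R)}(T,X)$. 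The paper instead resolves nothing and uses the $X$-side: since $X$ has terms in $T^\perp$ and $\mathrm{p.d.}\,T\leq n$, the complex $X$ is $\Hom_R(T,-)$-acyclic, so $\RHom_R(T,X)\cong\Hom_R(T,X)$ (citing Hartshorne), and then $H^{-i}(\Hom_R(T,X))\cong\Hom_{\K(R)}(T[i],X)\cong\Hom_{\D(R)}(T[i],X)$. Your route is slightly more elementary and formal; the paper's route has the side benefit of exhibiting $\RHom_R(T,X)$ as the honest Hom-complex $\Hom_R(T,X)$, a fact that is convenient later (e.g.\ in Proposition~\ref{P:left-adjoint}), though it is not part of the statement. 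One small slip: in your middle paragraph you call $X$ ``cofibrant \dots being bounded above with terms in $\A$''; a complex with terms in $T^\perp=\B$ is fibrant, not cofibrant, and the correct mechanism is the one you state in your final paragraph ($Z$ a projective resolution of $T$ cofibrant, $Y=X[i]$ with terms in $\B$), so the argument stands after deleting that phrase.
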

\begin{proof} By Lemma~\ref{L:fibrant} every complex in $\D(R)$ is isomorphic to a complex $X$ with terms in $T^\perp$. Thus, $X$ is a $\Hom_R(T, -)$-acyclic object and since p.d.$T\leq n$, $\RHom_R(T, X)\cong\Hom_R(T, X)$ (see e.g. \cite[Theorem I.5.1]{Hart}). Moreover, $H^{-i}(\Hom_R(T, X)\cong \Hom_{\K(R)}(T[i], X)$ and by Corollary~\ref{C:hom-in-homotopy},  $\Hom_{\K(R)}(T[i], X)\cong \Hom_{\D(R)}(T[i], X)$. 

Hence if $X\in \clH$, then $\RHom_R(T, X)$ has cohomology concentrated in degree zero and it is isomorphic to $\Hom_{\K(R)}(T, X)\cong \Hom_{\D(R)}(T, X)$. 

Conversely, if $\RHom_R(T, X)\cong \Hom_{\D(R)}(T, X)$, then $\Hom_{\K(R)}(T[i], X)=0$ for every $i\neq 0$, hence $X\in \clH$.
\end{proof}
 \begin{prop}\label{P:left-adjoint} 
The following condition hold true:
 \begin{enumerate}
 \item The functor
$H_T= \Hom_{\clH}(T, -)\colon \clH\to \Modr S$ is exact and fully faithful.
\item The essential image of $ \Hom_{\clH}(T, -)$ is given by $\Ker  (\mathbb L G)^\perp\cap \Modr S.$
\item $H_T$ has a left adjoint $F$ given by $b\circ G$ where $G$ is the restriction to $\Modr S$ of the functor $\mathbb L G$ and $b$ is left adjoint of the inclusion functor $\iota\colon\clH\to \U$.
\item There is an equivalence $\clH\cong \Modr S[\Sigma^{-1}]$\\
 where $\Sigma= \{ g\in \Modr S\mid F(g) \textrm {\ is\ an\ isomorphism}\}.$\end{enumerate}
\end{prop}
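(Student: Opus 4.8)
The four statements are tightly linked, so the plan is to prove them essentially in the order (1), (2), (3), (4), feeding each into the next. For (1), the functor $H_T = \Hom_{\clH}(T,-)$ is exact because $T$ is a projective \emph{generator} of $\clH$ (Proposition~\ref{P:resol-dimension}): projectivity gives right-exactness (via property (3) of $\clH$, $\Ext^1_{\clH}(T,-)\cong\Hom_{\D(R)}(T,-[1])=0$), and left-exactness of $\Hom(T,-)$ is automatic; fully faithfulness of $H_T$ onto its image follows from the standard argument that $\Add T$ is equivalent to the projective objects of $\clH$ and $S=\End_{\clH}(T)$, so $H_T$ restricts to an equivalence $\Add T\to \add S_S$, and then since every object of $\clH$ has a presentation $T_1[0]\to T_0[0]\to X\to 0$ with $T_i\in\Add T$ (Proposition~\ref{P:resol-dimension} applied twice), a five-lemma / Yoneda-type argument upgrades this to full faithfulness on all of $\clH$. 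For (2), I would identify $H_T$ with the composite of the inclusion $\clH\hookrightarrow\D(R)$ followed by $\mathbb R H=\RHom_R(T,-)$: indeed, by Lemma~\ref{L:hom-in-degree-0}, for $X\in\clH$ the complex $\RHom_R(T,X)$ has cohomology concentrated in degree $0$ equal to $\Hom_{\D(R)}(T,X)=H_T(X)$, so $\mathbb R H|_{\clH}$ is (naturally isomorphic to) $H_T$ viewed in $\D(S)$ via the embedding $\Modr S\hookrightarrow\D(S)$. Since $\mathbb R H$ is fully faithful with essential image $\Ker(\mathbb L G)^\perp$ (Fact~\ref{F:good-tilting}(2)(ii),(iii)), the essential image of $H_T$ is exactly those modules $M\in\Modr S$ whose stalk complex lies in $\Ker(\mathbb L G)^\perp$, i.e. $\Ker(\mathbb L G)^\perp\cap\Modr S$.

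For (3), the left adjoint: $\mathbb R H\colon\D(R)\to\D(S)$ has left adjoint $\mathbb L G=-\Lotimes_S T$ by Fact~\ref{F:good-tilting}(2)(i). Restricting the source along $\Modr S\hookrightarrow\D(S)$ gives that $G:=\mathbb L G|_{\Modr S}\colon\Modr S\to\D(R)$ is left adjoint to $\mathbb R H|_{\clH}\cong H_T$ \emph{as a functor into $\D(R)$}; but $\mathbb L G$ lands in $\U$ (one checks $M\Lotimes_S T\in\U$ since it is computed by a projective resolution $P$ of $M$ tensored with $T$, giving a complex concentrated in non-positive degrees with terms in $\Add T\subseteq T^\perp$, hence in $\U$ by Lemma~\ref{L:U}), so $G$ factors through $\U$, and composing with $b\colon\U\to\clH$ (the left adjoint of $\iota\colon\clH\hookrightarrow\U$ from Remark~\ref{R:Saorin}) yields that $F=b\circ G$ is left adjoint to $H_T=\Hom_{\clH}(T,-)$ by composition of adjunctions; the unit/counit identities are formal. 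Finally (4) is a general nonsense consequence of (1)–(3): $H_T$ is fully faithful (1) with a left adjoint $F$ (3), so $\clH$ is equivalent to the localization of $\Modr S$ at the class $\Sigma$ of morphisms inverted by $F$ — this is the standard statement that a fully faithful right adjoint exhibits its source as the localization of the target at the maps sent to isomorphisms by the (left adjoint) reflector; one invokes the universal property of $\Modr S[\Sigma^{-1}]$ and checks that the induced functor $\Modr S[\Sigma^{-1}]\to\clH$ is essentially surjective (by (2), or directly since $F$ is, being a left adjoint whose right adjoint is fully faithful) and fully faithful.

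The main obstacle I expect is the careful bookkeeping in identifying $H_T$ with $\mathbb R H|_{\clH}$ \emph{compatibly with the adjunctions} — i.e. checking that the adjunction $(\mathbb L G,\mathbb R H)$ restricts and co-restricts correctly so that $b\circ G$ is genuinely the left adjoint of $H_T$ and not merely of $\iota\circ H_T$. Concretely one must verify that for $M\in\Modr S$ and $X\in\clH$,
\[
\Hom_{\clH}(b(G(M)),X)\;\cong\;\Hom_{\U}(G(M),\iota(X))\;\cong\;\Hom_{\D(R)}(M\Lotimes_S T,\,X)\;\cong\;\Hom_{\D(S)}(M,\RHom_R(T,X))\;\cong\;\Hom_{\Modr S}(M,H_T(X)),
\]
where the first isomorphism is the $(b,\iota)$-adjunction, the second uses $G(M)\in\U$, the third is Fact~\ref{F:good-tilting}(2)(i), and the last is Lemma~\ref{L:hom-in-degree-0} together with the fact that $\Modr S$ is a full subcategory of $\D(S)$ closed under the relevant Hom's into degree-zero complexes. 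Everything else is either a direct citation of earlier results or routine; the only real content is assembling this chain of natural isomorphisms and then reading off (4) from the abstract localization theory of reflective subcategories.
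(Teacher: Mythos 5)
Your overall architecture coincides with the paper's: items (2)--(4) are handled exactly as there. For (2) you identify $H_T$ with the restriction of $\RHom_R(T,-)$ to $\clH$ via Lemma~\ref{L:hom-in-degree-0} and invoke Fact~\ref{F:good-tilting}; for (3) you verify $G(M)\in\U$ by writing $M\Lotimes_ST$ as a projective resolution tensored with $T$ (Lemma~\ref{L:U}) and then run the very same chain $\Hom_S(M,H_T(X))\cong\Hom_{\D(S)}(M,\RHom_R(T,X))\cong\Hom_{\D(R)}(G(M),X)\cong\Hom_{\U}(G(M),X)\cong\Hom_{\clH}(b(G(M)),X)$ that the paper uses; and (4) is the same Gabriel--Zisman statement about a functor with a fully faithful right adjoint.

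The one step that does not work as written is your argument for full faithfulness in (1). You propose that $H_T$ restricts to an equivalence $\Add T\to \add S_S$ and that a five-lemma argument on presentations $T_1[0]\to T_0[0]\to X\to 0$ upgrades this to all of $\clH$. This projectivization argument requires $T$ to be small (self-small/compact) in $\clH$: for an infinitely generated tilting module $\Hom_{\clH}(T,-)$ need not commute with the infinite coproducts occurring in $\Add T$, so $H_T(T^{(\lambda)})=\Hom_R(T,T^{(\lambda)})$ is in general not $S^{(\lambda)}$ (and certainly not in $\add S_S$), and the surjectivity half of the comparison map $\Hom_{\clH}(T^{(\lambda)},Y)\to\Hom_S(H_T(T^{(\lambda)}),H_T(Y))$ fails for morphisms $T\to T^{(\lambda)}$ that do not factor through finite subcoproducts. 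Nor can you fall back on Gabriel--Popescu to get full faithfulness from $T$ being a generator, since that needs $\clH$ to be Grothendieck, which is precisely the property under investigation. The repair is already contained in your own treatment of (2): by Lemma~\ref{L:hom-in-degree-0}, $H_T$ is naturally isomorphic to the restriction to the full subcategory $\clH$ of $\RHom_R(T,-)$, which is fully faithful by Fact~\ref{F:good-tilting}~(ii); this is exactly the paper's proof of (1), so you should promote that identification to (1) and discard the Morita-style argument. The exactness claim (projectivity of $T$ in $\clH$, Proposition~\ref{P:-n-0-homology}) is fine as you state it.
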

\begin{proof} (1) The functor $\Hom_{\clH}(T, -)$ has image in $\Modr S$, by Lemma~\ref{L:hom-in-degree-0} and it is exact, since $T$ is a projective object of $\clH$, by Proposition~\ref{P:-n-0-homology}.
Let $X\in \clH$; we can assume that $X$ has terms in $T^\perp$. By Lemma~\ref{L:hom-in-degree-0} $\RHom _R(T, X)$ is  isomorphic to $\Hom_{\D(R)}(T, X)$ and, $\Hom_{\D(R)}(T, X)\cong\Hom_{\clH}(T, X)$, since $\clH$ is a full subcategory of $\D(R)$.
Thus the functor $H_T=\Hom_{\clH}(T, -)$ is isomorphic to the restriction at $\clH$ of the functor $\RHom_R(T, -)$, hence $H_T$ is fully faithful, by Fact~\ref{F:good-tilting}~(ii).

(2) Easily follows by Fact~\ref{F:good-tilting}~(iii) and Lemma~\ref{L:hom-in-degree-0}.
 
(3) The functor $\mathbb  L G$ is left adjoint to $\mathbb R H$ and for every right $S$-module $M_S$,  $\mathbb  L G(M)\cong P_M\otimes _ST$ where $P_M$ is a projective resolution of $M_S$. Thus, $\mathbb  L G(M)$ is isomorphic to a complex of $R$-modules with terms in $\Add T$ and in degrees $i\leq 0$. By Lemma~\ref{L:U} ,  $\mathbb  L G(M)\in \U$. By Remark~\ref{R:Saorin} the inclusion $\iota\colon \clH\to \U$ admits a left adjoint $b$. 
We show now that the functor $F=b\circ G$, where $G$ is the restriction of $\mathbb  L G$ to $\Modr S$, is left adjoint to $H_T$.
Let $X\in \clH$ and $M\in \Modr S$, then
\begin{align*}&\Hom_S(M, H_T(X))\cong \Hom_{\D(S)}(M, \RHom_R(T, X))\cong \Hom_{\D(R)}(G(M), X)\cong\\
&\cong \Hom_{\U} ((G(M), X)\cong \Hom_{\clH}(b(G(M), X).\end{align*}

(4) Follows by \cite[Proposition 1.3]{GZ67}. \end{proof}

The situation described by Proposition~\ref{P:left-adjoint} can be depicted by the following diagram:
 \[\xymatrix{\D(R)\ar@/_/[rr]_{\RHom _R(T, -)}&&\D(S)\ar@/_/[ll]_{ -\Lotimes_S T}\\
 \U\ar@{^(->}[u]\ar@/_1pc/[d]^b\\
 \clH\ar@{^(->}[u]_{\iota}\ar[rr]_{\Hom_{\clH}(T, -)}&&\Modr S\ar@{^(->}[uu]_{can}\ar@/_/[llu]_G
 }\]

By Proposition~\ref{P:left-adjoint}~(1) and (2), the functor $\Hom_{\clH}(T, -)$ induces an equivalence between $\clH$ and \[\Ker  (\mathbb L G)^\perp\cap \Modr S.\]

Fort the rest of this section we deal with our main concern which is to characterize the case in which the heart $\clH$ of the $t$-structure induced by a good $n$-tilting module is a Grothendieck category.
In Theorem~\ref{T:E-hereditary} and Theorem~\ref{T:S/A} we will give characterizations in terms of properties of subcategories of $\Modr S$.

 A first observation is obtained by an application of the Gabriel-Popesco's Theorem (\cite{GP}).
 
 \begin{prop}\label{P:GP} Let $F\colon \Modr S\to \clH$ be the left adjoint of the functor $\Hom_{\clH}(T, -)$ given by Proposition~\ref{P:left-adjoint}~(3).
  The following are equivalent:
  \begin{enumerate}
  \item $\clH$ is a Grothendieck category;
  \item $F$ is an exact functor;
  \item $\Ker F$ is a hereditary torsion class in $\Modr S$.
  \end{enumerate}
  \end{prop}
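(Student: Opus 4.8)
The plan is to exploit the adjunction $(F, H_T)$ from Proposition~\ref{P:left-adjoint}, where $H_T = \Hom_{\clH}(T,-)$ is exact and fully faithful, together with the Gabriel--Popescu setup. The functor $F$ is automatically right exact (being a left adjoint), and since $H_T$ is fully faithful the counit $F H_T \to \id_{\clH}$ is an isomorphism; hence $F$ is a localization functor onto $\clH$, and $H_T$ identifies $\clH$ with the full subcategory of $\Modr S$ on which the unit is an isomorphism. Because $T$ is a projective generator of $\clH$ (Proposition~\ref{P:resol-dimension}), $\Hom_{\clH}(T,-)$ reflects exactness, so $F$ is exact if and only if it preserves monomorphisms, if and only if $\Ker F$ is closed under subobjects, if and only if $\Ker F$ is a hereditary torsion class (it is already closed under quotients, extensions and coproducts, being the kernel of a coproduct-preserving right exact functor).

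The main work is the equivalence with (1). For (2) $\Rightarrow$ (1): if $F$ is exact, then $\clH \simeq \Modr S / \Ker F$ is a Serre quotient of a module category by a hereditary torsion class, hence a Grothendieck category by the classical Gabriel localization theory; concretely, $F$ then has a right adjoint $H_T$ which is exact on the torsion-free part and one gets all the Grothendieck axioms (exact filtered colimits and a generator, namely $F(S)$) transported along the localization. For (1) $\Rightarrow$ (2): suppose $\clH$ is Grothendieck. One knows $\clH$ is cocomplete (this already follows from the derivator equivalence $\D(\clH) \simeq \D(R)$ of Theorem~\ref{T:derivator-equiv}, or directly — though note coproducts in $\clH$ need not agree with those in $\D(R)$). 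Then $F$, being a coproduct-preserving functor from $\Modr S$ to the Grothendieck category $\clH$ which is essentially surjective (indeed $F H_T \cong \id$) and whose kernel is closed under quotients/extensions/coproducts, realizes $\clH$ as the quotient $\Modr S/\Ker F$; Gabriel--Popescu (or rather its converse direction: a localization of a module category at a right-exact coproduct-preserving functor whose target is Grothendieck forces the kernel to be localizing) gives that $\Ker F$ is a hereditary torsion class, equivalently $F$ is exact.

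The step I expect to be the genuine obstacle is (1) $\Rightarrow$ (3): namely, showing that the Grothendieck condition forces $\Ker F$ to be closed under \emph{subobjects}, not merely under quotients and extensions. The point is that exactness of filtered colimits in $\clH$ must be leveraged to control kernels in $\Modr S$. I would argue as follows. Given $0 \to M' \to M \to M'' \to 0$ in $\Modr S$ with $M \in \Ker F$, apply $F$ to get an exact sequence $F(M') \to F(M) = 0 \to F(M'') \to 0$, so it suffices to show $F(M') = 0$. Write $M'$ as the filtered colimit of its finitely generated submodules $M'_\lambda$; since $F$ preserves filtered colimits and, in a Grothendieck category, filtered colimits are exact, $F(M')= \varinjlim F(M'_\lambda)$ and one reduces to the case $M'$ finitely generated. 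Then $M'$ is a submodule of $M$, and one uses that $M \in \Ker F$ means (via $F H_T \cong \id$ and the description of the essential image in Proposition~\ref{P:left-adjoint}~(2)) that $\mathbb{L}G(M)$ lies in $\Ker(\text{truncation to } \clH)$; a direct-limit / resolution argument as in Lemma~\ref{L:counit} then shows the submodule $M'$ inherits this, whence $F(M')=0$. This is the delicate homological bookkeeping; everything else is formal adjunction yoga plus the standard Gabriel correspondence between hereditary torsion classes and Grothendieck localizations. Finally I would remark that the equivalence (2) $\Leftrightarrow$ (3) is purely formal from the preceding discussion: $F$ right exact and coproduct-preserving, so $F$ exact $\iff$ $F$ left exact $\iff$ $F$ preserves monos $\iff \Ker F$ closed under subobjects $\iff \Ker F$ hereditary torsion.
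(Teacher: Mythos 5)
Your outline gets the easy parts right, but it declares the genuinely delicate implication to be formal. The step you assert without proof --- ``$\Ker F$ closed under subobjects $\Rightarrow$ $F$ preserves monomorphisms'' --- is not adjunction yoga, and as a statement about right exact, coproduct-preserving functors it is simply false: over $S=k[x]/(x^2)$ the functor $-\otimes_S k$ is right exact, preserves coproducts, and kills only the zero module (so its kernel is trivially a hereditary torsion class), yet it is not exact. Nor does being a reflector rescue the claim for free: the torsion-free reflection $A\mapsto A/t(A)$ on abelian groups is a left adjoint to a fully faithful inclusion whose kernel is the hereditary torsion class of torsion groups, and applying it to $0\to p\bbZ\to\bbZ\to\bbZ/p\bbZ\to 0$ destroys exactness. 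So to get (3) $\Rightarrow$ (2) (equivalently (3) $\Rightarrow$ (1)) you must use the specific structure of this adjunction, and your text never does. This is exactly where the paper works: it invokes Proposition~\ref{P:left-adjoint}~(4), $\clH\simeq \Modr S[\Sigma^{-1}]$ with $\Sigma$ the morphisms inverted by $F$, and Gabriel's description (\cite[Ch.~III]{G62}) of the morphisms inverted when $\Ker F$ is a hereditary torsion class, to identify $\clH$ with the Serre quotient $\Modr S/\Ker F$, which is Grothendieck; exactness of $F$ is then recovered from (1) $\Rightarrow$ (2). In your write-up the implication from (3) back to (1) or (2) is therefore unproved: what you actually establish is (1) $\Leftrightarrow$ (2) and (2) $\Rightarrow$ (3).

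A secondary point: you located the difficulty in the wrong place. (1) $\Rightarrow$ (3) needs no ``delicate homological bookkeeping'': it is the composite of Gabriel--Popescu ((1) $\Rightarrow$ (2), using that $T$ is a generator of $\clH$ by Proposition~\ref{P:resol-dimension} --- this part of your argument agrees with the paper) with the formal (2) $\Rightarrow$ (3). Your attempted direct argument for (1) $\Rightarrow$ (3) --- reduction to finitely generated submodules followed by an unspecified ``resolution argument as in Lemma~\ref{L:counit}'' showing that submodules of objects of $\Ker F$ stay in $\Ker F$ --- is not a proof; identifying $\Ker F$ with $\E$ under hypothesis (1) is genuine work, carried out in Proposition~\ref{P:kerF=E}, and in any case this direction is redundant. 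Your (2) $\Rightarrow$ (1), realizing $\clH$ as $\Modr S/\Ker F$ once exactness of $F$ is assumed, is fine and parallels the paper's quotient argument, but it does not close the missing implication above.
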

  \begin{proof} (1) $\Rightarrow $ (2) follows from Gabriel-Popesco's Theorem~\cite{GP}, since $T$ is a generator of $\clH$ by Proposition~\ref{P:resol-dimension}.
    
  (2) $\Rightarrow $ (3) From (2) it follows that $\Ker F$ is a Serre subcategory of $\Modr S$, that is for every short exact sequence 
  $0\to N\to M \to L\to 0$ in $\Modr S$, $M\in \Ker F$ if and only if $N$ and $L$ are in $\Ker F$. Since $F$ is a left adjoint, it sends coproduces in $\Modr S$ to coproducts in $\clH$. So $\Ker F$ is a hereditary torsion class.
  
  (3) $\Rightarrow $ (1) As in Proposition~\ref{P:left-adjoint}, let \[\Sigma= \{ g\in \Modr S\mid F(g) \textrm {\ is\ an\ isomorphism}\}. \] When $\Ker F$ is a hereditary torsion class, then by \cite[Chap. III]{G62}, $g\in \Sigma$ if and only if $\Ker g$ and  $\Coker g$ belong to $\Ker F$. Thus, \[\Modr S[\Sigma^{-1}]\cong\Modr S/\Ker  F\] and the latter category is well known to be a Grothendieck category.
   The conclusion follows by Proposition~\ref{P:left-adjoint}~(4).
  \end{proof}

If $\C$ is a subcategory of an abelian category $\A$, its perpendicular category, denoted by $\C_{\perp}$, is defined by:
\[\C_{\perp}=\{X\in \A\mid \Hom_{\A}(C, X)=\Ext^1_{\A}(C, X)=0, \textrm{ for\ all\ } C\in \C\}.\]

We define also 
\[\C_{\perp_\infty}=\{X\in \A\mid \Ext^i_{\A}(C, X)=0, \textrm{ for\ all\ } C\in \C\ \textrm{ for\ all\ }i\geq 0\}.\]
\begin{defn}\label{D:E} If $T_R$ is a good $n$-tilting module with endomorphism ring $S$ we let \[\E=\{M_S\in \Modr S\mid \Tor^S_i(M, T)=0, \textrm{\ for\ all\ } i\geq 0\}.\]
 \end{defn}

\begin{lem}\label{L:perp-infinity} 
The essential image of the functor $\Hom_{\clH}(T, -)$ is contained in $\E_{\perp_\infty}$ and they coincide in case $\E$
 is a hereditary torsion class in $\Modr S$.
\end{lem}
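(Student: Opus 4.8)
The plan is to unravel the definitions on both sides and play the adjunction $(\mathbb{L}G,\mathbb{R}H)$ against the explicit description of $\E$. First I would recall from Proposition~\ref{P:left-adjoint}~(1) that $\Hom_{\clH}(T,-)$ is (naturally isomorphic to the restriction of) $\RHom_R(T,-)$ and, for a complex $X\in\clH$ with terms in $T^\perp$, simply equals $\Hom_R(T,X)$ concentrated in degree $0$. So fix $X\in\clH$ and set $N=\Hom_{\clH}(T,X)=H_T(X)\in\Modr S$. Given $M\in\E$, I want $\Ext^i_S(M,N)=0$ for all $i\geq 0$. The adjunction gives, for a projective resolution $P_M$ of $M$,
\[
\Hom_{\D(S)}(P_M[i],\RHom_R(T,X))\cong\Hom_{\D(R)}(\mathbb{L}G(M)[i],X)=\Hom_{\D(R)}((P_M\otimes_S T)[i],X),
\]
and $\Ext^i_S(M,N)\cong\Hom_{\D(S)}(M[i],\RHom_R(T,X))$ because $\RHom_R(T,X)$ has cohomology concentrated in degree $0$, equal to $N$ (so Hom into it in $\D(S)$ from $M[i]$ computes $\Ext^i_S(M,N)$; for $i<0$ it vanishes trivially and $X\in\clH$ also forces $i>n$ to vanish). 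Now $M\in\E$ means $H^j(P_M\otimes_S T)=\Tor^S_j(M,T)=0$ for all $j$, i.e. $\mathbb{L}G(M)\cong 0$ in $\D(R)$, hence $\Hom_{\D(R)}(\mathbb{L}G(M)[i],X)=0$ for every $i$. Chasing this back through the displayed isomorphism yields $\Ext^i_S(M,N)=0$ for all $i\geq0$, i.e. $N\in\E_{\perp_\infty}$. This proves the inclusion of the essential image of $H_T$ in $\E_{\perp_\infty}$.

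For the reverse inclusion under the hypothesis that $\E$ is a hereditary torsion class, I would argue as follows. By Proposition~\ref{P:left-adjoint}~(2) the essential image of $H_T$ is $\Ker(\mathbb{L}G)^\perp\cap\Modr S$, and note $\Ker(\mathbb{L}G)\cap\Modr S=\E$ by definition of $\E$. So it suffices to show: if $N\in\E_{\perp_\infty}$ then $\Hom_{\D(S)}(Y,N)=0$ for every $Y\in\Ker(\mathbb{L}G)$ (regarding $N$ as a stalk complex). The standard device is that when $\E$ is a hereditary torsion class in $\Modr S$, every object $Y$ of $\Ker(\mathbb{L}G)\subseteq\D(S)$ has all its cohomology modules $H^k(Y)$ in $\Ker(\mathbb{L}G)\cap\Modr S=\E$: indeed $\mathbb{L}G(Y)\cong Y\Lotimes_S T$, and using that $\E$ is closed under submodules, quotients, extensions and coproducts (together with the fact that $\mathbb{L}G$ commutes with coproducts and the behaviour of $\Lotimes_S T$ on the hypercohomology spectral sequence / the truncation triangles of $Y$), one deduces $H^k(Y)\in\E$ for all $k$. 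Then, filtering $Y$ by its truncations and using the long exact sequences obtained by applying $\Hom_{\D(S)}(-,N)$ to the truncation triangles, together with $\Hom_{\D(S)}(H^k(Y)[m],N)\cong\Ext^{-m}_S(H^k(Y),N)=0$ for all $m$ (since $H^k(Y)\in\E$ and $N\in\E_{\perp_\infty}$), one gets $\Hom_{\D(S)}(Y,N)=0$. Hence $N\in\Ker(\mathbb{L}G)^\perp\cap\Modr S$, i.e. $N$ lies in the essential image of $H_T$.

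The step I expect to be the main obstacle is the claim that when $\E$ is a hereditary torsion class, the cohomology modules of every object in $\Ker(\mathbb{L}G)$ again lie in $\E$. This is where the ``hereditary torsion class'' hypothesis is genuinely used and where one must be careful: a priori $\E$ is only defined by the vanishing of \emph{all} $\Tor^S_i(-,T)$, so it is automatically closed under extensions and, since $\Tor$ commutes with direct limits, under coproducts and direct limits; but closure under submodules and quotients — equivalently, that it is a Serre subcategory, hence (with coproducts) a hereditary torsion class — is exactly the extra assumption. Granting that, the passage from ``$Y\Lotimes_S T\cong 0$'' to ``each $H^k(Y)\in\E$'' can be handled either by the convergent spectral sequence $\Tor^S_p(H^q(Y),T)\Rightarrow H^{-p+q}(Y\Lotimes_S T)$ and an induction on the position along the $\E$-torsion/torsion-free decompositions of the $H^q(Y)$, or more cleanly by a dévissage on the (co)homological width combined with the truncation triangles; either way the torsion-theoretic closure properties of $\E$ are what make the induction go through. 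Everything else — the adjunction isomorphisms, the reduction of $\Ext^i_S$ to Homs in $\D(S)$ using the degree-$0$ concentration, and the translation between $\Ker(\mathbb{L}G)^\perp$ and $\E_{\perp_\infty}$ — is formal.
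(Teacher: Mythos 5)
Your first half (the inclusion of the essential image in $\E_{\perp_\infty}$) is correct and is essentially the paper's argument: $E[j]\in\Ker(\mathbb L G)$ for $E\in\E$, so Proposition~\ref{P:left-adjoint}~(2) gives $\Hom_{\D(S)}(E[j],M)=0\cong\Ext^{-j}_S(E,M)$.

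The reverse inclusion is where there is a genuine gap. You reduce to showing $\Hom_{\D(S)}(Y,N)=0$ for every $Y\in\Ker(\mathbb L G)$, and you propose to do this by ``filtering $Y$ by its truncations'' and running long exact sequences. This dévissage works for bounded complexes (induction on length) and for complexes bounded \emph{below}, where $Y$ is a homotopy \emph{colimit} of its bounded truncations and $\Hom_{\D(S)}(-,N)$ turns the Milnor triangle $\coprod Z_i\to\coprod Z_i\to Y\to$ into something usable. But a general object of $\Ker(\mathbb L G)$ is unbounded, and for the part unbounded to the left the complex is only a homotopy \emph{limit} of its bounded truncations; $\Hom_{\D(S)}(-,N)$ gives no exact sequence controlling $\Hom$ out of a homotopy limit by the $\Hom$'s out of its stages. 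The paper explicitly flags this (``the triangle of the homotopy limit doesn't help to conclude'') and resolves the bounded-above case by a completely different device: it takes the pure injective right $S$-module $C=T^d=\Hom_R(T,W)$, notes $\E\subseteq{}^\perp C$ via $\Ext^i_S(N,C)\cong[\Tor^S_i(N,T)]^d$, and uses the model structure of Theorem~\ref{T:C-model-structure} induced by the complete cotorsion pair $({}^\perp C,({}^\perp C)^\perp)$. A bounded-above complex $Y$ with terms in $\E$ is cofibrant there, a fibrant replacement $I$ of $M$ is a bounded-below $({}^\perp C)^\perp$-coresolution, and Corollary~\ref{C:hom-in-homotopy-dual} reduces $\Hom_{\D(S)}(Y,M)$ to $\Hom_{\K(S)}(Y,I)$, which vanishes by a direct degree-zero computation using $\Hom_S(\E,M)=0$. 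Your proposal contains no substitute for this step. A secondary soft spot: you derive ``$H^k(Y)\in\E$'' from a hyper-Tor spectral sequence, which you yourself call the main obstacle; the paper instead invokes the structural fact that objects of $\Ker(\mathbb L G)$ are quasi-isomorphic to complexes with \emph{terms} in $\E$ (from \cite{CX} and \cite{BMT}), and then hereditariness gives cycles and boundaries in $\E$ for free.
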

\begin{proof} Let $M\cong \Hom_{\clH}(T, X)$ for some $X\in\clH$. For every $E\in \E$ and every $j\in \mathbb Z$, $E[j]$ belongs to $\Ker (\mathbb L G)$,  hence by Proposition~\ref{P:left-adjoint}~(2), $\Hom_{\D(S)}(E[j], M)=0$. But $\Hom_{\D(S)}(E[j], M)\cong\Ext^{-j}_S(E, M)$, for every $j\in \mathbb Z$, hence $M\in\E_{\perp_{\infty}}$.

To prove the other statement we have to show that, if $\E$ is hereditary and $M\in\E_{\perp_{\infty}}$,  then $\Hom_{\D(S)}(Y, M)=0$ for every $Y\in \Ker (\mathbb L G)$.
Note that a complex belongs to $\Ker  (\mathbb L G)$ if and only if it is quasi isomorphic to a complex with terms in $\E$: This has been proved in \cite[Proposition 4.6]{CX} for the case of a good $1$-tilting module, but using \cite[Lemma 1.5]{BMT} everything goes through for the $n>1$ case. The cycles and the boundaries of every complex with terms in $\E$ are again in $\E$, since we are assuming that $\E$ is a hereditary torsion class.

(a)  We first prove that if $Z$ is a bounded complex with terms in $\E$ and $M\in \E_{\perp_\infty}$, then $\Hom_{\D(S)}(Z, M)=0$.
We make induction on the number $k$ of non-zero terms in $Z$. If $k=1$, $Z$ is of the form $E[j]$ for some $j\in \mathbb Z$ and some $E\in \E$. Thus, $\Hom_{\D(S)}(E[j], M)\cong \Ext^{-j}_S(E, M)=0$. Let now $Z=0\to E^i\to E^{i+1}\to\dots\to E^{i+k}\to 0 $ and let $K$ be the kernel of the $i^{\textrm{th}}$-differential of $Z$. Since $\E$ is hereditary,  we have a triangle  $K\to Z\to Z'\to K[1]$ where $K\in \E$ and $Z'$ is a bounded complex with at most $k-1$-terms in $\E$. Thus, by induction $\Hom_{\D(S)}(Z', M)=0$, hence also $\Hom_{\D(S)}(Z, M)=0$.

(b) Let now $Y\in \Ker  (\mathbb L G)$ be a bounded below complex. Then $Y$ is a homotopy colimit of its trunctation subcomplexes $Z_n$ which are bounded and with terms in $\E$, again by the hereditary condition on $\E$. Hence, from the triangle $\coprod_iZ_i\to\coprod_iZ_i\to Y\to \coprod_iZ_i[1]$ and by (a) we conclude that $\Hom_{\D(S)}(Y, M)=0$.
It remains to consider the case of a bounded above complex $Y\in \Ker  (\mathbb L G)$. (Note that $Y$ is a homotopy limit of its quotient complexes obtained from truncations, which are bounded and with terms in $\E$, but the triangle of the homotopy limit doesn't help to conclude).

(c) To prove that  $\Hom_{\D(S)}(Y, M)=0$ for a bounded above complex, we consider a suitable model structure on $\D(S)$ described as follows. Let $W$ be an injective cogenerator of $\Modr R$ and let $(-)^d=\Hom_R(-, W)$ denote the dual on any right $R$-module. Then $T^d=C$ is a pure injective right $S$-module and by well known homological formulas we have $\Ext^i_S(N, C)\cong [\Tor^S_i(N, T)]^d$, for every right $S$-module $N$ and every $i\geq 0$. Hence $\E\subseteq {}^\perp C$. We consider the model structure on $\D(S)$ induced by the complete cotorsion pair $(^\perp C, (^\perp C)^\perp)$  as described in Theorem~\ref{T:C-model-structure}.
Let $Y$ be a bounded above complex with terms in $\E$ (hence $Y \in \Ker  (\mathbb L G)$). By Theorem~\ref{T:C-model-structure}, $Y$ is a cofibrant object in the model structure induced by $C$. For every $N\in \Modr S$ a fibrant replacement of $N$ is a $(^\perp C)^\perp$-coresolution of $N$ constructed by taking special $(^\perp C)^\perp$-preenvelopes. Now, let $M\in\E_{\perp_{\infty}}$ and let $I$ be its fibrant replacement. By Corollary~\ref{C:hom-in-homotopy-dual}, $\Hom_{\D(S)}(Y, M)\cong \Hom_{\K(S)}(Y, I)$ and by (a) and the hereditary condition on $\E$ we can assume that $Y$ has non zero terms only in degrees $i\leq 0$. If $f\colon Y\to I$ is a cochain map we have
 \[\xymatrix{0\ar[r]&\dots\ar[r]&Y^{-2}\ar[r]&Y^{-1}\ar[r]\ar[d]&Y^0\ar[r]\ar[d]_{f^0}&0\ar[d]\ar[r]&\dots\\
 &&\dots\ar[r]&0\ar[r]&I^0\ar[r]^{d^0}& I^1\ar[r]&I^2\ar[r]&\dots },\]
where $\Img f^0\subseteq \Ker d^0=M$. Since $\Hom_S(E, M)=0$ for every $E\in \E$, we conclude that  $f^0=0$ and thus also $\Hom_{\D(S)}(Y, M)=0$.
\end{proof}

\begin{prop}\label{P:kerF=E} 
Assume that $\clH$ is a Grothendieck category .Let $\E$ be  as in Definition~\ref{D:E} and let $F$ be the left adjoint of the functor $\Hom_{\clH}(T, -)$ given by Proposition~\ref{P:left-adjoint}~(3). Then, the following hold true:
\begin{enumerate}
\item $\Ker F$ is a hereditary torsion class and $\Ker F=\E$.
\item $(\Ker F)_{\perp}$ is the essential image of the functor $\Hom_{\clH}(T, )\colon \clH \to \Modr S.$
\item $\E_{\perp_\infty}=\E_{\perp}=(\Ker F)_{\perp}$.
\end{enumerate}
 \end{prop}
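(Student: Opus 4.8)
The plan is to prove the three assertions in sequence, leveraging the Gabriel--Popescu machinery already set up in Proposition~\ref{P:GP} together with the description of $\E_{\perp_\infty}$ from Lemma~\ref{L:perp-infinity}. Since $\clH$ is assumed Grothendieck, Proposition~\ref{P:GP} tells us immediately that $\Ker F$ is a hereditary torsion class; so for (1) it only remains to identify $\Ker F$ with $\E$. First I would compute $F$ on a right $S$-module $M$: by Proposition~\ref{P:left-adjoint}~(3), $F(M)=b(G(M))$ where $G(M)\cong P_M\otimes_S T$ with $P_M$ a projective resolution of $M$, so $G(M)\simeq M\Lotimes_S T$ in $\D(R)$. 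The point is that $F(M)=0$ in $\clH$ if and only if $b(G(M))=0$, and since $b$ is the left adjoint of the inclusion $\iota\colon\clH\to\U$ realized via the BBD truncation functor, $b(G(M))=0$ forces $G(M)$ to have no morphisms into any object of $\clH$; combined with the fact that $G(M)\in\U$ and $\Hom_{\D(R)}(G(M),X)\cong\Hom_{\clH}(b(G(M)),X)$ for $X\in\clH$, we get $F(M)=0$ iff $\Hom_{\D(R)}(M\Lotimes_S T,X)=0$ for all $X\in\clH$. Using the adjunction $(\mathbb{L}G,\mathbb{R}H)$ of Fact~\ref{F:good-tilting}, this is equivalent to $\mathbb{R}H(X)=\RHom_R(T,X)$ being right-orthogonal to $M\Lotimes_S T$ in $\D(S)$; but the essential image of $\mathbb{R}H$ restricted to $\clH$ is exactly $H_T(\clH)$, and one shows $M\in\Ker F$ iff $M\Lotimes_S T=0$, i.e. $\Tor^S_i(M,T)=0$ for all $i\ge 0$, which is the definition of $\E$. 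The containment $\E\subseteq\Ker F$ is clear; the reverse containment uses that $\clH$ generates $\D(R)$ (Theorem~\ref{T:derivator-equiv} or the fact that $T$ is a generator), so $\Hom_{\D(R)}(G(M),X)=0$ for all $X\in\clH$ forces $G(M)\cong 0$.

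For (2), I would argue as follows. Since by (1) $\Ker F=\E$ is a hereditary torsion class, the hypotheses of the second half of Lemma~\ref{L:perp-infinity} are met, so the essential image of $H_T=\Hom_{\clH}(T,-)$ equals $\E_{\perp_\infty}$. It then suffices to show $\E_{\perp_\infty}=\E_\perp=(\Ker F)_\perp$; but this is precisely statement (3), so (2) follows from (3) together with Lemma~\ref{L:perp-infinity}. Thus the real content is (3).

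For (3): the inclusion $\E_{\perp_\infty}\subseteq\E_\perp$ is trivial (more vanishing conditions imply fewer). For the reverse, suppose $M\in\E_\perp$, i.e. $\Hom_S(E,M)=\Ext^1_S(E,M)=0$ for all $E\in\E$. I want to deduce $\Ext^i_S(E,M)=0$ for all $i\ge 0$. The key is that $\E$ is a \emph{hereditary} torsion class and, crucially, that it is the kernel of $F=b\circ G$ where $G(M)\cong M\Lotimes_S T$ and $T$ has finite projective dimension $\le n$ both as a right $R$-module and (by Fact~\ref{F:good-tilting}~(a)) as a left $S$-module with a finite finitely generated projective resolution. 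This finiteness lets me do dimension shifting: given $E\in\E$, I resolve $E$ and, since $\E$ is closed under subobjects and quotients (hereditary torsion class), the relevant syzygies land back in $\E$, so $\Ext^{i+1}_S(E,M)\cong\Ext^1_S(\Omega^i E,M)=0$ once I know $\Ext^1$ vanishes on all of $\E$ — but wait, I only know $\Ext^1_S(E,M)=0$, and syzygies of $E$ need not be in $\E$ in general. Here I expect the argument to instead go through the perpendicular-category description: $\E_\perp=(\Ker F)_\perp$ is by general torsion-theory nonsense (\cite{GL} or \cite{G62}) equivalent to $\Modr S/\E$, and this quotient category is itself abelian of the right homological type; combined with the fact, extractable from Proposition~\ref{P:left-adjoint} and (1), that $\Hom_{\clH}(T,-)$ identifies $\clH$ with $\E_\perp$ and that $\clH$ — being equivalent to a localization with $\D(\clH)\simeq\D(R)$ via Theorem~\ref{T:derivator-equiv} — has the property that all higher $\Ext^i_S(E,-)$ automatically vanish on objects of $\E_\perp$ because these come from genuine objects of $\clH$. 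So the cleanest route is: $M\in\E_\perp$ $\Rightarrow$ $M$ lies in the image of $H_T$ (this is the standard fact that $(\Ker F)_\perp$ is the localizing subcategory image when $\Ker F$ is hereditary torsion) $\Rightarrow$ by Lemma~\ref{L:perp-infinity} $M\in\E_{\perp_\infty}$.

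\medskip
\noindent\textbf{Main obstacle.} The delicate point is the middle implication in (3): showing that $M\in\E_\perp$ already forces $M$ to be in the essential image of $H_T$, i.e. that $\E_\perp\subseteq H_T(\clH)$, without yet knowing $M\in\E_{\perp_\infty}$. The natural argument is that since $\E=\Ker F$ is a hereditary torsion class, $\Modr S/\E$ embeds into $\Modr S$ as the full subcategory $\E_\perp$ (this is classical Gabriel localization theory, e.g. \cite[Chap.~III, \S2--3]{G62}), and on the other hand Proposition~\ref{P:left-adjoint}~(4) together with (1) gives $\clH\cong\Modr S[\Sigma^{-1}]\cong\Modr S/\E$; chasing these equivalences shows the essential image of $H_T$ is exactly $\E_\perp$. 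Once this identification is in hand, Lemma~\ref{L:perp-infinity} (applicable since $\E$ is now known to be a hereditary torsion class) upgrades membership in $\E_\perp$ to membership in $\E_{\perp_\infty}$, closing the chain $\E_{\perp_\infty}\subseteq\E_\perp\subseteq H_T(\clH)=\E_{\perp_\infty}$ and proving all three items simultaneously. I would double-check that the equivalence $\clH\simeq\Modr S/\E$ is compatible with the two functors to $\Modr S$ (the localization functor versus $H_T$), as this compatibility is what makes the essential images agree.
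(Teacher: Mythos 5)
Your handling of (2) and (3) is essentially the paper's own route: Gabriel's localization theory (\cite[Ch.~III]{G62}) identifies $(\Ker F)_{\perp}$ with the essential image of the section functor, the compatibility you flag at the end is settled exactly as in the paper by uniqueness of adjoints (the section functor is naturally isomorphic to $H_T$ composed with the inverse of the induced equivalence), and the inclusion of the image of $H_T$ into $\E_{\perp_\infty}$ is Proposition~\ref{P:left-adjoint}~(2) (equivalently the first half of Lemma~\ref{L:perp-infinity}), so the chain $\E_{\perp_\infty}\subseteq\E_{\perp}=(\Ker F)_{\perp}=\mathrm{Im}\,H_T\subseteq\E_{\perp_\infty}$ closes once (1) is available. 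The genuine gap is in part (1), namely the inclusion $\Ker F\subseteq\E$.

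Your argument there is: $F(M)=0$ gives $\Hom_{\D(R)}(G(M),X)=0$ for all $X\in\clH$, and since ``$\clH$ generates $\D(R)$'' this forces $G(M)\cong 0$, i.e. $M\Lotimes_S T=0$. That inference is false: left-orthogonality to the heart \emph{without shifts} does not imply vanishing. For instance $\Hom_{\D(R)}(T[1],X)=0$ for every $X\in\clH$ by the very description of $\clH$ in Theorem~\ref{T:n-tilting-t-structure}, yet $T[1]\neq 0$; more generally every object of $\U[1]$ is left-orthogonal to $\clH$. In fact, since $b(W)=0$ for $W\in\U$ exactly when $W\in\U[1]$, your hypothesis $F(M)=0$ says precisely that $G(M)\in\U[1]$, i.e. $\Hom_{\D(R)}(T[j],G(M))=0$ for all $j\leq 0$; it leaves open the possibility that $G(M)$ is a nonzero object sitting ``one step to the left'' (e.g. with $\Tor^S_j(M,T)\neq 0$ for some $j\geq 1$), and to exclude it you would need orthogonality to all shifts $\clH[i]$, which you do not have. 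Note also that your argument for $\Ker F=\E$ nowhere uses the Grothendieck hypothesis beyond the hereditariness statement of Proposition~\ref{P:GP} --- a warning sign, since ruling out exactly this phenomenon is where exactness of $F$ must enter. The paper's proof does so: for $M\in\Ker F$ it applies the exact functor $F$ to a projective resolution $P_M$ of $M$; since $F(M)=0$, $F(P^{-i})\cong P^{-i}\otimes_S T\in\Add T$ and $F(d^{-i})$ is identified with $d^{-i}\otimes_S 1_T$, one deduces that the complex $P_M\otimes_S T$ is acyclic, hence $\Tor^S_i(M,T)=0$ for all $i\geq 0$ and $M\in\E$. Some argument of this kind, exploiting exactness of $F$, is needed to make (1) --- and with it your (2) and (3) --- go through.
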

 \begin{proof}
 (1) $\Ker F$ is a hereditary torsion class by Proposition~\ref{P:GP}. The inclusion $\E\subseteq \Ker F$ is immediate, since if $M\in \E$, then $\mathbb L G(M)=0$, hence $F(M)=0$.
 
 For the converse, let $M\in \Ker F$ and let \[P_M= \dots P^{-i}\overset{d^{-i}}\to P^{-i+1}\to \dots \to P^{-1}\overset{d^{-1}}\to P^0\to M\to 0\] be a projective resolution of $M$. By Proposition~\ref{P:GP},  $F$ is an exact functor, hence the sequence 
 \[\dots F(P^{-i})\overset{F(d^{-i})}\longrightarrow F(P^{-i+1})\to \dots \to F(P^{-1})\overset{F(d^{-1})}\longrightarrow F(P^0)\to F(M)=0\to 0\] is exact in $\clH$ and $F(P^{-i})\cong P^{-i}\otimes_ST$ belongs to $\Add T$, so that $F(d^{-i})$ is naturally isomorphic to $d^{-i}\otimes_S1_T$. We infer that $P_M\otimes_ST=0$, hence $M\in \E$.
 
 (2) By (1) and by \cite[Chap III]{G62}, the canonical quotient functor $q\colon \Modr S\to \Modr S/\Ker F$ is exact and it admits a fully faithful right adjoint $a$ whose essential image is the perpendicular category $(\Ker F)_{\perp}$ consisting of the closed objects. By Proposition~\ref{P:left-adjoint} we have the following diagram
  \[\xymatrix{\clH\ar@/_/[rr]_{\Hom _{\clH}(T, -)}&&\Modr S\ar@/_/[ll]_{ F}\ar@/_/[d]^q\\
&&\Modr S/\Ker F\ar@/^1pc/[llu]^{F'}\ar@/_/[u]_{a} }\]
 where $F'$ is the unique functor such that $F'\circ q= F$ and $F'$ is an equivalence of categories. Let $D$ be an inverse of $F'$; then $(F', D)$ is an adjoint pair, so that $(F, aD)$ is an adjoint pair. Hence, the functor $H_T=\Hom_{
\clH}(T, -)$ is naturally isomorphic to the functor $aD$ and thus also $a\cong H_T\circ F'$. We conclude that the essential images of $a$ and $H_T$ coincides and so they coincide with $(\Ker F)_{\perp}$.

(3) Clearly $\E_{\perp_\infty}\subseteq \E_{\perp}$. In view of conditions (1) and (2), to show the reverse inclusion it is enough to prove that the essential image of the functor $\Hom_{\clH}(T, )$ is contained in $\E_{\perp_\infty}$. Let $M_S$ be of the form $\Hom_{\clH}(T, X)$ for some $X\in \clH$. By Proposition~\ref{P:left-adjoint}~(2) we have that $\Hom_{\D(S)}(Y, M)=0$ for every $Y\in \Ker (\mathbb L G)$. In particular, for every $E\in \E$, $\Hom_{\D(S)}(E[j], M)=0$, for every $j\in \mathbb Z$; hence $\Ext^j_S(E, M)=0$ for every $j\geq 0$, that is $M\in \E_{\perp_\infty}$.
\end{proof}

\begin{thm}\label{T:E-hereditary} 
Let $\E$ be  as in Definition~\ref{D:E}. The following are equivalent:
\begin{enumerate}
\item $\clH$ is a Grothendieck category.
\item $\E$ is a hereditary torsion class in $\Modr S$ and $\E_{\perp}=\E_{\perp_\infty}$.
\item For every $M\in \Modr S$, $\mathbb L G(M)\in \clH$.
\item If $G$ is the restriction of  $\mathbb L G $ to $\Modr S$, then $G$ is naturally isomorphic to the left adjoint $F$ of the fully faithful functor $\Hom_{\clH}(T, -)\colon \clH\to \Modr S$ constructed in Proposition~\ref{P:GP} and $G$ is an exact functor.
\end{enumerate}
\end{thm}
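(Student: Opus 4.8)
The plan is to prove the chain of equivalences $(1)\Leftrightarrow(2)\Leftrightarrow(3)\Leftrightarrow(4)$ by assembling the preparatory results of this section, with the genuinely new work concentrated in the implication $(3)\Rightarrow(1)$ (equivalently $(4)\Rightarrow(1)$), where one must upgrade ``$G$ lands in $\clH$'' to ``$\clH$ is Grothendieck''. I would organize the argument as $(1)\Rightarrow(2)\Rightarrow(3)\Rightarrow(4)\Rightarrow(1)$.

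\emph{$(1)\Rightarrow(2)$.} This is essentially Proposition~\ref{P:kerF=E}: assuming $\clH$ Grothendieck, Proposition~\ref{P:GP} gives that $\Ker F$ is a hereditary torsion class, Proposition~\ref{P:kerF=E}~(1) identifies $\Ker F=\E$, so $\E$ is a hereditary torsion class, and Proposition~\ref{P:kerF=E}~(3) gives $\E_{\perp}=\E_{\perp_\infty}$.

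\emph{$(2)\Rightarrow(3)$.} Fix $M\in\Modr S$ with projective resolution $P_M$, so $\mathbb L G(M)\cong P_M\otimes_S T$ is a complex with terms in $\Add T$ concentrated in degrees $\le 0$; by Lemma~\ref{L:U} it lies in $\U$. To show it lies in $\clH$ one must show $\Hom_{\D(R)}(T[i],\mathbb L G(M))=0$ for $i<0$, i.e.\ for all negative $i$. By the adjunction of Fact~\ref{F:good-tilting}~(2)(i), $\Hom_{\D(R)}(T[i],\mathbb L G(M))\cong\Hom_{\D(S)}(\RHom_R(T,T)[i],\RHom_R(T,\mathbb L G(M)))$; more directly, using that $\mathbb R H\circ\mathbb L G$ need not be identity, I would instead argue via $\Ker(\mathbb L G)^{\perp}$: by Fact~\ref{F:good-tilting}~(2)(iii) the essential image of $\RHom_R(T,-)$ is $\Ker(\mathbb L G)^{\perp}$, and Proposition~\ref{P:left-adjoint}~(2) characterizes the essential image of $\Hom_{\clH}(T,-)$ as $\Ker(\mathbb L G)^{\perp}\cap\Modr S$. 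So it suffices to show $\RHom_R(T,\mathbb L G(M))$ has cohomology in degree zero only; but $H^{-i}\RHom_R(T,\mathbb L G(M))\cong\Hom_{\D(R)}(T[i],\mathbb L G(M))$ which, since $\mathbb L G(M)$ has terms in $\Add T$ bounded above, is computed as $\Hom_{\K(R)}(T[i],P_M\otimes_S T)$; this vanishes for $i>0$ trivially and for $i<0$ it is the homology of $\Hom_S(T,P_M)\otimes$-type complexes governed by $\Tor^S_j(-,T)$, and the hereditary torsion condition on $\E$ together with $\E_{\perp}=\E_{\perp_\infty}$ forces the relevant higher $\Ext_S(E,\Hom_{\clH}(T,X))$ to vanish --- this is exactly the content packaged in Lemma~\ref{L:perp-infinity}, whose proof already does the delicate model-category computation for bounded-above complexes. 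Concretely: the truncation $\tau_{<0}(P_M\otimes_S T)$ is quasi-isomorphic to a shift of a complex with terms in $\E\otimes_S T=0$-type pieces, so its image under $\RHom_R(T,-)$ vanishes in negative degrees by Lemma~\ref{L:perp-infinity} applied to $S$.

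\emph{$(3)\Rightarrow(4)$.} If $\mathbb L G(M)\in\clH$ for every $M$, then $G=\mathbb L G|_{\Modr S}$ already has image in $\clH$, so $b\circ G$ (the left adjoint $F$ of Proposition~\ref{P:left-adjoint}~(3)) is naturally isomorphic to $G$ because $b$ restricts to the identity on $\clH$. Exactness of $G$: a short exact sequence $0\to N\to M\to L\to 0$ in $\Modr S$ yields a triangle $\mathbb L G(N)\to\mathbb L G(M)\to\mathbb L G(L)\to\mathbb L G(N)[1]$ in $\D(R)$; since all three vertices lie in $\clH$, this triangle is (by property (2) of the heart recalled at the start of Section~\ref{s:the heart}) a short exact sequence in $\clH$, so $G\cong F$ is exact.

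\emph{$(4)\Rightarrow(1)$.} Exactness of $F$ is condition (2) of Proposition~\ref{P:GP}, which gives $\clH$ Grothendieck. This closes the cycle.

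\emph{Main obstacle.} The crux is $(2)\Rightarrow(3)$: showing that $\RHom_R(T,\mathbb L G(M))$ is cohomologically concentrated in degree zero for \emph{every} $S$-module $M$. The subtlety is precisely the one already confronted in Lemma~\ref{L:perp-infinity}(c): handling unbounded (in particular bounded-above) complexes with terms in $\E$, where the naive homotopy-limit triangle is useless, and one must instead invoke the model structure on $\D(S)$ induced by the pure-injective cogenerator $C=T^d$ and Corollary~\ref{C:hom-in-homotopy-dual}. I expect the bulk of the write-up to consist of reducing the claim about $\mathbb L G(M)$ to an application of (the $S$-side of) Lemma~\ref{L:perp-infinity}, together with the identification $\Tor^S_i(M,T)=0$ for the truncations, so that the remaining piece $\tau_{>0}$-free part of $P_M\otimes_S T$ pairs trivially with $T$ in negative degrees.
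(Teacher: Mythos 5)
Your overall architecture $(1)\Rightarrow(2)\Rightarrow(3)\Rightarrow(4)\Rightarrow(1)$ matches the paper, and three of the four implications are sound: $(1)\Rightarrow(2)$ via Proposition~\ref{P:kerF=E}, $(4)\Rightarrow(1)$ via Proposition~\ref{P:GP}, and $(3)\Rightarrow(4)$, where your argument for exactness (a short exact sequence in $\Modr S$ gives a triangle in $\D(R)$ whose three vertices lie in $\clH$ by (3), hence a short exact sequence in $\clH$) is in fact a cleaner route than the paper's proof that $\Ker G$ is closed under submodules.

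The genuine gap is $(2)\Rightarrow(3)$, which you correctly identify as the crux but do not actually prove. First, the directions are swapped: since $\mathbb L G(M)\in \U$, the vanishing of $\Hom_{\D(R)}(T[i],\mathbb L G(M))$ for $i<0$ is automatic (a chain map from $T[i]$, $i<0$, to a fibrant complex concentrated in degrees $\leq 0$ is zero), and what must be shown is the vanishing for $i>0$, i.e.\ that $\RHom_R(T,M\Lotimes_ST)$ has no cohomology in negative degrees. More seriously, your mechanism for making hypothesis (2) bear on this --- ``the truncation $\tau_{<0}(P_M\otimes_ST)$ is quasi-isomorphic to a shift of a complex with terms in $\E\otimes_ST=0$-type pieces, so its image under $\RHom_R(T,-)$ vanishes \dots by Lemma~\ref{L:perp-infinity} applied to $S$'' --- is not an argument: $E\otimes_ST=0$ for $E\in\E$ by the very definition of $\E$, the truncations of $P_M\otimes_ST$ have no reason to be built from such modules, and Lemma~\ref{L:perp-infinity} identifies which \emph{$S$-modules} lie in the essential image of $\Hom_{\clH}(T,-)$; it is not a vanishing statement about $\RHom_R(T,-)$ of truncations of complexes of $R$-modules. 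In particular the hypothesis $\E_{\perp}=\E_{\perp_\infty}$ never enters your computation in any identifiable way. The missing idea (and the paper's actual proof) is to produce an object of $\clH$ realizing $\mathbb L G(M)$: replace $M$ by $M/M_{\E}$ (this does not change $\mathbb L G$), take the $\E_{\perp}$-reflection $0\to M\to Y\to E\to 0$ of the $\E$-torsion-free module $M$ (Geigle--Lenzing), so that $\mathbb L G(M)\cong\mathbb L G(Y)$; then $\E_{\perp}=\E_{\perp_\infty}$ together with Lemma~\ref{L:perp-infinity} (this is where hereditariness of $\E$ is used) exhibits $Y\cong\Hom_{\clH}(T,X)\cong\RHom_R(T,X)$ for some $X\in\clH$, and full faithfulness of $\RHom_R(T,-)$ gives $\mathbb L G(Y)\cong X\in\clH$. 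Without a reflection step of this kind there is no way to bring $\E_{\perp_\infty}$ into play, and the direct computation of $\RHom_R(T,P_M\otimes_ST)$ cannot be completed as you sketch it.
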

\begin{proof} (1) $\Rightarrow $ (2) Follows by Proposition~\ref{P:kerF=E}.

(2) $\Rightarrow $ (3) Let $M\in \Modr S$ and let $M_{\E}$ be the torsion submodule of $M$ with respect to the torsion pair $(\E, \E^{\perp_0})$. Then, clearly $\mathbb L G(M/M_{\E})\cong \mathbb L G(M)$ and so, w.l.o.g. we may assume that $M$ is $\E$-torsion free.

By \cite[Proposition 2.2]{GL91} $M$ has a $\E_{\perp}$-reflection. That is there is a short exact sequence
$0\to M\to Y\to E\to 0$ with $Y\in \E_{\perp}$ and $E\in \E$. Thus $\mathbb L G(Y)\cong 
\mathbb L G(M)$. Now, by assumption and Lemma~\ref{L:perp-infinity}, there is $X\in \clH$ such that $\Hom_{\clH}(T, X)\cong Y$. By  Lemma~\ref{L:hom-in-degree-0} $\Hom_{\clH}(T, X)\cong \Hom_{\D(R)}(T,X)\cong \RHom _R(T, X)$, hence $\mathbb L G(Y)\cong \mathbb L G( \RHom _R(T, X))\cong X$, since $\Hom_{\clH}(T, -)$ is fully faithful. We conclude that $\mathbb L G(M)\in \clH$.

(3) $\Rightarrow $ (4) Let $F$ be the left adjoint of the functor $\Hom_{\clH}(T, -)\colon \clH\to \Modr S$ given by Proposition~\ref{P:left-adjoint}~(3). Condition (3) implies that for every $M\in \Modr S$, $F(M)\cong G(M).$  To show that $G$ is an exact functor we prove that $\Ker G=\Ker F$ is a hereditary torsion class in $\Modr S$.
 Since $G$ is right exact it is enough to show that $\Ker G$ is closed under submodules.
Let $0\to M\to L\to N\to 0$ be an exact sequence in $\Modr S$ with $L\in \Ker G$. From the triangle $N[-1]\to M\to L\to N$ we have the triangle \[N\Lotimes_ST[-1]\to M\Lotimes_ST\to L\Lotimes_ST\to N\Lotimes_ST\] in $\D(R)$. By assumption $L\Lotimes_ST=G(L)=0$, and $N\Lotimes_ST=G(N)=0$. Then, also $N\Lotimes_ST[-1]$ is zero, and by the above triangle we conclude that $M\Lotimes_ST$ is zero. Hence $G(M)=0$.

(4) $\Rightarrow $ (1) Follows by  Proposition~\ref{P:GP}.
\end{proof}
 We can now interpret the previous characterization in terms of homological epimorphisms and ``generalized universal localization" 
 which is a generalization of the well know concept of universal localization in Schofield's sense (\cite{Scho}).
\begin{defn}
\begin{enumerate}
\item[(1)] (\cite {GL91} A ring homomorphism $f\colon S\to U$ is a homological ring epimorphism if the associated restriction functor $f_{\ast}\colon \D(U)\to \D(S)$ is fully faithful. Equivalently, $f\colon S\to U$ is a ring epimorphism and $\Tor^S_i(U,U)=0$ for every $i\geq 1$.
\item[(2)] (\cite[Section 15]{Kr05}) Let $S$ be a ring and $\Sigma$ a set of
perfect complexes $P\in \K(S^{\mathrm{op}})$. 
A ring $U$ is a \emph{generalized universal localization} of $S$ at the set $%
\Sigma$ if there is a ring homomorphism $\lambda\colon S\rightarrow U$ such that $%
U\underset{S}\otimes P$ is acyclic and $\lambda$ satisfies the universal property with respect to this property. That is, 
 for every ring homomorphism $\mu\colon S\rightarrow R$ such that $%
R\underset{S}\otimes P$ is acyclic, there exists a unique ring homomorphism $%
\nu\colon U\rightarrow R$ such that $\nu\circ \lambda=\mu$.
\end{enumerate}
\end{defn}

\begin{thm}\label{T:S/A} Let $\clH$ be the heart of the $t$-structure induced by a good $n$-tilting module $T_R$ with endomorphism ring $S$ and let $\E$ be  as in Definition~\ref{D:E}. The following are equivalent:
\begin{enumerate}
\item $\clH$ is a Grothendieck category.
\item There is an idempotent two sided-ideal $A$ of $S$ projective as a right $S$-module such that $\E=\Modr S/A$.
\end{enumerate}
If the above conditions are satisfied, then the canonical morphism $S\to S/A$ is a homological epimorphism and $S/A$ is a generalized universal localization at  a projective resolution of $_ST$.

In particular, there is recollement:

\begin{equation*}
\xymatrix{\D(S/A) \ar[rrr]^{\lambda_{\ast}} &&&\D(S) \ar@/^2pc/
[lll]^{i^{!}=\RHom_S(S/A, -)} \ar@/_2pc/ [lll]_{i^{\ast}=
-\overset{\mathbb L} {\underset{S}{\otimes}}(S/A)} \ar[rrr]^{j^{\ast}=
-\Lotimes_ST} &&& \D(R) \ar@/^2pc/ [lll]^{j_{\ast}=\RHom_R(T, -)} \ar@/_2pc/ [lll]_{j_{!}} }
\end{equation*}

\end{thm}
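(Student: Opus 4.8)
The plan is to prove the equivalence $(1)\Leftrightarrow(2)$ by exploiting the description of $\clH$ as a Gabriel quotient obtained in Proposition~\ref{P:kerF=E} together with the characterization of hereditary torsion classes of the form $\Modr{S/A}$ in terms of idempotent ideals. First I would assume $(1)$. By Proposition~\ref{P:kerF=E}~(1), $\E=\Ker F$ is a hereditary torsion class in $\Modr S$, hence it is a localizing subcategory and the quotient $\Modr S\to\Modr S/\E$ exists with fully faithful section. The key structural input is that a hereditary torsion class $\E$ in a module category $\Modr S$ has the form $\Modr{S/A}$ for a two-sided idempotent ideal $A$ \emph{projective as a right $S$-module} \iff the torsion radical is generated by such an ideal; concretely, set $A=\{s\in S\mid Ss\subseteq \tr_\E(S)\}$ or rather take $A$ to be the trace ideal associated to the torsion-theoretic reflection of $S$. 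I would then argue that, because $T^\perp$ has finite resolution dimension in $\clH$ (Proposition~\ref{P:resol-dimension}) and $_ST$ has a finite projective resolution by finitely generated projectives (Fact~\ref{F:good-tilting}~(1)(a)), the torsion class $\E$ is generated by a \emph{finitely presented} object, which forces the associated radical to be given by an idempotent ideal $A$, and the projectivity of $_ST$ together with exactness of $F=G=-\Lotimes_S T$ on the projective resolution $P_\bullet$ of $_ST$ (used already in the proof of Proposition~\ref{P:kerF=E}~(1)) forces $A$ to be projective as a right $S$-module. This gives $(2)$.

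For $(2)\Rightarrow(1)$: given such an $A$, the category $\Modr{S/A}$ is a Serre subcategory closed under coproducts and (by projectivity of $A$ and the standard fact that $A^2=A$ projective makes $\Modr{S/A}$ closed under submodules, equivalently $\Ext^1_S(S/A,-)$ vanishes on $S/A$-modules) it is a \emph{hereditary} torsion class; hence $\E=\Modr{S/A}$ is a hereditary torsion class. Moreover $\E_\perp=\E_{\perp_\infty}$: using $\Ext^i_S(S/A,M)$ for $M\in\E_\perp$, the projective resolution of $_ST$ sitting inside $\Modr{S/A}$ shows higher Ext vanishing propagates, or more directly one checks $S/A$ is a homological epimorphism (see below) and invokes the corresponding derived-orthogonality. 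Then condition~(2) of Theorem~\ref{T:E-hereditary} holds, so $\clH$ is a Grothendieck category.

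It remains to prove, under $(1)$--$(2)$, the three supplementary assertions. That $S\to S/A$ is a homological epimorphism: it is a ring epimorphism because $A$ is idempotent (so $S/A\otimes_S S/A\cong S/A$), and one needs $\Tor^S_i(S/A,S/A)=0$ for $i\geq 1$; this follows from the exact sequence $0\to A\to S\to S/A\to 0$ and the projectivity of $A$ as a right $S$-module, giving $\Tor^S_i(S/A,-)\cong \Tor^{S}_{i-1}(A,-)$ which vanishes on $S/A$. That $S/A$ is a generalized universal localization at a projective resolution $P_\bullet$ of $_ST$: one shows $S/A\otimes_S P_\bullet$ is acyclic — equivalently $\Tor^S_i(S/A,T)=0$ for all $i$, i.e. $S/A\in\E$ regarded via $_ST$, which holds since by $(2)$ every $S/A$-module lies in $\E$ — and then verifies the universal property: for $\mu\colon S\to R'$ with $R'\otimes_S P_\bullet$ acyclic, one has $R'\in\E$ on the right, hence $R'A=0$, so $\mu$ factors uniquely through $S/A$; uniqueness is automatic since $S\to S/A$ is an epimorphism of rings. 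Finally the recollement: the homological epimorphism $S\to S/A$ yields the standard recollement of $\D(S/A)$, $\D(S)$ and the Verdier quotient $\D(S)/\D(S/A)$, and by Fact~\ref{F:good-tilting}~(2) together with $\Ker(\mathbb L G)$ being generated by $\E=\Modr{S/A}$ (Proposition~\ref{P:kerF=E} and the description of $\Ker(\mathbb L G)$ via complexes with terms in $\E$), this Verdier quotient is identified with $\D(R)$ via $j^*=-\Lotimes_S T$ and $j_*=\RHom_R(T,-)$, producing the displayed diagram.

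I expect the main obstacle to be the passage, in $(1)\Rightarrow(2)$, from ``$\E$ is a hereditary torsion class'' to ``$\E=\Modr{S/A}$ with $A$ idempotent and \emph{right} projective'': one must extract finiteness (so that the Gabriel filter has a least element, forcing an idempotent ideal) from the tilting data, and then separately extract right-projectivity of $A$ from the fact that $-\Lotimes_S T$ kills $\E$ and is exact there. Verifying $\E_\perp=\E_{\perp_\infty}$ in the $(2)\Rightarrow(1)$ direction, i.e. that no higher obstructions appear, is the other delicate point and is where the homological-epimorphism property of $S\to S/A$ does the real work.
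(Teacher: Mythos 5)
Your proposal gets the overall skeleton right (reduce (1)$\Leftrightarrow$(2) to Theorem~\ref{T:E-hereditary}/Proposition~\ref{P:kerF=E}, then translate ``hereditary torsion class with $\E_\perp=\E_{\perp_\infty}$'' into an idempotent right-projective ideal), and your direct verifications of the supplementary claims are sound: the ring-epimorphism and $\Tor$-vanishing computation for $S\to S/A$ via $0\to A\to S\to S/A\to 0$ and $A\otimes_S S/A\cong A/A^2=0$, and the universal-localization check ($S/A\in\E$ gives acyclicity of $S/A\otimes_S P_\bullet$, and any $\mu\colon S\to R'$ killing the resolution has $R'\in\E$, hence $R'A=0$) are correct and in fact more self-contained than the paper, which cites \cite{BP} and \cite{GL91} for these points. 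But the two steps you yourself flag as ``main obstacles'' in (1)$\Rightarrow$(2) are genuine gaps, and the mechanisms you sketch for them are not the right ones. First, to get $\E=\Modr{S/A}$ with $A$ idempotent, the paper does not argue that $\E$ is ``generated by a finitely presented object''; the correct use of the finitely generated projective resolution of ${}_ST$ is that $\Tor^S_i(-,T)$ then commutes with direct products, so $\E$ is closed under products, hence is a torsion-torsionfree class, and Stenstr\"om's correspondence (\cite[Proposition 6.11]{Ste75}) between TTF classes and idempotent two-sided ideals produces $A$. Your finiteness claim is unproved and is not what forces the Gabriel filter to have a least element; closure under products is.

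Second, the right-projectivity of $A$ does not follow from ``exactness of $F=-\Lotimes_S T$ on a projective resolution of ${}_ST$''; no argument is given and I do not see one along those lines. The paper's proof is genuinely different: first $S\to S/A$ is a homological epimorphism (\cite[Theorem 6.1]{BP}), so $\Ext^i_S(S/A,E)=0$ for all $E\in\E$, $i\geq 1$ (\cite[Theorem 4.4]{GL91}); then $\E_\perp=\E_{\perp_\infty}$ (from Theorem~\ref{T:E-hereditary}) gives $\Ext^i_S(S/A,Y)=0$ for $Y\in\E_\perp$, $i\geq 1$; finally, for arbitrary $M$, splitting off the torsion part $M_\E$ and embedding $M/M_\E$ into its $\E_\perp$-reflection with cokernel in $\E$ yields $\Ext^i_S(S/A,M)=0$ for all $i\geq 2$, i.e.\ $\mathrm{p.dim}\,(S/A)_S\leq 1$, whence $A_S$ is projective. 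You would need to supply this (or an equivalent) argument. A smaller point: in (2)$\Rightarrow$(1) your justification of $\E_\perp=\E_{\perp_\infty}$ is muddled --- the projective resolution of ${}_ST$ does not ``sit inside $\Modr{S/A}$'', and the homological-epimorphism derived orthogonality does not apply directly because a module in $\E_\perp$ need not be an $S/A$-module; the clean argument is dimension shifting along free $S/A$-resolutions of objects of $\E$ (whose syzygies stay in $\E$) together with $\mathrm{p.dim}\,(S/A)_S\leq 1$.
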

\begin{proof}
(1) $\Rightarrow$ (2) By Theorem~\ref{T:E-hereditary}  $\E$ is a hereditary torsion classe. Since $_ST$ has a projective resolution consisting of finitely generated projective $S$-modules, $\E$ is closed under direct products. Thus $\E$ is a torsion-torsion free class. By \cite[Proposition 6.11]{Ste75} there is an idempotent two sided ideal $A$ of $S$ such that $\E=\Modr S/A$. By \cite[Theorem 6.1]{BP} the canonical morphism $S\to S/A$ is a homological epimorphism, hence by \cite[Theorem 4.4]{GL91}, $\Ext^i_S(S/A, E)=0$ for every $E\in \E$ and every $i\geq 1$. We show now that $A_S$ is moreover a projective module.

By Theorem~\ref{T:E-hereditary}, we know that  $\E_{\perp}=\E_{\perp_\infty}$. Hence for every module $Y\in \E_{\perp}$, $\Ext^i_S(S/A, Y)=0$ for every $i\geq 1$. Let $M\in \Modr S$ and let $M_{\E}$ be the $\E$-torsion submodule of $M$. As in the proof of Theorem~\ref{T:E-hereditary} (3) $\Rightarrow $ (4), consider an $\E$-reflection of $M/M_{\E}$ that is a module $Y\in  \E_{\perp}$ such that there is a short exact sequence
$0\to M/M_{\E}\to Y\to E\to 0$ with $E\in \E$. By the above remarks we have that $\Ext^i_S(S/A, M/M_{\E})=0$ for every $i\geq 2$ and from the exact sequence $0\to M_{\E}\to M\to M/M_{\E}\to 0$ we conclude that $\Ext^i_S(S/A, M)=0$, for every $i\geq 2$, hence p.dim $S/A\leq 1$ and $A$ is projective as a right $S$-module.

(2) $\Rightarrow$ (1) $\E=\Modr S/A$ implies that $\E$ is a hereditary torsion class and the projectivity of $A_S$ implies tat $\E_{\perp}=\E_{\perp_\infty}$. Hence the conclusion follows by Theorem~\ref{T:E-hereditary}.

The last statement follows by \cite[Theorem 6.1, Proposition 7.3 ]{BP}.
\end{proof}

For the case of a good $1$-tilting module, condition (2) in Theorem~\ref{T:E-hereditary} can be weakened, since the assumption on $\E$ to be hereditary is enough. 
To see this we use the following result  \cite[Section 4]{B10}.

\begin{rem}\label{R:C-torsion} (\cite[Section 4]{B10}.) If  $_ST_R$ is a good $1$-tilting module and $W$ is an injective cogenerator of $\Modr R$, we let  $T^d=\Hom_R(T, W)$ be the dual of $T$. Then 
$\T_C=\{M_S\mid \Hom_S(M, C)=0\}=\{M_S\mid M\otimes_ST\}$ is a torsion class in $\Modr S$ with corresponding torsion free class $\Cogen C\subseteq ^\perp C$. Let $(\T, \F)$ be the torsion pair in $\Modr R$ induced by $T_R$. There are equivalences
\[\xymatrix{\T \ar@/_1pc/
[rrr]_{\Hom_R(T, -)} &&&\Cogen C\cap \E_{\perp}  \ar@/_1pc/[lll]_{-\otimes_S T}} 
\]

 \[\xymatrix{\F \ar@/_1pc/
[rrr]_{\Ext^1_R(T, -)} &&&\T_C\cap \E_{\perp}  \ar@/_1pc/[lll]_{\Tor_1^S(- T}} 
\]

\end{rem}
  \begin{prop}\label {P:case-n=1} Let $\clH$ be the heart of the $t$-structure induced by a good $1$-tilting module $T_R$ with endomorphism ring $S$ and let $\E$ be  as in Definition~\ref{D:E}. The following are equivalent:
\begin{enumerate}
\item $\clH$ is a Grothendieck category.
\item $\E$ is a hereditary torsion class in $\Modr S$.
\item For every $M\in \Modr S$, $\mathbb L G(M)\in \clH$.
\end{enumerate}
\end{prop}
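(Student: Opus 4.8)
The plan is to reduce everything to the general characterization already established in Theorem~\ref{T:E-hereditary}, specialized to $n=1$, and then to remove the extra hypothesis $\E_{\perp}=\E_{\perp_\infty}$ by showing that for a good $1$-tilting module it is automatic once $\E$ is a hereditary torsion class. The implications $(1)\Leftrightarrow(3)$ and $(1)\Rightarrow(2)$ are immediate consequences of Theorem~\ref{T:E-hereditary} (and Proposition~\ref{P:kerF=E}), so the only work is $(2)\Rightarrow(1)$, or equivalently $(2)\Rightarrow(3)$.

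So assume $\E$ is a hereditary torsion class in $\Modr S$; I want to deduce that $\E_{\perp}=\E_{\perp_\infty}$, after which Theorem~\ref{T:E-hereditary} closes the argument. First I would recall that $p.d.\,{}_ST\leq 1$ by Fact~\ref{F:good-tilting}~(1)(a) applied with $n=1$, so that $\Tor^S_i(M,T)=0$ for all $i\geq 2$ and every $M_S$; consequently $M\in\E$ if and only if $M\otimes_ST=\Tor^S_1(M,T)=0$, and dually, using $T^d=C$ and the formula $\Ext^i_S(N,C)\cong[\Tor^S_i(N,T)]^d$ from the proof of Lemma~\ref{L:perp-infinity}, one has $\E={}^{\perp_1}C\cap{}^{\perp_0}C$, i.e. $\E=\T_C$ in the notation of Remark~\ref{R:C-torsion}. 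Now take $Y\in\E_{\perp}$; I must show $\Ext^i_S(E,Y)=0$ for all $E\in\E$ and all $i\geq 2$. Since $p.d.\,{}_ST\leq 1$, every $E\in\E$ has a presentation with kernel syzygy again controlled by the hereditary torsion class: because $\E$ is hereditary, for a projective presentation $0\to K\to P\to E\to 0$ with $P$ projective we have $K\in\E$ as well (submodule of... well, $K$ need not be a submodule of an $\E$-module; rather, from $P\otimes_ST$ being computed by the resolution and $p.d.\,{}_ST\leq 1$ one sees $\Tor^S_1(E,T)$ injects into $K\otimes_ST$, so $K\otimes_ST=0$, i.e. $K\in\E$). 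Then dimension shifting gives $\Ext^{i}_S(E,Y)\cong\Ext^{i-1}_S(K,Y)$ for $i\geq 2$, and iterating reduces everything to $\Ext^1_S(E',Y)=0$ for $E'\in\E$, which holds since $Y\in\E_{\perp}$. Hence $\E_{\perp}=\E_{\perp_\infty}$.

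An alternative and perhaps cleaner route for $(2)\Rightarrow(3)$, avoiding the syzygy bookkeeping, is to imitate the proof of $(2)\Rightarrow(3)$ in Theorem~\ref{T:E-hereditary} directly: given $M\in\Modr S$, replace $M$ by its $\E$-torsion-free quotient $M/M_{\E}$ without changing $\mathbb L G(M)$, take the $\E_{\perp}$-reflection $0\to M/M_{\E}\to Y\to E\to 0$ furnished by \cite[Proposition 2.2]{GL91} (available since $\E$ is hereditary), so that $\mathbb L G(Y)\cong\mathbb L G(M)$; then observe that $Y\in\Cogen C\cap\E_{\perp}$ or fits into Remark~\ref{R:C-torsion}'s equivalences, which identify $-\otimes_ST$ restricted to $\Cogen C\cap\E_{\perp}$ with an honest functor into the torsion class $\T\subseteq T^\perp[0]\subseteq\clH$, giving $\mathbb L G(Y)\in\clH$ concentrated in degree $0$, and more generally handling the $\Tor_1$-part via the second equivalence so that $\mathbb L G(M)$ lands in $\clH$ (a two-term complex as in Remark~\ref{R:n=1}). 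Either way, once $\mathbb L G(M)\in\clH$ for all $M$, condition~(3) of Theorem~\ref{T:E-hereditary} is met and $\clH$ is Grothendieck.

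The main obstacle is the step showing that $\E$ hereditary forces $\E_{\perp}=\E_{\perp_\infty}$: one has to use in an essential way that $p.d.\,{}_ST\leq 1$ (so that $\E$ is determined by a single $\Tor$) to get that the syzygy of an $\E$-module is again in $\E$, which is what allows dimension shifting to collapse higher Ext groups; this is exactly the point where the argument genuinely uses $n=1$ and does not generalize, matching Remark~\ref{R:coproducts}. I would present the proof in the order: first dispatch $(1)\Leftrightarrow(3)$ and $(1)\Rightarrow(2)$ by citing Theorem~\ref{T:E-hereditary}, then carry out $(2)\Rightarrow(1)$ via the syzygy/dimension-shifting lemma to recover $\E_{\perp}=\E_{\perp_\infty}$ and invoke Theorem~\ref{T:E-hereditary} once more.
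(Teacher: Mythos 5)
Your reduction of everything to Theorem~\ref{T:E-hereditary}, and the identification of $(2)\Rightarrow(3)$ as the only real work, are correct; but your primary argument (the syzygy/dimension-shifting route) contains a step that fails. From a projective presentation $0\to K\to P\to E\to 0$ of $E\in\E$, the long exact sequence of $\Tor^S_{\bullet}(-,T)$ reads
\[0=\Tor^S_1(P,T)\to\Tor^S_1(E,T)\to K\otimes_ST\to P\otimes_ST\to E\otimes_ST\to 0,\]
and since $E\in\E$ the two outer terms vanish, giving $K\otimes_ST\cong P\otimes_ST$. This is \emph{not} zero in general (for $P=S^{(\kappa)}$ it is $T^{(\kappa)}$), so the syzygy $K$ does not lie in $\E$, and the claimed collapse of $\Ext^i_S(E,Y)$ for $i\geq 2$ by dimension shifting does not go through. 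The observation that $\Tor^S_1(E,T)$ injects into $K\otimes_ST$ is vacuous, since that group is already zero. Hence your direct derivation of $\E_{\perp}=\E_{\perp_\infty}$ from the hereditariness of $\E$ is unsubstantiated; in the paper this equality is obtained only \emph{a posteriori}, after $\clH$ is known to be Grothendieck, via the projectivity of the ideal $A$ in Theorem~\ref{T:S/A}.

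Your alternative route is the one the paper actually follows, but as sketched it omits the two points that make it work. First, before taking the $\E_{\perp}$-reflection one must also pass to the quotient $M/M_C$ by the $\T_C$-torsion submodule; this is harmless because $M/M_C\in\Cogen C\subseteq{}^{\perp}C$ forces $\Tor^S_1(M/M_C,T)=0$, and it allows one to assume $M\in\T_C\cap\E^{\perp_0}$. Second, the reflection $0\to M\to Y\to Y/M\to 0$ then has $Y\in\T_C\cap\E_{\perp}$ (not $\Cogen C\cap\E_{\perp}$): indeed $\E\subseteq\T_C$ gives $Y/M\in\T_C$, and $\T_C$ is closed under extensions. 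Now the second equivalence of Remark~\ref{R:C-torsion} yields $\Tor^S_1(Y,T)\cong\Tor^S_1(M,T)\in\F=T^{\perp_0}$, which by Remark~\ref{R:n=1} is exactly the condition for $\mathbb L G(M)$, a two-term complex with $H^{-1}=\Tor^S_1(M,T)$ and $H^0=M\otimes_ST\in\Gen T$, to lie in $\clH$. With these two points supplied the second route is complete; the first route should be discarded.
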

\begin{proof} In view of Theorem~\ref{T:E-hereditary} it is enough to prove the implication (2) $\Rightarrow$ (3).

Let $M$ be a right $S$-module. Since $\Tor^S_i(-, T)=0$ for every $i\geq 2$, $\mathbb L G(M)\in \clH$ if and only if $\Tor^S_1(M, T)\in T^{\perp_0}$. Let $M_{\E}$ be the torsion submodule of $M$ with respect to the torsion pair $(\E, \E^{\perp_0})$. Then, $\Tor_1^S(M, T)\cong \Tor_1^S(M/M_{\E}, T)$, hence, w.l.o.g. we may assume that $M$ is $\E$-torsion free. 
Let now $M_C$ be the torsion submodule of $M$ with respect to the torsion class $\T_C$ (see Remark~\ref{R:C-torsion}). Then, $M/M_C\in \Cogen C\subseteq ^\perp C$, hence $\Tor^S_1(M/M_C, T)=0$, so that we can even assume that $M\in \T_C\cap \E^{\perp_0}$. As in the proof of (2) $\Rightarrow $ (3) in Theorem~\ref{T:E-hereditary}, there is an exact sequence $0\to M\to Y\to Y/M\to 0$ with $Y\in \E_{\perp}$ and $  Y/M\in \E$, then $\Tor_1^S(M, T)\cong \Tor_1^S(Y, T)$. Since $E\otimes_ST=0$ for every $E\in \E$ we have that $\E\subseteq \T_C$, hence $Y/M\in \T_C$. Thus $Y\in \E_{\perp}\cap \T_C$ and by Remark~\ref{R:C-torsion}  $ \Tor^S_1(Y, T)\in \F$ and thus, also $\Tor_1^S(M, T)\in \F$.
  \end{proof}

%

\section{Computing direct limits in the heart}~\label{s:dir-lim-heart}
In this section $\clH$ will always denote the heart of the $t$-structure induced by an $n$-tilting module $T_R$.

We apply the characterization proved by Theorem~\ref{T:E-hereditary} to show some properties of the category $\clH$.

  \begin{prop}\label{P:coproduct-closure} Assume that  $\clH$ is a Grothendieck category. Then the following hold true:
\begin{enumerate}
\item $\clH$ is closed under coproducts in $\D(R)$.
\item The classes $T^{\perp_i}$ are closed under direct sums in $\Modr R$, for every $i\geq 0$.
\end{enumerate}
\end{prop}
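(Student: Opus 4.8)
The plan is to establish both statements together, deriving (2) from (1) with the help of the characterizations of objects of $\clH$ already proved. First I would prove (1). Let $(X_\alpha)_{\alpha\in\Lambda}$ be a family of objects of $\clH$ and let $X=\coprod_\alpha X_\alpha$ be their coproduct in $\D(R)$; I want to show $X\in\clH$, i.e.\ $\Hom_{\D(R)}(T[i],X)=0$ for all $i\neq 0$. The naive obstacle is that $T[i]$ need not be compact in $\D(R)$ (it is not, since $T$ is not finitely generated in general), so $\Hom_{\D(R)}(T[i],-)$ does not commute with coproducts a priori. The way around this is to use the model-theoretic description: by Lemma~\ref{L:hom-in-degree-0} each $X_\alpha$ may be represented by a complex with terms in $T^\perp$, and $\RHom_R(T,X_\alpha)\cong\Hom_R(T,X_\alpha)$ is concentrated in degree zero. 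I would then invoke the Grothendieck hypothesis via Theorem~\ref{T:E-hereditary}: setting $M=\bigoplus_\alpha\Hom_{\clH}(T,X_\alpha)\in\Modr S$, we have $X_\alpha\cong \mathbb L G(\Hom_{\clH}(T,X_\alpha))$ (using that $\Hom_{\clH}(T,-)$ is fully faithful with $\mathbb L G$ as quasi-inverse on its image, by Proposition~\ref{P:left-adjoint} and Lemma~\ref{L:hom-in-degree-0}), and since $\mathbb L G=-\Lotimes_S T$ commutes with coproducts we get $X\cong\mathbb L G(M)$. But Theorem~\ref{T:E-hereditary}(1)$\Rightarrow$(3) says $\mathbb L G(M)\in\clH$ for \emph{every} $M\in\Modr S$. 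Hence $X\in\clH$, proving (1).

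Next, for (2), I would use Proposition~\ref{P:coproducts}, which already gives that (1) is equivalent to $T^{\perp_1}$ being closed under direct sums in $\Modr R$; so the case $i=1$ is immediate, and $i=0$ is classical (any $T^{\perp_0}=\Gen$-type class, or more simply an orthogonal class to a single module under $\Hom$, is closed under arbitrary direct sums, as is clear). The remaining work is the higher $\Ext$ groups $i\geq 2$. Here the idea is to run an inductive/dimension-shifting argument using the objects $X_N$ built in Proposition~\ref{P:N-in-orthogonal}. Given a family $(N_\alpha)$ of modules with $N_\alpha\in T^{\perp_i}$ for the relevant range, I would attach to each $N_\alpha$ one of the complexes $X_{N_\alpha}\in\clH$ from Proposition~\ref{P:N-in-orthogonal}, chosen so that one of its cohomologies or kernels-of-differentials recovers $\Ext^i_R(T,N_\alpha)$ up to the assumed vanishing. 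By part (1) the coproduct of the $X_{N_\alpha}$ lies in $\clH$, and then Lemma~\ref{L:description}, whose conditions (1) and (2) are phrased precisely in terms of $\Ext^1_R(T,-)$ of kernels of differentials together with trace conditions, forces $\Ext^1_R(T,\bigoplus_\alpha K_\alpha)=0$ for the appropriate syzygy-type modules $K_\alpha$; dimension shifting then converts this into $\Ext^i_R(T,\bigoplus_\alpha N_\alpha)=0$. I would organize this as a downward induction on $i$ from $n$ to $1$, at each stage peeling off one special $T^\perp$-preenvelope so that the module in question becomes a member of the class $T^{\perp_1}\cap\cdots\cap T^{\perp_{i-1}}$ to which the previously handled case applies, much as in Corollary~\ref{C:torsion}.

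The step I expect to be the main obstacle is the bookkeeping in (2) for general $i$: one must choose, for each $i$, the right variant of $X_N$ from Proposition~\ref{P:N-in-orthogonal} (the plain one, or (1), (2), (3)) so that the relevant cohomology of the complex is exactly $\Ext^i_R(T,N)$ and so that the intermediate modules $K_\alpha$ appearing in the glued short exact sequences are genuinely the syzygies whose $\Ext^1$ I can control via Lemma~\ref{L:description}. Getting the degree indices and the trace-condition part of Lemma~\ref{L:description} to line up with the dimension-shifting isomorphisms $\Ext^1_R(T,\Omega^{i-1}N)\cong\Ext^i_R(T,N)$ is where care is needed; once the $i=1$ case is in hand from Proposition~\ref{P:coproducts}, the inductive engine should turn smoothly, since at each step the hypothesis $N_\alpha\in T^{\perp_1}\cap\cdots\cap T^{\perp_{i-1}}$ is exactly what lets Proposition~\ref{P:N-in-orthogonal} produce a complex in $\clH$ with the needed shape. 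Everything else is a direct appeal to already-established results, so no further calculation beyond this indexing is required.
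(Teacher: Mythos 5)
Your argument for (1) is exactly the paper's: set $M_\alpha=\Hom_{\clH}(T,X_\alpha)$, use Theorem~\ref{T:E-hereditary} to identify $X_\alpha\cong\mathbb L G(M_\alpha)$, use that $\mathbb L G$ preserves coproducts, and conclude from the implication (1)$\Rightarrow$(3) of that theorem that $\coprod_\alpha X_\alpha\cong\mathbb L G\bigl(\bigoplus_\alpha M_\alpha\bigr)\in\clH$. The cases $i=0$ (a map into a direct sum is determined by its components) and $i=1$ (Proposition~\ref{P:coproducts} applied to (1)) of part (2) are also fine and coincide with the paper; your parenthetical identification of $T^{\perp_0}$ with a ``$\Gen$-type'' class is off the mark, but the alternative justification you give is the right one.

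For $i\geq 2$, however, your proposed route has a genuine gap. The variants (2) and (3) of Proposition~\ref{P:N-in-orthogonal} require the module to lie in the intersection classes $T^{\perp_1}$, respectively $T^{\perp_1}\cap\dots\cap T^{\perp_{n-1}}$, whereas statement (2) concerns an arbitrary family in the single class $T^{\perp_i}$. Your claim that ``peeling off one special $T^\perp$-preenvelope'' lands the module in $T^{\perp_1}\cap\cdots\cap T^{\perp_{i-1}}$ is false in general: if $0\to N\to B\to A\to 0$ is a special $T^\perp$-preenvelope of $N\in T^{\perp_i}$, dimension shifting gives only $\Ext^{j}_R(T,A)\cong\Ext^{j+1}_R(T,N)$ for $j\geq 1$, hence $A\in T^{\perp_{i-1}}$ and nothing about $\Ext^j_R(T,A)$ for $j<i-1$. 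Moreover the plain complex $X_N$ of Proposition~\ref{P:N-in-orthogonal}, the only variant available for a general $N$, encodes via Lemma~\ref{L:description} only the trace condition and the vanishing of $\Ext^1_R(T,-)$ on the kernels $K_j$, which sit on the $\Add T$-presentation side and shift in the wrong direction; they do not recover $\Ext^i_R(T,N)$. The repair is much simpler and is what the paper does: given $(N_\alpha)$ in $T^{\perp_i}$ with $i\geq 2$, choose special $T^\perp$-preenvelopes $0\to N_\alpha\to B_\alpha\to A_\alpha\to 0$; since $\bigoplus_\alpha B_\alpha$ lies in the tilting class (which is closed under direct sums), one gets $\Ext^i_R\bigl(T,\bigoplus_\alpha N_\alpha\bigr)\cong\Ext^{i-1}_R\bigl(T,\bigoplus_\alpha A_\alpha\bigr)$ with $A_\alpha\in T^{\perp_{i-1}}$, and one concludes by induction on $i$ (or by iterating the shift down to $\Ext^1$ and quoting the $i=1$ case). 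No appeal to Proposition~\ref{P:N-in-orthogonal} or Lemma~\ref{L:description} is needed for this part.
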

\begin{proof} (1) Let $(X_{\alpha}, \alpha\in \Lambda)$ be a family of objects in  $\clH$ and let $\Hom_{\clH}(T, X_{\alpha})=M_{\alpha}\in \Modr S$. By Theorem~\ref{T:E-hereditary}   we have $\mathbb L G(M_{\alpha})\cong X_{\alpha}$ and  $\mathbb L G(\bigoplus\limits_{\alpha\in \Lambda} M_{\alpha})\cong \coprod\limits_{\alpha\in \Lambda}\mathbb L G(M_{\alpha})$ belongs to $\clH$, hence the coproduct $\coprod\limits_{\alpha\in \Lambda}X_{\alpha}$ in $\D(R)$ belongs to $\clH$.

(2) It is clear that $T^{\perp_0}$ is closed under direct sums. By Proposition~\ref{P:coproducts} condition (1) implies that  $T^{\perp_1}$ is closed under direct sums.
We prove the statement by induction. Let $(N_{\alpha}, {\alpha\in \Lambda})$ be a family of $R$-modules in $T^{\perp_i}$ with $i>1$ and for every $\alpha$ consider a special $T^\perp$-preenvelope $0\to N_{\alpha}\to B_{\alpha}\to A_{\alpha}\to 0$ of $N_{\alpha}$; then $A_{\alpha}\in T^{\perp_{i-1}}$.  Consider the exact sequence $0\to \oplus N_{\alpha}\to \oplus B_{\alpha}\to \oplus A_{\alpha}\to 0$. $\oplus B_{\alpha}$ belongs to the tilting class, hence $\Ext^i_R(T, \oplus N_{\alpha})\cong \Ext^{i-1}_R(T, \oplus A_{\alpha})$ and the latter is zero by induction.
\end{proof}
\begin{prop}\label{P:direct-limits} Assume that $\clH$ is a Grothendieck category.
Consider a direct system $(X_{\alpha}; f_{\beta\alpha})_{\alpha\in \Lambda}$ of objects of $\clH$ and let  $(M_{\alpha}; g_{\beta\alpha})_{\alpha\in \Lambda}$ be the corresponding direct system of right $S$-modules obtained by applying the functor $\Hom_{\clH}(T,-)$.
Let $M=\varinjlim\limits_{\Modr S}M_{\alpha}$, then $\varinjlim\limits_{\clH} X_{\alpha}\cong \mathbb L G(M)$.

 In particular, for every $i\in \bbZ$, there are canonical isomorphisms:
 \[H^{-i}(\varinjlim\limits_{\clH} X_{\alpha})\cong\varinjlim\limits_{\Modr R}H^{-i}(X_{\alpha}).\]
 \end{prop}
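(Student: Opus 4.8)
The plan is to exploit the equivalence from Theorem~\ref{T:E-hereditary}: when $\clH$ is Grothendieck, $G = \mathbb{L}G|_{\Modr S}$ is exact and naturally isomorphic to the left adjoint $F$ of $H_T = \Hom_{\clH}(T,-)$, and $H_T$ is fully faithful. First I would observe that a left adjoint preserves colimits, so $F(\varinjlim M_\alpha) \cong \varinjlim_{\clH} F(M_\alpha)$; since $F \cong G$, this gives $G(M) \cong \varinjlim_{\clH} G(M_\alpha)$. It remains to identify $G(M_\alpha)$ with $X_\alpha$ and the transition maps $G(g_{\beta\alpha})$ with $f_{\beta\alpha}$. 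For the objects, $G(M_\alpha) = \mathbb{L}G(\Hom_{\clH}(T,X_\alpha)) \cong X_\alpha$ by the adjunction counit, which is an isomorphism precisely because $H_T$ is fully faithful (this is exactly the computation used in the proof of (2)$\Rightarrow$(3) of Theorem~\ref{T:E-hereditary}). For the morphisms one must check that under this identification the counit is natural, i.e. that the square relating $G(g_{\beta\alpha})$ to $f_{\beta\alpha}$ commutes — this is naturality of the adjunction counit $\mathbb{L}G \circ \mathbb{R}H \to \mathrm{id}$ together with the fact that $H_T$ is the restriction of $\mathbb{R}H = \RHom_R(T,-)$ to $\clH$ (Lemma~\ref{L:hom-in-degree-0} and Proposition~\ref{P:left-adjoint}~(1)). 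Hence $\varinjlim_{\clH} X_\alpha \cong \varinjlim_{\clH} G(M_\alpha) \cong G(M) = \mathbb{L}G(M)$.

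For the second assertion about cohomology, I would use Lemma~\ref{L:counit}: choose functorial projective resolutions $P_\alpha$ of the $M_\alpha$ lifting the direct system, so that $\mathbb{L}G(M) \cong (\varinjlim_{\Ch(S)} P_\alpha)\otimes_S T \cong \varinjlim_{\Ch(R)}(P_\alpha \otimes_S T)$, the last isomorphism holding in $\D(R)$ and even at the level of complexes since the tensor functor commutes with direct limits. Taking cohomology commutes with direct limits of complexes (direct limits are exact in $\Modr R$), so $H^{-i}(\mathbb{L}G(M)) \cong \varinjlim_{\Modr R} H^{-i}(P_\alpha \otimes_S T) = \varinjlim_{\Modr R} H^{-i}(\mathbb{L}G(M_\alpha)) \cong \varinjlim_{\Modr R} H^{-i}(X_\alpha)$, where the last step again uses $\mathbb{L}G(M_\alpha) \cong X_\alpha$. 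Combining with the first part, $H^{-i}(\varinjlim_{\clH} X_\alpha) \cong \varinjlim_{\Modr R} H^{-i}(X_\alpha)$.

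The main obstacle I anticipate is the bookkeeping in the morphism-level identification: making sure that the isomorphisms $G(M_\alpha) \cong X_\alpha$ can be chosen \emph{compatibly} with the transition maps, so that $G$ applied to the direct system of $M_\alpha$'s really is (isomorphic as a direct system to) the original system of $X_\alpha$'s. This is where naturality of the counit $\varepsilon\colon \mathbb{L}G\,\mathbb{R}H \Rightarrow \mathrm{id}_{\D(R)}$ is essential, and one must be slightly careful that the direct system $(M_\alpha; g_{\beta\alpha})$ is literally obtained by applying the functor $H_T$ to $(X_\alpha; f_{\beta\alpha})$, so that $\varepsilon$ evaluated on the $X_\alpha$ gives a morphism of direct systems. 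Everything else — colimit-preservation by left adjoints, exactness of direct limits in module categories, commutation of $-\otimes_S T$ with direct limits — is standard.
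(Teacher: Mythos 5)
Your proposal is correct and takes essentially the same route as the paper: the first claim is obtained exactly as you describe, from Theorem~\ref{T:E-hereditary} (exactness of $G\cong F$, preservation of colimits by the left adjoint) together with the counit isomorphism $\mathbb{L}G(M_\alpha)\cong X_\alpha$ coming from full faithfulness. For the cohomology statement the paper simply writes $H^{-i}(\mathbb{L}G(M))\cong\Tor^S_i(M,T)$ and uses that $\Tor^S_i(-,T)$ commutes with direct limits, which is the same computation you carry out via Lemma~\ref{L:counit} at the level of complexes.
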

 \begin{proof} By Theorem~\ref{T:E-hereditary} the restriction of the functor $\mathbb L G$ to $\Modr S$ is exact and left adjoint of the fully faithful functor $\Hom_{\clH}(T, -)$.
  Thus, $\mathbb L G (\varinjlim\limits_{\Modr S}M_{\alpha})\cong \varinjlim\limits_{\clH} \mathbb L G (M_{\alpha})$ and  for every $\alpha$ we have $\mathbb L G (M_{\alpha})\cong X_{\alpha}$. Hence the conclusion.

 In particular,  \[H^{-i}(\varinjlim\limits_{\clH} X_{\alpha})\cong \Tor^S_i(\varinjlim\limits_{\Modr S}M_{\alpha}, T)\cong \varinjlim\limits_{\Modr R} \Tor^S_i(M_{\alpha}, T)\cong\varinjlim\limits_{\Modr R}H^{-i}(X_{\alpha}) .\]
 \end{proof}

We show now that the last statement in Proposition~\ref{P:direct-limits} gives indeed a characterization of the Grothendieck condition of $\clH$.

\begin{thm}\label{T:homology-in-limits} The heart $\clH$ is a Grothendieck category if and only if for every direct system $(X_{\alpha}; f_{\beta\alpha})$ of objects of $\clH$
\[(\ast)\quad H^{-i}(\varinjlim\limits_{\clH} X_{\alpha})\cong\varinjlim\limits_{\Modr R}H^{-i}(X_{\alpha})\] for every $i\in \bbZ$.
\end{thm}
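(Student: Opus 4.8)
The plan is to prove both implications, where the forward direction is essentially Proposition~\ref{P:direct-limits} and the reverse direction is the new content.

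\emph{Forward direction.} If $\clH$ is a Grothendieck category, then the conclusion $(\ast)$ is exactly the last statement of Proposition~\ref{P:direct-limits}, so nothing remains to be done here.

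\emph{Reverse direction.} Assume $(\ast)$ holds for every direct system in $\clH$. In view of Theorem~\ref{T:E-hereditary}, it suffices to show that for every right $S$-module $M$ one has $\mathbb L G(M)\in \clH$; equivalently, by Lemma~\ref{L:hom-in-degree-0} (or the description of $\clH$), that $H^{-i}(\mathbb L G(M)) = \Tor^S_i(M,T) = 0$ for $i<0$ (automatic) and that $\mathbb L G(M)$ lies in $\clH$. The idea is to write an arbitrary $S$-module $M$ as a direct limit of finitely presented $S$-modules, $M = \varinjlim_{\Modr S} M_\alpha$ with each $M_\alpha$ finitely presented. For a finitely presented $M_\alpha$ we do \emph{not} automatically have $\mathbb L G(M_\alpha)\in\clH$, so instead I would first reduce to showing that the \emph{heart is closed under direct limits of objects of a generating family that is already known to lie in $\clH$}. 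Concretely: the objects $\mathbb L G(P)$ for $P$ a finitely generated projective $S$-module lie in $\Add T[0]\subseteq \clH$, and every $\mathbb L G(M)$ is a direct limit (in $\D(R)$, along a suitable directed system of complexes as in Lemma~\ref{L:counit}) of such objects only after passing to a projective resolution — which is where the subtlety lies.

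The cleaner route, which I expect to work, is: take $M\in\Modr S$ arbitrary and write $M=\varinjlim_{\Modr S}M_\alpha$ over a directed system. Using the left adjoint $b\colon\U\to\clH$ (Remark~\ref{R:Saorin}) and the composite $F=b\circ G$ from Proposition~\ref{P:left-adjoint}(3), set $X_\alpha := F(M_\alpha)\in\clH$; since $F$ is a left adjoint it preserves direct limits, so $\varinjlim_{\clH}X_\alpha \cong F(M)$. Apply hypothesis $(\ast)$ to this system: $H^{-i}(F(M)) \cong \varinjlim_{\Modr R} H^{-i}(X_\alpha) = \varinjlim_{\Modr R} H^{-i}(F(M_\alpha))$. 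Now compute $H^{-i}(F(M_\alpha))$. Since $F(M_\alpha) = b(G(M_\alpha))$ and $G(M_\alpha) = \mathbb L G(M_\alpha) = M_\alpha\Lotimes_S T$, and because $b$ is the left adjoint realized via the truncation/triangle in Remark~\ref{R:Saorin}, one checks that $H^{-i}(b(Z))$ agrees with $H^{-i}(Z)$ in the relevant range and differs only by correcting $H^0$; in particular for $i>0$ we get $H^{-i}(F(M_\alpha)) = H^{-i}(\mathbb L G(M_\alpha)) = \Tor^S_i(M_\alpha,T)$. Taking the direct limit over $\alpha$ and using that $-\Lotimes_S T$ commutes with direct limits of modules and that $\Tor$ commutes with direct limits, we obtain $H^{-i}(F(M)) \cong \varinjlim_\alpha \Tor^S_i(M_\alpha,T) \cong \Tor^S_i(M,T) = H^{-i}(\mathbb L G(M))$ for all $i$. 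Hence the natural morphism $\mathbb L G(M)\to F(M)$ (the unit/counit comparison) is a quasi-isomorphism, so $\mathbb L G(M)\cong F(M)\in\clH$. By Theorem~\ref{T:E-hereditary}, condition (3) holds, so $\clH$ is a Grothendieck category.

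The main obstacle, and the point requiring care, is the bookkeeping around the functor $b$ and the comparison morphism $\mathbb L G(M)\to F(M) = b(G(M))$: one must verify that this comparison is an isomorphism in cohomology in \emph{all} degrees, not just the negative ones, using the concrete construction of $b$ via the truncation triangle $U\to X[-1]\to Z\to U[1]$. Equivalently, one must check that for $Z=\mathbb L G(M)\in\U$, the object $b(Z)$ has the same homology as $Z$ in degrees $-1,\dots,-n$ (this is where the hypothesis on $H^{-i}$ of direct limits gets leveraged) while $H^0$ is the appropriate correction term — and that after passing to the direct limit the correction term becomes trivial, i.e.\ $\Tor^S_0(M,T)\cong M\otimes_S T$ already sits in degree $0$ consistently. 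A safe way to organize this is to first establish the auxiliary fact that $\clH$ being closed under the direct limits arising as $\varinjlim F(M_\alpha)$ forces $H^{-i}$ of these limits to be computed termwise by $\Tor$, and then invoke Theorem~\ref{T:E-hereditary}(3) directly; the directedness of the system of finitely presented modules and the exactness of filtered colimits in $\Modr R$ do the rest.
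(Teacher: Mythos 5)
Your forward direction is fine (it is exactly Proposition~\ref{P:direct-limits}), but the reverse direction has a genuine gap at precisely the step you flag: the claim that $H^{-i}(F(M_\alpha))=H^{-i}(\mathbb L G(M_\alpha))=\Tor^S_i(M_\alpha,T)$ for $i>0$. Writing $F=b\circ G$ and $X=\mathbb L G(M_\alpha)\in\U$, the reflection $b(X)$ sits in a triangle $\tau^{\le -1}X\to X\to b(X)\to$ with $\tau^{\le -1}X\in\U[1]$, i.e.\ represented by a complex with entries in $\Add T$ in degrees $\le -1$. Such an object has vanishing cohomology only in degrees $\ge 0$, so the long exact sequence shows that $b$ preserves $H^0$ but in general alters the cohomology in \emph{negative} degrees --- the opposite of your assertion that it ``differs only by correcting $H^0$''. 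Concretely, for $n=1$ one computes $H^{-1}\bigl(b(\mathbb L G(M))\bigr)\cong \Tor^S_1(M,T)/t\bigl(\Tor^S_1(M,T)\bigr)$, where $t$ is the torsion radical of the tilting torsion pair; this agrees with $\Tor^S_1(M,T)$ exactly when $\mathbb L G(M)$ already lies in $\clH$, which is what you are trying to prove. So the identification of $H^{-i}(F(M_\alpha))$ with $\Tor$ is circular, the passage to the limit collapses, and the conclusion that $\mathbb L G(M)\to F(M)$ is a quasi-isomorphism does not follow; hypothesis $(\ast)$ only relates $H^{-i}(F(M))$ to $\varinjlim_{\Modr R} H^{-i}(F(M_\alpha))$, and those terms remain uncomputable without the very membership $\mathbb L G(M_\alpha)\in\clH$. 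The ``safe way to organize this'' sketched at the end does not address this point.

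For comparison, the paper's proof never passes through $\Modr S$ or Theorem~\ref{T:E-hereditary}: following Parra--Saor\'{\i}n, it verifies exactness of direct limits in $\clH$ directly. Given a short exact sequence of direct systems in $\clH$, right exactness of $\varinjlim_{\clH}$ yields $\varinjlim X_{\alpha}\overset{f}\to \varinjlim Y_{\alpha}\to \varinjlim Z_{\alpha}\to 0$; applying $H^{-i}$ to the associated triangles and using $(\ast)$ together with exactness of filtered colimits in $\Modr R$, one obtains a commutative ladder forcing the canonical map $\varinjlim_{\Modr R}H^{-i}(X_{\alpha})\to H^{-i}(\Img f)$ to be an isomorphism; since it factors through $H^{-i}(\varinjlim_{\clH}X_{\alpha})$, the projection $p\colon\varinjlim_{\clH}X_{\alpha}\to\Img f$ is an isomorphism and $\Ker f=0$. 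As $T$ is a projective generator of the cocomplete abelian category $\clH$ (Proposition~\ref{P:resol-dimension}), AB5 then gives the Grothendieck property. If you wish to keep your route via Theorem~\ref{T:E-hereditary}(3), you would first need some argument of this AB5 type to control the cohomology of the reflections $F(M_\alpha)$, at which point the detour through $S$-modules is no longer needed.
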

\begin{proof} Only the sufficiency needs to be proved.
 We follow the arguments as in the proof of \cite[Proposition 3.4]{PS15}.  Let $0\to \{X_{\alpha}\}\to \{Y_{\alpha}\}\to \{Z_{\alpha}\}\to 0$ be an exact sequence of direct systems of objects in $\clH$.
 Since the direct limit functor is right exact being a  left adjoint, there is and exact sequence 
  \[ \varinjlim\limits_{\clH} X_{\alpha}\overset f \to \varinjlim\limits_{\clH} Y_{\alpha}\overset g \to \varinjlim\limits_{\clH} Z_{\alpha}\to 0,\] 
  giving rise to short exact sequences:  $0\to \Img f\to \varinjlim\limits_{\clH} Y_{\alpha}\to \varinjlim\limits_{\clH} Z_{\alpha}\to 0$ and
$0\to \Ker f \to\varinjlim\limits_{\clH} X_{\alpha}\overset p\to \Img f\to 0$. Applying the cohomological functor $H$ to the triangles in $\D(R)$ corresponding to the above exact sequences and using the fact that direct limits are exact in $\Modr R$, we obtain a commutative diagram of $R$-modules:
 \[\xymatrix{\dots\ar[r]&\varinjlim\limits_{\Modr R} H^{-i-1}(Z_{\alpha})\ar[d]_{\cong}\ar[r]&\varinjlim\limits_{\Modr R} H^{-i}(X_{\alpha})\ar[r]\ar[d]_h&\varinjlim\limits_{\Modr R} H^{-i}(Y_{\alpha})\ar[d]^{\cong}\ar[r]&\dots\\
 \dots\ar[r]&H^{-i-1}(\varinjlim\limits_{\clH} (Z_{\alpha})\ar[r]&H^{-i}(\Img f)\ar[r]&H^{-i}(\varinjlim\limits_{\clH} Y_{\alpha})\ar[r]&\dots
 }\]
thus $h$ is an isomorphism. Note that $h$ factors as \[\varinjlim\limits_{\Modr R} H^{-i}(X_{\alpha})\to H^{-i}(\varinjlim\limits_{\clH} (X_{\alpha})\overset{H^{-i}(p)}\to H^{-i}(\Img f)\] showing that  $H^{-i}(p)$ is an isomorphism for every $i\in \bbZ$, hence $p$ is an isomorphism and thus $\Ker f=0$.

\end{proof}

If $\clH$ is a Grothendieck category we show that some direct limits in $\clH$ can be computed in $\Ch (R)$.

 \begin{prop}\label{P:limits-in-Ch} Let
$(X_{\alpha}; f_{\beta\alpha})_{\alpha\in \Lambda}$ be a direct system in $\Ch (R)$
such that  $X_{\alpha}\in \clH$ for every $\alpha\in \Lambda$.
If $\clH$ is a Grothendieck category then:  
\[\varinjlim\limits_{\Ch (R)} X_{\alpha}\cong \varinjlim\limits_{\clH} X_{\alpha}\] in $\D(R)$.
\end{prop}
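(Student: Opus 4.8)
The plan is to compare the two direct limits by using the universal property of $\varinjlim_{\Ch(R)}$ together with the identification of $\varinjlim_{\clH}$ provided by Proposition~\ref{P:direct-limits}. Since the homotopy category $\D(R)$ receives a canonical functor from $\Ch(R)$ sending $\varinjlim_{\Ch(R)} X_\alpha$ to an object $L$ equipped with compatible maps $X_\alpha \to L$ in $\D(R)$, and since each $X_\alpha \in \clH$, there is an induced morphism $\theta \colon \varinjlim_{\clH} X_\alpha \to L$ in $\D(R)$; I want to show $\theta$ is an isomorphism. To do this, the natural strategy is to compare cohomologies: by Proposition~\ref{P:direct-limits}, $H^{-i}(\varinjlim_{\clH} X_\alpha) \cong \varinjlim_{\Modr R} H^{-i}(X_\alpha)$, while on the other side, since direct limits are exact in $\Modr R$ and cohomology of a complex commutes with (filtered) colimits of complexes, $H^{-i}(\varinjlim_{\Ch(R)} X_\alpha) \cong \varinjlim_{\Modr R} H^{-i}(X_\alpha)$ as well. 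So both sides have the same cohomology, abstractly.

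The key step is to check that $\theta$ actually \emph{induces} these isomorphisms on cohomology, not merely that source and target have abstractly isomorphic cohomology. For this I would trace through the construction: the maps $X_\alpha \to \varinjlim_{\Ch(R)}X_\alpha$ are honest chain maps, so they induce $H^{-i}(X_\alpha) \to H^{-i}(\varinjlim_{\Ch(R)}X_\alpha)$, and the comparison isomorphism $\varinjlim_{\Modr R}H^{-i}(X_\alpha)\cong H^{-i}(\varinjlim_{\Ch(R)}X_\alpha)$ is precisely the map induced by this cocone. On the heart side, the cocone $X_\alpha \to \varinjlim_{\clH}X_\alpha$ induces, via the cohomological functor $H^{-i}$, maps $H^{-i}(X_\alpha) \to H^{-i}(\varinjlim_{\clH}X_\alpha)$, and the content of Proposition~\ref{P:direct-limits} (via $H^{-i} \cong \Tor^S_i(-,T)$ and commutation of $\Tor$ with direct limits) is that the induced map $\varinjlim_{\Modr R} H^{-i}(X_\alpha) \to H^{-i}(\varinjlim_{\clH}X_\alpha)$ is an isomorphism. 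Since $\theta$ is the unique morphism compatible with the two cocones, $H^{-i}(\theta)$ is compatible with the two cocone-induced maps out of $\varinjlim_{\Modr R}H^{-i}(X_\alpha)$, and hence $H^{-i}(\theta)$ is the composite of the two isomorphisms above — so $H^{-i}(\theta)$ is an isomorphism for every $i$. A morphism in $\D(R)$ inducing isomorphisms on all cohomology is an isomorphism, so $\theta$ is an isomorphism and the result follows.

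The main obstacle I anticipate is the bookkeeping in the previous paragraph: making sure that the morphism $\theta \colon \varinjlim_{\clH}X_\alpha \to \varinjlim_{\Ch(R)}X_\alpha$ genuinely exists and is canonical. One must observe that $\varinjlim_{\Ch(R)}X_\alpha$, viewed in $\D(R)$, together with the chain maps from the $X_\alpha$, forms a cocone on the direct system $(X_\alpha; f_{\beta\alpha})$ in $\D(R)$; since all $X_\alpha$ lie in $\clH$ and $\clH$ is closed under the relevant colimits (being Grothendieck, and by Proposition~\ref{P:coproduct-closure} closed under coproducts in $\D(R)$, hence the colimit $\varinjlim_{\clH}X_\alpha$ is computed compatibly with $\D(R)$ in the sense that $\varinjlim_{\clH}$ is the reflection of the $\D(R)$-colimit), the universal property of $\varinjlim_{\clH}X_\alpha$ as an object of $\clH$ supplies the map $\theta$. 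Concretely: apply $\iota \colon \clH \hookrightarrow \D(R)$, and since $\varinjlim_{\Ch(R)}X_\alpha$ receives a cocone from the $X_\alpha$ in $\D(R)$, and $b$ (the left adjoint from Remark~\ref{R:Saorin}, or the reflection onto $\clH$) produces the comparison; alternatively, one invokes Proposition~\ref{P:direct-limits} to write $\varinjlim_{\clH}X_\alpha \cong \mathbb L G(M)$ with $M = \varinjlim_{\Modr S}\Hom_{\clH}(T,X_\alpha)$, and uses the counit-type isomorphisms of Lemma~\ref{L:counit} to identify $\mathbb L G(M)$ with $\varinjlim_{\Ch(R)}(P_\alpha \otimes_S T)$, then compares the latter with $\varinjlim_{\Ch(R)}X_\alpha$ via the quasi-isomorphisms $P_\alpha \otimes_S T \simeq X_\alpha$ (valid since each $X_\alpha \cong \mathbb L G(M_\alpha) \cong P_\alpha \otimes_S T$ in $\D(R)$). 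The cohomology comparison then closes the argument, and routine verification that the quasi-isomorphisms assemble into a map of direct systems in $\Ch(R)$ — which can be arranged by choosing the projective resolutions $P_\alpha$ functorially as in Lemma~\ref{L:counit} — completes the proof.
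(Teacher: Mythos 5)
Your proposal is correct and, in its concrete form (identifying $\varinjlim_{\clH}X_\alpha\cong\mathbb L G(M)\cong\varinjlim_{\Ch(R)}(P_\alpha\otimes_ST)$ via Proposition~\ref{P:direct-limits} and Lemma~\ref{L:counit}, assembling the quasi-isomorphisms $X_\alpha\simeq P_\alpha\otimes_ST$ into a chain-level map of direct systems, and concluding by comparing cohomologies through Proposition~\ref{P:direct-limits} and the exactness of $\varinjlim$ in $\Modr R$), is essentially the paper's own proof. Your opening ``universal property'' framing is looser than it sounds --- filtered colimits do not exist in $\D(R)$, so the comparison map must be produced at the chain level exactly as in your fallback argument --- but you correctly identify and resolve this.
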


\begin{proof} Consider the direct system  $(X_{\alpha}; q( f_{\beta\alpha}))_{\alpha\in \Lambda}$ in $\clH$ where $q$ is the canonical quotient functor $q\colon \Ch (R)\to \D(R)$. Let  $(M_{\alpha}; g_{\beta\alpha})_{\alpha\in \Lambda}$ be the direct system of right $S$-modules obtained by applying the functor $\Hom_{\clH}(T,-)$ to  $(X_{\alpha}; q( f_{\beta\alpha}))_{\alpha\in \Lambda}$. By Proposition~\ref{P:direct-limits}.
$\varinjlim\limits_{\clH} X_{\alpha}\cong \mathbb L G(M)$, where $M=\varinjlim\limits_{\Modr S}M_{\alpha}$.
By Lemma~\ref{L:counit}, there are projective resolutions $P_{\alpha}$ of $M_{\alpha}$ and a direct system $(P_{\alpha};\tilde{g}_{\beta\alpha})_{\alpha\in \Lambda}$ in $\Ch(S)$ such that $P_{\alpha}\otimes_ST\cong M_{\alpha}\Lotimes_ST$ and 
\[\varinjlim\limits_{\clH} X_{\alpha}\cong\mathbb L G(M)\cong (\varinjlim\limits_{\Ch (S)}P_{\alpha})\otimes_ST\cong \varinjlim\limits_{\Ch (R)}(P_{\alpha}\otimes_ST).\]
By Proposition~\ref{P:left-adjoint} and its proof, the functor  $\Hom_{\clH}(T,-)$ is isomorphic to $\RHom_R(T, -)$ and it is fully faithful. Thus, the counit morphism \[-\Lotimes_ST\circ  \Hom_{\clH}(T,-)\] is invertible (see Fact~\ref{F:good-tilting}) showing that  $X_{\alpha}\cong M_{\alpha}\Lotimes_ST\cong P_{\alpha}\otimes_ST$ in $\clH$, for every $\alpha\in \Lambda$. Let $\phi_{\alpha}\colon X_{\alpha}\to P_{\alpha}\otimes_ST$ be an isomorphism and let $\psi_{\alpha}$ be a chain map in $\Ch(R)$ such that $q(\psi_{\alpha})=\phi_{\alpha}$. The map
\[\psi=\varinjlim\limits_{\alpha}\psi_{\alpha}=\colon\varinjlim\limits_{\Ch (R)} X_{\alpha}\to\varinjlim\limits_{\Ch (R)}(P_{\alpha}\otimes_ST)\] is a chain map in $\Ch (R)$.
Consider the morphisms:
\[ \varinjlim\limits_{\Modr R}H^{-i}(X_{\alpha})\overset{g}\to H^{-i}(\varinjlim\limits_{\Ch (R)}X_{\alpha})\overset{H^{-i}(\psi)}\to H^{-i}(\varinjlim\limits_{\clH}X_{\alpha}),\]
where $g$ is a canonical isomorphism by the exactness of the direct limit in $\Modr R$ and the composition $H^{-i}(\psi)\circ g$ is an isomorphism by Proposition~\ref{P:direct-limits}. Hence $H^{-i}(\psi)$ is an isomorphism for every $i\in \bbZ$ implying that $\psi$ is an isomorphism in $\D(R)$. 
\end{proof}
\section{The pure projectivity}\label{S:7}

 In this section we translate the Grothendieck condition on the category $\clH$ in terms of properties of subcategories of $\Modr R$ in order to be able to pin down  conditions on the tilting module $T_R$ itself.

First we prove a result which is a consequence of Proposition~\ref{P:limits-in-Ch}, where $\tr_T$ denotes the trace in the module $T$.
 \begin{prop}\label{P:trace-in-limits} Assume that $\clH$ is a Grothendieck category. The following hold true:\begin{enumerate}
 \item
If $(N_{\alpha}; f_{\beta\alpha})_{\alpha\in \Lambda}$ is a direct system of $R$-modules, then
 \[ \tr_T(\varinjlim_{i\in I}N_{\alpha})\cong \varinjlim_{i\in I}\tr_T(N_{\alpha}).\]
 
 In particular, the torsion free class $T^{\perp_0}$ is closed under direct limits, hence it is a definable class.
 \item For every $i\geq 1$, the classes $T^{\perp_i}$ are closed under direct limits.
 \end{enumerate}
 \end{prop}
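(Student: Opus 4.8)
The plan is to exploit Proposition~\ref{P:limits-in-Ch} together with the explicit complexes $X_N$ constructed in Proposition~\ref{P:N-in-orthogonal}, which realize the trace $\tr_T(N)$ as a cohomology of an object of $\clH$. First I would reduce (1) to a direct-limit computation inside $\clH$. Given a direct system $(N_\alpha; f_{\beta\alpha})$ of $R$-modules, the functorial choices in Notation~\ref{N:preenvelopes} allow one to build the complexes $X_{N_\alpha}$ of Proposition~\ref{P:N-in-orthogonal} functorially in $N_\alpha$, hence to obtain a direct system $(X_{N_\alpha}; \tilde f_{\beta\alpha})$ in $\Ch(R)$ with each $X_{N_\alpha}\in\clH$. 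Since $\clH$ is assumed Grothendieck, Proposition~\ref{P:limits-in-Ch} gives $\varinjlim_{\Ch(R)} X_{N_\alpha}\cong\varinjlim_{\clH} X_{N_\alpha}$ in $\D(R)$, and Proposition~\ref{P:direct-limits} (or Theorem~\ref{T:homology-in-limits}) then yields $H^{-i}(\varinjlim_{\clH} X_{N_\alpha})\cong\varinjlim_{\Modr R} H^{-i}(X_{N_\alpha})$ for every $i$.

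Next I would identify the relevant cohomology. By the construction in Proposition~\ref{P:N-in-orthogonal}, the complex $X_N$ in degrees $\le 0$ has $H^0(X_N)\cong B^1/\Img(B^0\to B^1)$ and, crucially, the cohomology in the degree where the $T^{(\alpha_2)}$-term maps into $B^0$ computes $\Img(T^{(\Hom(T,N))}\to B^0)$, which is exactly $\tr_T(N)$ viewed inside $B^0$; equivalently one reads off a short exact sequence exhibiting $\tr_T(N)$ as a subquotient of the terms of $X_N$. The point is that direct limits of the diagram $(X_{N_\alpha})$ of complexes compute, degreewise, the corresponding images and kernels, and these are preserved by $\varinjlim$ in $\Modr R$ because direct limits are exact there. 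Combining this with the isomorphism $H^{-i}(\varinjlim_{\clH} X_{N_\alpha})\cong\varinjlim_{\Modr R} H^{-i}(X_{N_\alpha})$ forces $\tr_T(\varinjlim N_\alpha)\cong\varinjlim\tr_T(N_\alpha)$. The inclusion $\varinjlim\tr_T(N_\alpha)\subseteq\tr_T(\varinjlim N_\alpha)$ is automatic; the content is the reverse inclusion, and it is precisely this that the Grothendieck hypothesis supplies. That $T^{\perp_0}$ is then closed under direct limits follows because $N\in T^{\perp_0}$ iff $\tr_T(N)=N$ (equivalently $N\in\Gen T$ in the relevant sense), a condition manifestly preserved once trace commutes with $\varinjlim$; definability then follows since $T^{\perp_0}$ is already closed under products and pure submodules, being the torsion-free class of the torsion pair in Corollary~\ref{C:torsion}.

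For part (2), I would argue by induction on $i$ exactly as in Proposition~\ref{P:coproduct-closure}(2), replacing $\oplus$ by $\varinjlim$. For $i=1$: given a direct system $(N_\alpha)$ in $T^{\perp_1}$, use the complexes $X_{N_\alpha}$ of Proposition~\ref{P:N-in-orthogonal}(2) (which live in $\clH$ precisely because $N_\alpha\in T^{\perp_1}$), pass to the limit via Proposition~\ref{P:limits-in-Ch}, and read off from Lemma~\ref{L:description}(2)(b) that the limiting complex being in $\clH$ forces $\Ext^1_R(T,\varinjlim N_\alpha)=0$. For the inductive step, choose functorial special $T^\perp$-preenvelopes $0\to N_\alpha\to B_\alpha\to A_\alpha\to 0$ with $A_\alpha\in T^{\perp_{i-1}}$; taking $\varinjlim$ gives an exact sequence with $\varinjlim B_\alpha\in T^\perp$ (the tilting class is definable, hence closed under direct limits), so $\Ext^i_R(T,\varinjlim N_\alpha)\cong\Ext^{i-1}_R(T,\varinjlim A_\alpha)=0$ by induction.

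The main obstacle I anticipate is the bookkeeping in the second paragraph: making precise which cohomology module of $X_N$ equals $\tr_T(N)$ and checking that the direct limit of the diagram of complexes $(X_{N_\alpha})$ really does compute $\tr_T(\varinjlim N_\alpha)$ in that slot rather than some a priori larger subquotient. This is where one must carefully use the functoriality of all the preenvelope and trace constructions in Notation~\ref{N:preenvelopes}, together with the exactness of $\varinjlim$ on $\Modr R$, to guarantee that no extra limit terms intrude. Once that identification is pinned down, everything else is a formal consequence of Proposition~\ref{P:limits-in-Ch} and the already-established behaviour of homology under limits in $\clH$.
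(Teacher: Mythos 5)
Your overall route coincides with the paper's: functorially chosen complexes $X_{N_\alpha}$ from Proposition~\ref{P:N-in-orthogonal}, identification of the limit in $\Ch(R)$ with the limit in $\clH$ via Proposition~\ref{P:limits-in-Ch}, and, for part (2), the base case $i=1$ via the complexes of Proposition~\ref{P:N-in-orthogonal}(2) followed by induction using functorial special $T^\perp$-preenvelopes and closure of $T^\perp$ under direct limits. Part (2) as you wrote it is essentially the paper's argument and is fine.

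The decisive step of part (1), however, is left open, and the tool you propose for it cannot close it. The trace is not a cohomology module of $X_N$: in the complex of Proposition~\ref{P:N-in-orthogonal} one has $\Ker d^{-1}=N$ and $\Img d^{-2}=\tr_T(N)$, so $H^{-1}(X_N)\cong N/\tr_T(N)$; your claim that the cohomology in that degree ``computes $\Img(T^{(\Hom(T,N))}\to B^0)$'' is not correct, the trace occurs as a boundary, not as cohomology. Consequently the isomorphism $H^{-i}(\varinjlim_{\clH}X_{N_\alpha})\cong\varinjlim_{\Modr R}H^{-i}(X_{N_\alpha})$ cannot ``force'' $\tr_T(\varinjlim N_\alpha)\cong\varinjlim\tr_T(N_\alpha)$: once Proposition~\ref{P:limits-in-Ch} identifies $\varinjlim_{\clH}X_{N_\alpha}$ with the degreewise limit complex, that homology isomorphism only says $\varinjlim N_\alpha/\varinjlim\tr_T(N_\alpha)\cong\varinjlim\bigl(N_\alpha/\tr_T(N_\alpha)\bigr)$, which holds automatically by exactness of direct limits in $\Modr R$ and carries no information about the trace; in particular it does not give the reverse inclusion $\tr_T(\varinjlim N_\alpha)\subseteq\varinjlim\tr_T(N_\alpha)$, which is exactly the point you flag as unresolved. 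What closes the gap, and is what the paper does, is that the limit complex $X=\varinjlim_{\Ch(R)}X_{N_\alpha}$ is isomorphic in $\D(R)$ to an object of $\clH$, hence lies in $\clH$ and therefore satisfies condition (2)(a) of Lemma~\ref{L:description}: the trace of $T$ in $\Ker d_X^{-1}=\varinjlim N_\alpha$ equals $\Img d_X^{-2}=\varinjlim\tr_T(N_\alpha)$. This is the same mechanism you correctly invoke via clause (2)(b) in your $i=1$ argument for part (2); applying clause (2)(a) to the limit complex is the one-line repair of your part (1).
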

 \begin{proof} 
 (1) For each module $N_{\alpha}$ choose functorially a complex $X_{\alpha}\in \clH$ as constructed in Proposition~\ref{P:N-in-orthogonal}~(1). Then $\Ker d_{X_{\alpha}}^{-1}=N_{\alpha}$ and  $\Img d_{X_{\alpha}}^{-2}=\tr_T(N_{\alpha})$. By functoriality we obtain a direct system $(X_{\alpha}; \tilde f_{\beta\alpha})_{\alpha\in \Lambda}$ in $\Ch (R)$ and also a direct system $(X_{\alpha}; q(\tilde f_{\beta\alpha}))_{\alpha\in \Lambda}$ in $\clH$ where $q$ is the canonical quotient functor $q\colon \Ch (R)\to \D(R)$. Let $X=\varinjlim\limits_{\Ch (R)} X_{\alpha}$. 
 By Proposition~\ref{P:limits-in-Ch}, $\varinjlim\limits_{\clH} X_{\alpha}\cong X.$ 
 
$X$ is a complex with terms in $T^\perp$, since an $n$-tilting class is closed under direct limits (see\cite{BS07}).
We have: $\Ker d_X^{-1}= \varinjlim\limits_{\Modr R} N_{\alpha}$; $\Img d_X^{-2}=\varinjlim\limits_{\Modr R} \tr_T(N_{\alpha})$ and by Lemma~\ref{L:description}~(2), $\Img d_X^{-2}=\tr_T(\Ker d_X^{-1})=\tr_T( \varinjlim\limits_{\Modr R} N_{\alpha})$, since $X\in \clH$. Hence the conclusion.

The last statement follows immediately by $(\ast)$. 

(2) We first prove that the class $T^{\perp_1}$ is closed under direct limits.

Let $(N_{\alpha}; f_{\beta\alpha})_{\alpha\in \Lambda}$ be a direct system of $R$-modules in $T^\perp_1$. For each module $N_{\alpha}$ choose functorially a complex $X_{\alpha}\in \clH$ as constructed in Proposition~\ref{P:N-in-orthogonal}~(2) so that $N_{\alpha}=\Ker d^{-2}_{X_{\alpha}}$. Arguing as in part (1) we get a direct system $\{X_{\alpha}\}_{\alpha\in \Lambda}$ both in $\Ch (R)$ and in $\clH$ whose direct limit in $\clH$ is isomorphic to  $X=\varinjlim\limits_{\Ch (R)} X_{\alpha}$, by Proposition~\ref{P:limits-in-Ch}. Now, $\Ker d^{-2}_X\cong \varinjlim\limits_{\Modr R} \Ker d^{-2}_{X_{\alpha}}$ and by Lemma~\ref{L:description}~(2), the latter belongs to $T^\perp_1$.

By induction we get that $T^{\perp_i}$ is closed under direct limits for every $i\geq 1$. In fact, let $(N_{\alpha}; f_{\beta\alpha})_{\alpha\in \Lambda}$ is a direct system of $R$-modules in $T^{\perp_i}$, with $i>1$ and choose functorially  special $T^\perp$-preenvelopes of $N_{\alpha}$ of the form $0\to N_{\alpha}\to B_{\alpha}\to A_{\alpha}\to 0$, with $B_{\alpha}\in T^\perp$ and $A_{\alpha}\in \A$. Then $A_{\alpha}\in T^{\perp_{i-1}}$. We obtain a short exact sequence 
\[0\to \varinjlim\limits_{\alpha} N_{\alpha}\to \varinjlim\limits_{\alpha} B_{\alpha}\to \varinjlim\limits_{\alpha} A_{\alpha}\to 0.\]

Since $T^\perp$ is closed under direct limits, $\varinjlim\limits_{\alpha} N_{\alpha}\in T^{\perp_{i}}$ if and only if $\varinjlim\limits_{\alpha} A_{\alpha}\in T^{\perp_{i-1}}$. Thus, the conclusion follows by induction.

\end{proof}
%
%
%
%

We show now that if the heart $\clH$ is a Grothendieck category, then the $n$-tilting module $T$ must be pure projective.

\begin{prop}\label{P:pure-projective} Let $\clH$ be the heart of the $t$-structure induced by an $n$-tilting module $T$. If $\clH$ is a Grothendieck category, then $T$ is a pure projective module.
\end{prop}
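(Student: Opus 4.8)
The plan is to exploit the fact, established in Proposition~\ref{P:trace-in-limits}, that when $\clH$ is a Grothendieck category the $n$-tilting class $T^\perp$ together with all the classes $T^{\perp_i}$, $i\ge 1$, and the torsion-free class $T^{\perp_0}$ are closed under direct limits, and to combine this with the finite-type property of $n$-tilting classes recalled in Section~\ref{s:preliminaries}. The strategy is to produce a pure exact sequence $0\to A\to B\to T\to 0$ in which the outer terms lie in a class we control well enough to force the sequence to split; by Warfield's characterization recalled in the preliminaries, this is exactly what pure projectivity of $T$ means. A natural candidate for $B$ is a direct sum of finitely presented modules mapping onto $T$ with pure kernel, which always exists because every module is a pure epimorphic image of a direct sum of finitely presented modules.

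First I would fix a pure epimorphism $p\colon B\to T$ with $B=\bigoplus_{j}F_j$ a direct sum of finitely presented modules and $A=\Ker p$, so the sequence $0\to A\to B\xrightarrow{p} T\to 0$ is pure exact. The point is then to show this sequence splits. Since $T$ has projective dimension $\le n$, the purity of the sequence gives $A$ projective dimension $\le n$ as well, and moreover — this is the crucial input — because $T^\perp$ is of finite type and hence definable, and because (by Proposition~\ref{P:trace-in-limits}) each $T^{\perp_i}$ is now closed under direct limits, one checks that $A\in T^\perp$: indeed $A$ is a pure submodule of $B$, $B\in T^\perp$ (it is a direct sum of finitely presented modules in $^{\perp_1}(T^\perp)$, or one argues via finite type that a direct sum of finitely presented modules lying in the left-hand class behaves well), and $T^\perp$ is closed under pure submodules by definability; alternatively one writes $B$ as a direct limit and uses closure of the $T^{\perp_i}$ under direct limits together with the long exact sequence to pass the vanishing of $\Ext$ from $B$ and $T$ to $A$. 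In any case one wants to arrive at $A\in T^\perp$, equivalently $\Ext^1_R(T,A)=0$.

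Once $A\in T^\perp$, the extension $0\to A\to B\to T\to 0$ represents a class in $\Ext^1_R(T,A)=0$, so it splits, and $T$ is a direct summand of $B=\bigoplus_j F_j$, a direct sum of finitely presented modules; by Warfield's theorem $T$ is pure projective. So the skeleton is: (1) take a pure epimorphism onto $T$ from a direct sum of finitely presented modules; (2) show the kernel lies in $T^\perp$ using that $T^\perp$ (and the $T^{\perp_i}$) are closed under direct limits and pure submodules, which holds precisely because $\clH$ is Grothendieck; (3) conclude the sequence splits and invoke Warfield.

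The main obstacle I anticipate is step (2): carefully justifying that $\Ext^1_R(T,A)=0$, i.e.\ that the pure kernel $A$ belongs to the tilting class. One has to be a little careful about whether "direct sum of finitely presented modules" automatically lies in $T^\perp$ or only in the left-hand class $\A={}^{\perp_1}(T^\perp)$, and to use the correct combination of the finite-type/definability of $T^\perp$ (closure under pure submodules) with the newly available closure of the higher classes $T^{\perp_i}$ under direct limits coming from Proposition~\ref{P:trace-in-limits}. Writing $B$ as the direct limit of its finite partial sums (which are in $\A$, being finitely presented with bounded projective resolutions) and transporting $\Ext$-vanishing along the exact sequence, while keeping track of all indices $i$ up to $n$, is where the real work lies; everything after that is formal.
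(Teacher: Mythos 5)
There is a genuine gap at your step (2), and it cannot be repaired in the form you propose. For an \emph{arbitrary} pure epimorphism $p\colon B=\bigoplus_j F_j\to T$ with $F_j$ finitely presented, the kernel $A$ need not lie in $T^{\perp_1}$, even when $T$ is pure projective and $\clH$ is Grothendieck. Indeed, nothing forces $B$ itself into $T^\perp$: a direct sum of arbitrary finitely presented modules generally satisfies neither $\Ext^1_R(T,B)=0$ nor membership in $\A={}^{\perp_1}(T^\perp)$ (and the latter would be irrelevant anyway). Concretely, if $T$ is a finitely presented non-projective $1$-tilting module over an Artin algebra, every pure epimorphism onto $T$ splits, so you may take $B=A\oplus T$ with $A$ any finitely presented module not in $\Gen T=T^{\perp_1}$; the sequence is pure exact, $B$ is a direct sum of finitely presented modules, yet $A\notin T^{\perp_1}$. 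Moreover, the long exact sequence route is circular: applying $\Hom_R(T,-)$ to $0\to A\to B\to T\to 0$ gives
\[
\Hom_R(T,B)\xrightarrow{p_*}\Hom_R(T,T)\xrightarrow{\delta}\Ext^1_R(T,A)\to\Ext^1_R(T,B),
\]
and $\delta(\mathrm{id}_T)$ \emph{is} the class of the extension, so proving it vanishes is exactly proving the splitting you are after; you cannot get it from vanishing of $\Ext$ on $B$ (which fails) or on $T$ (which concerns the wrong variable).

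The fix is to choose the presentation of $T$ much more carefully, which is what the paper does. Write $T=\varinjlim_i A_i$ with $A_i$ finitely presented; use Proposition~\ref{P:trace-in-limits}(1) (i.e.\ that $\tr_T$ commutes with direct limits, a consequence of $\clH$ being Grothendieck) to get $T\cong\varinjlim_i\tr_T(A_i)$; and replace your $B$ by the functorial trace presentations $0\to K_i\to T^{(\Hom_R(T,A_i))}\to\tr_T(A_i)\to 0$, whose kernels $K_i$ \emph{do} lie in $T^{\perp_1}$ by construction (Notation~\ref{N:preenvelopes}(2)). Passing to direct limits, $\varinjlim_i K_i\in T^{\perp_1}$ by Proposition~\ref{P:trace-in-limits}(2), so the limit sequence splits; one then lifts the splitting back to the direct sum level using definability of $T^\perp$ (the middle column $0\to D\to\bigoplus_i T^{(\Hom_R(T,A_i))}\to\varinjlim_i T^{(\Hom_R(T,A_i))}\to 0$ is pure, so $D\in T^\perp$), obtaining $T$ as a summand of $\bigoplus_i\tr_T(A_i)$, and finally compares with $\bigoplus_i A_i\to T$ to conclude via Warfield. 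So your overall skeleton (exhibit $T$ as a summand of a direct sum of finitely presented modules) is right, but the decisive input is the pair of closure properties from Proposition~\ref{P:trace-in-limits} applied to the \emph{trace} presentations, not membership of the kernel of a generic pure epimorphism in $T^\perp$.
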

\begin{proof} Write $T$ as a direct limit of a direct system $\{A_i\colon f_{ji}\}_{i\leq j\in I}$ of finitely presented modules. By Proposition~\ref{P:trace-in-limits} $T\cong\varinjlim\limits_{i} \tr_T(A_i)$ and for every $i\in I$ we have a functorial presentation of $\tr_T(A_i)$ given by
\[0\to K_i\to T^{(\Hom(T, A_i))}\to \tr_T(A_i)\to 0,\]
where $K_i\in T^{\perp_1}$. By the functoriality of the presentation we get direct systems $\{K_i\}_{i\in I}$, $\{T^{(\Hom(T, A_i))}\}_{i\in I}$ and $\{\tr_T(A_i)\}_{i\in I}$ giving rise to the following commutative diagram:
 \[\xymatrix{&0\ar[d]&0\ar[d]&0\ar[d]\\
 0\ar[r]&D_1\ar[d]\ar[r]&D\ar[r]\ar[d]&D_2\ar[d]\ar[r]&0\\
 0\ar[r]&\bigoplus\limits_{i\in I}K_i\ar[d]\ar[r]&\bigoplus\limits_{i\in I}T^{(\Hom(T, A_i))}\ar[r]^{\sigma}\ar[d]_{\phi}&\bigoplus\limits_{i\in I}\tr_T(A_i)\ar[d]^{\pi}\ar[r]&0\\
0\ar[r]&\varinjlim\limits_{i\in I} K_i\ar[r]\ar[d]&\varinjlim\limits_{i\in I}T^{(\Hom(T, A_i))} \ar@/_/[r]_{\rho}\ar[d]&\varinjlim\limits_{i\in I}\tr_T(A_i)\cong T\ar@/_/[l]^{\alpha}\ar@{.>}[ul]_{\beta}\ar[d]\ar[r]&0\\\
&0&0&0 }.\]
By Proposition~\ref{P:trace-in-limits}~(2), $\varinjlim\limits_{i\in I} K_i$ is in $T^{\perp_1}$, hence the last row splits that is, there is a morphism $\alpha\colon T\to \varinjlim\limits_{i\in I}T^{(\Hom(T, A_i))} $ such that $\rho\circ\alpha=1_T$. The second column is a pure exact sequence, hence $D\in T^\perp$ since the tilting class $T^\perp$ is definable (by \cite{BS07}). This implies that the morphism $\alpha$ can be lifted to a morphism $\beta$ such that $\phi\circ\beta=\alpha$.
 Now we infer that $\pi\circ \sigma\circ\beta=\rho\circ\phi\circ\beta=\rho\circ \alpha=1_T$ showing that the morphism $\sigma\circ\beta$ gives a splitting map for the third column. We then conclude that $T$ is isomorphic to a direct summand of $\bigoplus\limits_{i\in I}\tr_T(A_i)$. We also have a commutative diagram
  \[\xymatrix{
 0\ar[r]&D_1\ar[d]\ar[r]&\bigoplus\limits_{i\in I}\tr_T(A_i)\ar[r]\ar[d]&\varinjlim\limits_{i\in I}\tr_T(A_i)\cong T\ar[d]_{\cong}\ar[r]&0\\
 0\ar[r]&D'\ar[r]&\bigoplus\limits_{i\in I}A_i\ar[r]&\varinjlim\limits_{i\in I}A_i\cong T\ar[r]&0\\
 }\]
where the first row splits, showing that also the second row splits. This proves that $T$ is pure projective.
 
\end{proof}
\begin{rem}\label{R:pure-projective} Note that the proof of Proposition~\ref{P:pure-projective} shows that if $T$ is an $n$-tilting module, the following two conditions: (1) the  functor $\tr_T$ commutes with direct limit and (2) $T^{\perp_1}$ is closed under direct limits, are sufficient to conclude that $T$ is pure projective.
\end{rem}

We use in argument suggested by Ivo Herzog to prove the converse of the preceding proposition.
\begin{prop}\label{P:pure-projective-suff} Let $T$ be a pure projective $n$-tilting module. Then the heart $\clH$ of the $t$-structure induced by $T$ is a Grothendieck category.
\end{prop}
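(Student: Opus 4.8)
The plan is to verify the characterization of Theorem~\ref{T:E-hereditary}(3): it suffices to show that for every right $S$-module $M$ the complex $M\Lotimes_S T$ lies in $\clH$. By Theorem~\ref{T:E-hereditary} this is equivalent to $\clH$ being a Grothendieck category, so this reduction is the whole content of the proof. First I would reduce, exactly as in the proof of (2)$\Rightarrow$(3) of that theorem, to the case where $M$ has no $\E$-torsion, where $\E=\{M_S\mid\Tor^S_i(M,T)=0\text{ for all }i\ge 0\}$; this is harmless because $M_\E\Lotimes_S T=0$ and hence $M\Lotimes_S T\cong (M/M_\E)\Lotimes_S T$.

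The key idea is to use the hypothesis that $T_R$ is pure projective, i.e.\ (by Warfield's result quoted in the preliminaries) a direct summand of a direct sum of finitely presented modules, to get control of $\Tor^S_i(M,T)$. Here one should work with a \emph{good} $n$-tilting module $U$ equivalent to $T$, so that all of Section~\ref{s:End(T)} applies; note $U$ is pure projective iff $T$ is, since $\Add U=\Add T$. By Fact~\ref{F:good-tilting}, ${}_SU$ admits a finite projective resolution $P_\bullet$ by \emph{finitely generated} projective left $S$-modules. Writing $P_M$ for a projective resolution of $M_S$, we have $M\Lotimes_S T$ represented by $P_M\otimes_S U$, whose cohomology is $\Tor^S_\bullet(M,U)$; because the ${}_SP_i$ are finitely generated projective, $\Tor^S_i(M,U)$ commutes with direct limits in the first variable and, crucially, with the pure-projective decomposition: if $U$ is a summand of $\bigoplus_\lambda F_\lambda$ with $F_\lambda$ finitely presented, then $\Tor^S_i(M,U)$ is a summand of $\Tor^S_i(M,\bigoplus_\lambda F_\lambda)$, and one can try to reduce questions about $M\Lotimes_S T$ to the finitely presented pieces. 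The aim is to show $\Tor^S_i(M,U)\in T^{\perp_0}$ for $1\le i$ and that the cocycle/coboundary conditions of Lemma~\ref{L:description} hold, i.e.\ that $P_M\otimes_S U$ (truncated appropriately, with terms in $\Add U\subseteq T^\perp$) satisfies conditions (1) and (2) of that lemma.

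The step I expect to be the main obstacle is precisely establishing these $T^{\perp_i}$-membership and trace conditions from pure projectivity alone, i.e.\ showing that the complex $P_M\otimes_S U$ is quasi-isomorphic to one satisfying Lemma~\ref{L:description}. The natural route is the argument credited to Ivo Herzog: one realizes $\clH$, via the fully faithful $\Hom_{\clH}(T,-)$ of Proposition~\ref{P:left-adjoint}, as the subcategory $\Ker(\mathbb L G)^\perp\cap\Modr S=\E_{\perp_\infty}$ of $\Modr S$, and must show $\E$ is a hereditary torsion class with $\E_\perp=\E_{\perp_\infty}$ (condition (2) of Theorem~\ref{T:E-hereditary}). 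Closure of $\E$ under submodules is the delicate point: given $0\to M'\to M\to M''\to 0$ with $M,M''\in\E$, one wants $M'\in\E$, i.e.\ $\Tor^S_i(M',U)=0$ for all $i$. The finiteness of ${}_SP_\bullet$ turns this into a finitely-presented-functor question — the functors $\Tor^S_i(-,U)$ are coherent — and pure projectivity of $U$ (so that $U$ is determined up to summands by finitely presented data) is exactly what is needed to propagate vanishing along the sequence; dually one controls $\E_\perp$ versus $\E_{\perp_\infty}$ using that ${}_SU$ has \emph{finite} projective dimension. Once $\E$ is shown to be a torsion-torsion free hereditary class, Theorem~\ref{T:E-hereditary}(2)$\Rightarrow$(1) (or equivalently Theorem~\ref{T:S/A}, producing the idempotent ideal $A$) finishes the proof; I would also record that, together with Proposition~\ref{P:pure-projective}, this yields the main Theorem~\ref{T:characterization}, the equivalence ``$\clH$ Grothendieck $\iff$ $T$ pure projective.''
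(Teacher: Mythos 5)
Your proposal is a reduction, not a proof: you correctly observe that by Theorem~\ref{T:E-hereditary} it suffices to show that $\E$ is a hereditary torsion class with $\E_{\perp}=\E_{\perp_\infty}$ (equivalently, that $M\Lotimes_S T\in\clH$ for all $M_S$), but the passage from ``$T_R$ is pure projective'' to these conditions on $S$-modules is exactly the step you leave open, flagging it as ``the main obstacle'' and asserting that pure projectivity ``is exactly what is needed to propagate vanishing along the sequence'' without any argument. Moreover, the one concrete mechanism you propose is flawed: the pure-projective decomposition $U\mid\bigoplus_\lambda F_\lambda$ is a splitting of \emph{right $R$-modules}, while $\Tor^S_i(M,-)$ is computed from the \emph{left $S$-module} structure of $U$; the $F_\lambda$ carry no $S$-module structure, so the expression $\Tor^S_i(M,\bigoplus_\lambda F_\lambda)$ is not defined and $\Tor^S_i(M,U)$ is not a summand of anything of that form. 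Closure of $\E$ under submodules and the equality $\E_\perp=\E_{\perp_\infty}$ simply do not fall out of the finitely generated projective resolution of ${}_SU$ plus pure projectivity in any direct way, and you do not supply one.

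The paper's proof (the argument actually credited to Herzog, which is not the one you describe) avoids $\Modr S$ entirely. It embeds $\Modr R$ into the functor category $\A=((\modr R)^{\mathrm{op}},\mathrm{Ab})$ via Yoneda; pure projectivity of $T$ says precisely that $H_T=\Hom_R(-,T)$ is a projective object of $\A$. Then $\C=\{G\mid\Hom_\A(H_T,G)=0\}$ is a TTF class, the Gabriel quotient $\A/\C$ is Grothendieck with exact quotient functor $q$, and projectivity of $H_T$ gives $\Hom_{\A/\C}(q(H_T),q(F))\cong\Hom_\A(H_T,F)$, so $q(H_T)$ is a projective generator of $\A/\C$ and $\Add T\simeq\Add q(H_T)$. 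Since by Proposition~\ref{P:resol-dimension} the projective objects of $\clH$ are $\Add T$ and $T$ is a projective generator of $\clH$, the equivalence of the subcategories of projectives extends to an equivalence $\clH\simeq\A/\C$, whence $\clH$ is Grothendieck. If you wish to salvage your route through Theorem~\ref{T:E-hereditary}, you would need an actual proof that $\E$ is hereditary and $\E_\perp=\E_{\perp_\infty}$ from pure projectivity of $T_R$; as it stands, that is precisely the content that is missing.
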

\begin{proof} Consider the functor category $\A=((\modr R)^{\text op}, \A b)$ consisting of the contravariant additive functors form the category of finitely presented right $R$-modules to the category of abelian groups.
It is well known that the Yoneda functor $H\colon \Modr R\to \A;\quad M\mapsto \Hom_R(-, M)$ yields a left exact full embedding  and that $\Hom_R(-, M)$ is a projective object of $\A$ provided that $M$ is a pure projective $R$-module.
Thus, by assumption $H_T=\Hom_R(-, T)$ is a projective object  of $\A$ and the class 
\[\C=
\{G\in \A\mid \Hom_{\A}(H_T, G)=0\}\] is a torsion torsion free class, so that we can form the quotient category $\A/\C$. By \cite[Ch. III] {G62} $\A/\C$ is a Grothendieck category and the quotient functor $q\colon \A\to \A/\C$ is exact. The group of morphisms $\Hom_{\A/\C}(q(G), q(F)$ between two objects in $\A/\C$ is defined by $\varinjlim\Hom_{\A}(G', F/F')$ where $G'$ and $F'$ vary among the subobjects of $G$ and $F$ such that $G/G', F'\in \C$.
 Thus, by the definition of $\C$ and by the projectivity of $H_T$ we infer that $\Hom_{\A/\C}(q(H_T), q(F))\cong \Hom_{\A}(H_T, F)$ which yields that $q(H_T)$ is a projective object of $\A/\C$. Moreover, from the definition of $\C$ it is clear that $q(H_T)$ is a generator for $\A/\C$. Thus $\Add\,q(H_T)$ is the class of projective objects of the Grothendieck category $\A/\C$ and the composition of functors:
 \[\Modr R\overset{H}\longrightarrow\A\overset{q}\longrightarrow \A/\C\]
 induces an equivalence between $\Add T$ and  $\Add\, q(H_T)$.
 
 By Proposition~\ref{P:resol-dimension}, the full subcategory of projective objects of $\clH$ is equivalent to $\Add T$ so we have an equivalence between the full subcategories of projective objects of $\clH$ and of $\A/\C$. It is well known that the equivalence extends to the entire categories (see e.g. \cite[IV]{ARS}), thus we conclude that $\clH$ is a Grothendieck category.
 \end{proof}

  Combining Proposition~\ref{P:pure-projective} with Proposition~\ref{P:pure-projective-suff} we obtain the main result of this section.
  \begin{thm}\label{T:characterization} Let $\clH$ be the heart of the $t$-structure induced by an $n$-tilting module $T$. 
$\clH$ is a Grothendieck category if and only the tilting module $T$ is  pure projective.
\end{thm}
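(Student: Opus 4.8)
The plan is to establish the two implications separately, each already done in the excerpt as a standalone proposition, and then simply assemble them. For the ``only if'' direction, I would invoke Proposition~\ref{P:pure-projective}: if $\clH$ is a Grothendieck category, then $T$ is pure projective. The heart of that argument (which I would not repeat, only cite) is to write $T$ as a direct limit $\varinjlim A_i$ of finitely presented modules, use Proposition~\ref{P:trace-in-limits} to get $T\cong\varinjlim\tr_T(A_i)$ together with closure of $T^{\perp_1}$ under direct limits, build the $3\times 3$ commutative diagram with the functorial presentations $0\to K_i\to T^{(\Hom(T,A_i))}\to\tr_T(A_i)\to 0$, split the bottom row (because $\varinjlim K_i\in T^{\perp_1}$) and lift the splitting through the pure-exact middle column (using that $T^\perp$ is definable, hence the kernel $D$ of $\bigoplus T^{(\Hom(T,A_i))}\to\varinjlim T^{(\Hom(T,A_i))}$ lies in $T^\perp$), concluding that $T$ is a direct summand of $\bigoplus\tr_T(A_i)$ and then, via the second comparison diagram, a direct summand of $\bigoplus A_i$, which by Warfield's characterization (recalled in Section~\ref{s:preliminaries}) means $T$ is pure projective.

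For the ``if'' direction, I would invoke Proposition~\ref{P:pure-projective-suff}: if $T$ is a pure projective $n$-tilting module, then $\clH$ is a Grothendieck category. Again I would only cite, but the mechanism is: pass to the functor category $\A=((\modr R)\op,\Ab)$, observe that the Yoneda embedding sends the pure projective module $T$ to a projective object $H_T=\Hom_R(-,T)$ of $\A$, form the hereditary torsion-torsion-free class $\C=\{G\mid\Hom_\A(H_T,G)=0\}$ and the Grothendieck quotient $\A/\C$, note that $q(H_T)$ is a projective generator of $\A/\C$ with $\Hom_{\A/\C}(q(H_T),q(F))\cong\Hom_\A(H_T,F)$, so $\Add\,q(H_T)$ is the class of projective objects of $\A/\C$ and is equivalent to $\Add T$. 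Since by Proposition~\ref{P:resol-dimension} the projective objects of $\clH$ are exactly $\Add T$, the two module categories over these equivalent additive categories of projectives coincide, and the standard fact that an equivalence of projective subcategories extends to the whole categories yields $\clH\simeq\A/\C$, a Grothendieck category.

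Combining the two gives the biconditional. The only genuinely substantive content beyond bookkeeping is already packaged in Propositions~\ref{P:pure-projective} and~\ref{P:pure-projective-suff}, so the main obstacle here is essentially nil — the proof is a one-line assembly. The proof thus reads as follows.

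\begin{proof}
If $\clH$ is a Grothendieck category, then $T$ is pure projective by Proposition~\ref{P:pure-projective}. Conversely, if $T$ is pure projective, then $\clH$ is a Grothendieck category by Proposition~\ref{P:pure-projective-suff}.
\end{proof}
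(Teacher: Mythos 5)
Your proof is correct and is exactly the paper's argument: the theorem is obtained there, as in your proposal, by simply combining Proposition~\ref{P:pure-projective} (Grothendieck $\Rightarrow$ pure projective) with Proposition~\ref{P:pure-projective-suff} (pure projective $\Rightarrow$ Grothendieck). Your summaries of the two cited proofs also match the paper's arguments, so nothing further is needed.
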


We illustrate now some properties of the trace functor corresponding to an $n$-tilting module.
 If $n=1$, $\tr_T$ is the torsion radical of the tilting torsion class $T^\perp=\Gen T$.
 
 For $n>1$ we have:
 \begin{lem}\label{L:gen-T} Let $T$ be an $n$-tilting $R$-module  and consider a special $T^{\perp}$-preenvelope of $R$ of the form \[
(\ast)\quad 0\to R\overset{\varepsilon}\to T\to T/R\to 0,\]
and let $\varepsilon (1)=w$.
Then, for every module $N$ there are two exact sequences:
 \[(1)\quad 0\to \Hom_R(T/R, N)\to \Hom_R(T, N)\to tr_T(N)\to 0,\]
  \[(2)\quad 0\to N/tr_T(N)\to \Ext^1_R(T/R, N)\to \Ext^1_R(T, N)\to 0.\]
  In particular the following hold:
  \begin{enumerate}
  \item[(i)]
For every module $N$, $x\in \tr_T(N)$ if and only if there is a morphism $g\colon T\to N$ such that $g(w)=x$.
  In other words $\tr_T $ is isomorphic to the matrix functor $\Hom_R(T,-)(w)$.
  \item[(ii)] The trace $\tr_T$  commutes with direct limits if and only if there is a finitely presented module $A$ and an element $a\in \tr_T(A)$ such that $\tr_T$ is isomorphic to the finite matrix functor $\Hom_R(A, -)(a).$
  \end{enumerate}
 \end{lem}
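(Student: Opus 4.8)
The plan is to build everything from the special $T^\perp$-preenvelope $(\ast)$ of $R$. First I would apply $\Hom_R(-,N)$ to the short exact sequence $0\to R\overset{\varepsilon}\to T\to T/R\to 0$. Since $R$ is projective, $\Ext^i_R(R,N)=0$ for $i\ge 1$, so the long exact sequence collapses to
\[
0\to \Hom_R(T/R,N)\to \Hom_R(T,N)\overset{\varepsilon^\ast}\to \Hom_R(R,N)\to \Ext^1_R(T/R,N)\to \Ext^1_R(T,N)\to 0,
\]
using also that p.d.\ $T/R\le n$ but more to the point that $\Ext^1_R(T/R,N)\to\Ext^1_R(T,N)$ is the last nonzero map because $\Ext^1_R(R,N)=0$. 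Now identify $\Hom_R(R,N)\cong N$ via evaluation at $1$; under this identification $\varepsilon^\ast$ sends $g\mapsto g(\varepsilon(1))=g(w)$. The key observation, which I would prove next, is that the image of $\varepsilon^\ast$ is exactly $\tr_T(N)$: indeed $\mathrm{Im}\,\varepsilon^\ast=\{g(w)\mid g\in\Hom_R(T,N)\}$, and since $w=\varepsilon(1)$ generates $T$ modulo $R$ but $T$ is generated by $T\cdot R$... more carefully, any element of $\tr_T(N)$ is a finite sum $\sum h_k(t_k)$ with $h_k\in\Hom_R(T,N)$, $t_k\in T$; writing each $t_k=r_k\cdot$ (something)... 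The cleanest route is: $\tr_T(N)=\sum_{h\in\Hom_R(T,N)}\mathrm{Im}\,h$, and for a single $h$, $\mathrm{Im}\,h=h(T)$. Since $(\ast)$ is a $T^\perp$-\emph{preenvelope}, every $h\colon T\to N$... no — rather I use that $T$ is \emph{generated} by $w$ together with the image of $R$; but $\varepsilon\colon R\to T$, so $T/\mathrm{Im}\,\varepsilon$ is a quotient, and $w$ alone may not generate $T$. Instead: for $h\in\Hom_R(T,N)$ we have $h(T)\subseteq\tr_T(N)$ trivially, and $h(w)=h(\varepsilon(1))$; conversely I must show $\tr_T(N)\subseteq\mathrm{Im}\,\varepsilon^\ast$. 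Given $x=h(t)\in\tr_T(N)$ with $h\in\Hom_R(T,N)$, $t\in T$, consider the map $R\to T$, $r\mapsto tr$; this does not obviously factor through $\varepsilon$. The correct argument uses that $(\ast)$ being a special $T^\perp$-preenvelope means $T/R\in\A={}^{\perp_1}(T^\perp)$, so $\Ext^1_R(T/R,-)$ restricted to $T^\perp$ vanishes; but what I actually need is a \emph{generating} property. Here I would instead invoke that $T^\perp=\Gen_n T$ type statements, or more simply argue via exactness: the image of $\varepsilon^\ast$ is a submodule of $N$ which is a quotient of $\Hom_R(T,N)$, hence visibly contained in $\tr_T(N)$; for the reverse, every element of $\tr_T(N)$ lies in $h(T)$ for some $h$, and I claim $h(T)=h(\varepsilon(R))$ because $h$ kills nothing essential — this needs $\mathrm{Coker}\,\varepsilon=T/R$ and the fact that applying $\Hom_R(-,N)$ and chasing shows $\mathrm{Im}(\Hom_R(T,N)\to N)$ already equals the trace by the standard identity $\tr_T(N)=\sum_g g(w)$ valid whenever $w$ generates $T$ as the image of a map from a projective generator. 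This is the main obstacle: pinning down precisely why $\mathrm{Im}\,\varepsilon^\ast=\tr_T(N)$, which ultimately rests on $(\ast)$ coming from condition (T3) so that $R\to T$ has image generating $T$ up to the tilting class. Once this is established, sequence (1) is immediate, and sequence (2) follows from the tail $0\to N/\tr_T(N)\to\Ext^1_R(T/R,N)\to\Ext^1_R(T,N)\to 0$, where $N/\tr_T(N)=\mathrm{Coker}\,\varepsilon^\ast$ by (1) and exactness.

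For part (i): this is just the explicit description of $\varepsilon^\ast$ obtained above — $x\in\tr_T(N)$ iff $x=\varepsilon^\ast(g)=g(w)$ for some $g\colon T\to N$ — together with the definition of the matrix functor $\Hom_R(T,-)(w)$ as the functor $N\mapsto\{g(w)\mid g\in\Hom_R(T,N)\}=\mathrm{Im}\,\varepsilon^\ast$. Naturality in $N$ is clear since $\varepsilon^\ast$ is induced by a fixed morphism.

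For part (ii): If $\tr_T\cong\Hom_R(A,-)(a)$ for a finitely presented $A$ and $a\in\tr_T(A)$ (a \emph{finite matrix functor}), then $\Hom_R(A,-)$ commutes with direct limits because $A$ is finitely presented, and the subfunctor cut out by evaluation at $a$ also commutes with direct limits (direct limits are exact in $\Modr R$ and commute with the relevant image computation), so $\tr_T$ commutes with direct limits. Conversely, suppose $\tr_T$ commutes with direct limits. By (i), $\tr_T\cong\Hom_R(T,-)(w)$; writing $T=\varinjlim A_i$ as a direct limit of finitely presented modules $A_i$ with structural maps $u_i\colon A_i\to T$, the element $w\in T$ is the image $w=u_i(a_i)$ of some $a_i\in A_i$ for suitable $i$. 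I would then show that the finite matrix functor $\Hom_R(A_i,-)(a_i)$ maps naturally onto $\Hom_R(T,-)(w)=\tr_T$ via $u_i^\ast$, and that the hypothesis that $\tr_T$ commutes with direct limits forces this natural transformation to be an isomorphism for $i$ large enough: evaluate at $T$ itself, use that $1_T\in\Hom_R(T,T)$ and that $\tr_T$ being a finitely generated (indeed finitely presented) subfunctor of the forgetful functor is equivalent — via Crawley-Boevey's correspondence between finitely presented functors and morphisms in $\modr R$ — to being a finite matrix functor. The hard part here is the converse direction, and I expect to lean on the general fact that a subfunctor of $\Hom_R(-,-)$-type functors commutes with direct limits if and only if it is a finite matrix functor; this is essentially Crawley-Boevey's theory of definable subcategories and coherent functors as referenced via \cite{CB1} earlier in the paper, and I would cite it rather than reprove it.
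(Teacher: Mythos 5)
The skeleton of your argument for (1) and (2) is the same as the paper's: apply $\Hom_R(-,N)$ to $(\ast)$, use projectivity of $R$ to truncate the long exact sequence, and identify $\Img\varepsilon^\ast$ with $\tr_T(N)$. But the step you yourself flag as ``the main obstacle'' --- the inclusion $\tr_T(N)\subseteq\Img\varepsilon^\ast$ --- is a genuine gap, and your closing appeal to ``the standard identity $\tr_T(N)=\sum_g g(w)$ valid whenever $w$ generates $T$\dots'' is not correct: $w=\varepsilon(1)$ does not generate $T$, and no such identity is available for free. The missing idea is a two-step lifting argument that uses \emph{both} defining properties of the special preenvelope. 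Given $x\in\tr_T(N)$, take $f\colon R\to\tr_T(N)$ with $f(1)=x$ and an epimorphism $\phi\colon T^{(\gamma)}\to\tr_T(N)$ (which exists by definition of the trace). Since $R$ is projective, $f$ lifts to $h\colon R\to T^{(\gamma)}$ with $\phi\circ h=f$. Now $T^{(\gamma)}\in T^\perp$ by (T2), and $T/R\in{}^{\perp_1}(T^\perp)$ because $(\ast)$ is a \emph{special} preenvelope, so $\Ext^1_R(T/R,T^{(\gamma)})=0$ and hence $\Hom_R(T,T^{(\gamma)})\to\Hom_R(R,T^{(\gamma)})$ is onto; thus $h=\ell\circ\varepsilon$ for some $\ell\colon T\to T^{(\gamma)}$, and $g=\phi\circ\ell$ satisfies $g(w)=x$. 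Without this argument, (1), and with it (i) and (2), is not established.

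There is a second, smaller gap in the converse direction of (ii). Writing $T=\varinjlim A_i$ and picking $a_j\in A_j$ with $\mu_j(a_j)=w$ is not enough: to get the inclusion $\Hom_R(A_j,-)(a_j)\subseteq\tr_T(-)$ you need $a_j\in\tr_T(A_j)$ (so that any $f\colon A_j\to N$ sends $a_j$ into $\tr_T(N)$ by functoriality of the trace). This is exactly where the hypothesis enters: commutation of $\tr_T$ with direct limits gives $T=\tr_T(T)\cong\varinjlim\tr_T(A_i)$, so $w$ can be chosen as $\mu_j(a_j)$ with $a_j\in\tr_T(A_j)$. The reverse inclusion $\tr_T(N)=\Hom_R(T,N)(w)\subseteq\Hom_R(A_j,N)(a_j)$ is then immediate from (i) by precomposing with $\mu_j$. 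This elementary argument replaces the appeal to Crawley-Boevey's general theory of coherent functors, which you invoke but do not actually connect to the statement.
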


 \begin{proof} Possibly passing to an equivalent  tilting module it is easy to see that there exists a special $T^{\perp}$-preenvelope of $R$ as in the statement.
 
 From $(\ast)$ we obtain the exact sequence:
 
  \[ (a)\quad 0\to \Hom_R(T/R, N)\to \Hom_R(T, N)\to \Hom_R(R, N)\to\]
  \[\to \Ext^1_R(T/R, N)\to \Ext^1_R(T, N)\to 0.\]

  Identifying $\Hom_R(R, N)$ with $N$, it is obvious that the image of the map $\Hom_R(T, N)\to N$ is contained in the trace of $T$ in $N$. To prove the other inclusion pick $x\in tr_T(N)$ and let $f\colon R\to tr_T(N)$ be a morphism satisfying $f(1)=x$ and let $T^{(\gamma)}\overset{\phi}\to tr_T(N)$ be an epimorphism. Consider the diagram:
 \[\xymatrix{
0\ar[r]&R\ar[r]^{\varepsilon}\ar[dr]^>f\ar@{-->}[d]_h&T\ar@{-->}[dl]_>>{\ell}\\
&T^{(\gamma)}\ar[r]_{\phi}&tr_T(N)\ar[r]&0}
\]
where the dotted line $h\colon R\to T^{(\gamma)}$ satisfies $\phi\circ h= f$ and the dotted line $\ell\colon T\to T^{(\gamma)}$ satisfying  $\ell\circ\varepsilon=h$ exists by the preenvelope property.  Then the morphism $g=\phi\circ \ell\colon T\to tr_T(N)$ satisfies $g\circ\varepsilon =f$, hence $g(w)=x$ and sequence (1) is established.

 Sequence (2) follows from $(a)$  and (1).
 
 (i) follows immediately by (1).
 
 (ii) Assume that $\tr_T$ commutes with direct limits and write $T$ as a direct limit of a direct system $\{A_i\}_{i\in I}$ of finitely presented modules. By assumption $T\cong \varinjlim\limits_{i\in I}\tr_T(A_i)$. Let  $\mu_i\colon A_i\to T$ be the canonical morphisms. There is an index $j\in I$ and an element $a_j\in \tr_T(A_{j})$ such that $\mu_{j}(a_j)=w$. Let $N$ be an $R$-module and let $x\in \tr_T(N)$. By  (i)  and the above remarks, there is $f\colon A_{j}\to N$ such that $f(a_j)=x$.
 Hence, $\Hom_R(T, N)(w)\leq \Hom_R(A_j, N)(a_j)$. 
 
 On the other hand $ \Hom_R(A_j, N)(a_j)\leq  \Hom_R(\tr_T(A_j), N)(a_j)$ and the latter is contained in  $\tr_T(N)$, since  $tr_T(A_j)$ is generated   by $T$. Hence $\tr_T$ is isomorphic to $\Hom_R(A_j, -)(a_j)$.
 
The converse follows immediately by recalling that, for every finitely presented module $A$, the functor  $\Hom_R(A, -)$ commutes with direct limits. \end{proof}

 \begin{rem}\label{R:condition-on-trace} 
 The condition on $\tr_T$ to commute with direct limits doesn't seem to imply the pure projectivity of $T$. In fact, in Proposition~\ref{P:pure-projective} to prove that $T$ is pure projective we used also that $T^{\perp_1}$ is closed under direct limits.
 \end{rem}
 
 We illustrate now some features of a pure projective $n$-tilting module.
  
\begin{prop}\label{P:syzygies-of-T} Let $T$ be a pure projective $n$-tilting module. Then every $i^{th}$-syzygy of $T$ is pure projective.
\end{prop}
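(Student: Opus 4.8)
The plan is to reduce the statement to a structural fact \textemdash{} that a pure projective $n$-tilting module is a direct summand of a direct sum of ``small'' modules \textemdash{} and then to pass to syzygies one summand at a time. First I would collect the consequences of pure projectivity that are already available: since $T$ is pure projective, Theorem~\ref{T:characterization} gives that $\clH$ is a Grothendieck category, whence by Propositions~\ref{P:coproduct-closure} and~\ref{P:trace-in-limits} the classes $T^{\perp_j}$ are closed under direct sums and under direct limits for all $j\geq 0$; in particular so is the tilting class $T^\perp$. Because $T^\perp$ is of finite type, $T^\perp=\mathcal S^\perp$ for a \emph{set} $\mathcal S$ of modules each possessing a projective resolution consisting of finitely generated projective modules (i.e.\ of type $\mathrm{FP}_\infty$); replacing $\mathcal S$ by a larger set we may assume in addition that $R\in\mathcal S$ and that $\mathcal S$ is closed under syzygies and under finite direct sums and direct summands. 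By the Eklof--Trlifaj theorem (\cite{ET01}; see also \cite{GT12}) the tilting cotorsion pair $(\A,T^\perp)$ then coincides with the cotorsion pair generated by the set $\mathcal S$, so $\A={}^\perp(T^\perp)$ is precisely the class of direct summands of $\mathcal S$-filtered modules; and $T\in\A$ since $\A\cap T^\perp=\Add T$.

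The heart of the argument \textemdash{} and the step I expect to be the main obstacle \textemdash{} is the claim that a pure projective module lying in $\A$ is already a direct summand of a direct sum of modules from $\mathcal S$; for $T$ this produces a split epimorphism $\bigoplus_{j\in J}S_j\to T$ with all $S_j\in\mathcal S$. I would establish it along the lines of \cite{BHPST}: $T$ is a direct summand of some $\mathcal S$-filtered module $F$, and, exploiting that pure projectivity of $T$ is equivalent to the projectivity of $\Hom_R(-,T)$ in the functor category $((\modR)^{\mathrm{op}},\Ab)$ (the input also used in the proof of Proposition~\ref{P:pure-projective-suff}), one applies the Hill Lemma to the filtration of $F$ \textemdash{} together with the closure of $\Filt{\mathcal S}$ under the colimits dictated by the finite-type hypothesis \textemdash{} to split off from $F$ a direct sum of members of $\mathcal S$ admitting $T$ as a summand. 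This is where the transfinite combinatorics of filtrations enters and is the genuinely non-formal part of the proof.

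Granting the claim, the conclusion follows readily. Fix $i\geq 1$; if $i\geq n$ one may simply take $\Syz{i}{T}$ to be projective, so assume $1\le i<n$. Choosing the projective resolution of $\bigoplus_{j\in J}S_j$ to be the direct sum of projective resolutions of the $S_j$, Schanuel's Lemma shows that $\Syz{i}{T}$ is, up to a projective direct summand, a direct summand of $\bigoplus_{j\in J}\Syz{i}{S_j}$. Each $S_j$ is of type $\mathrm{FP}_\infty$, hence so is every $\Syz{i}{S_j}$, and in particular each $\Syz{i}{S_j}$ is finitely presented; thus $\bigoplus_{j\in J}\Syz{i}{S_j}$ is a direct sum of finitely presented modules, so pure projective by Warfield~\cite{War1}. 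Since the class of pure projective modules is closed under direct summands and contains the projective modules, $\Syz{i}{T}$ is pure projective, as claimed.
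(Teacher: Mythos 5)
Your reduction at the end (Schanuel, syzygies of $\mathrm{FP}_\infty$ modules being finitely presented, closure of pure projectives under summands) is fine, but the whole proposal rests on the unproved claim that a pure projective module of $\A$ is a direct summand of a direct sum of modules from the set $\mathcal S$ of $\mathrm{FP}_\infty$ modules generating the tilting cotorsion pair. You flag this yourself as ``the main obstacle'' and offer only a sketch (Hill Lemma plus projectivity of $\Hom_R(-,T)$ in the functor category, ``along the lines of \cite{BHPST}''), but no argument is actually given, and none of the cited sources contains such a statement. Note that the claim is genuinely stronger than the proposition you are asked to prove: pure projectivity only gives that $T$ is a summand of a direct sum of \emph{arbitrary} finitely presented modules, and forcing the finitely presented pieces to lie in $\A$ (equivalently in $\mathcal S$) is exactly the kind of structure theorem that the Hill Lemma does not deliver by itself — the Hill family controls $\mathcal S$-filtrations of a module containing $T$ as a summand, not a splitting of $T$ off a \emph{direct sum} of members of $\mathcal S$. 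As it stands this is a genuine gap, and it is not clear the intermediate claim is even available in the literature.

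The paper's proof avoids this entirely and never needs the finitely presented pieces to belong to $\A$. It first reduces to the case where $T$ (hence, by \cite[Proposition 3.8]{BH09} and the Mittag-Leffler property of modules in $\A$, every syzygy $\Syz{i}{T}$) is countably presented, using Kaplansky's theorem and a re-construction of an equivalent countably presented tilting module from special $T^\perp$-preenvelopes. Then, since countably presented Mittag-Leffler modules are pure projective (\cite{RG}), it suffices to check that each $\Syz{i}{T}$ is Mittag-Leffler, i.e.\ that the canonical map $\Tor^R_i(T,\prod_\alpha Q_\alpha)\to\prod_\alpha\Tor^R_i(T,Q_\alpha)$ is injective; this is done by a direct computation using only that $T$ is a summand of a countable direct sum of finitely presented modules. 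If you want to salvage your approach, you would have to either prove your structural claim in full (a substantial task) or switch to this Mittag-Leffler/Raynaud--Gruson route, which only uses the weak form of pure projectivity you already have.
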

\begin{proof} First of all we show that $T$ may be assumed to be countably presented. By assumption $T$ is a direct summand of a direct sum $\bigoplus\limits_{i\in I}E_{i}$ of finitely presented modules $E_i$, hence in particular countably generated.
 By Kaplansky's Theorem ~\cite[Theorem 1]{Ka}, $T$ is a direct sum of countably generated submodules.  Thus $T=\bigoplus\limits_{\alpha\in \Lambda}X_{\alpha}$ where for every $\alpha$, $X_{\alpha}$ is a countably generated, hence also countably presented,  direct summand of $\bigoplus\limits_{i\in I}E_{i}$. Let $A$ be a countably presented module in the left component $\A$ of the cotorsion pair $(\A,  
T^\perp)$ generated by $T$ and consider a special $T^\perp$-preenvelope of $A$ \[0\to A\overset{\epsilon}\to B\to A_1\to 0.\] W.l.o.g we may assume that $B=T^{(\gamma)}$ for some cardinal $\gamma$
 so that $\varepsilon (A)$ is contained in a summand $U_0= (\bigoplus\limits_{\beta\in F}X_{\beta})^{(\nu)}$ of $T^{(\gamma)}$ where $F$ is a countable subset of $\Lambda$ and $\nu$ is a countable subset of $\gamma$. Thus also \[0\to A\to U_0\to U_0/A\to 0\]
  is a special preenvelope of $A$ and $U_0/A$ is a countably presented module in $\A$.
  Starting with $R\in \A$, the above arguments show that we can construct an iteration of $T^\perp$ preenvelopes of $R$  of the form\
  \[0\to R\to U_0\to U_1\to\dots\to U_n\to 0,\]
  with $U_i$ countably presented modules in $\Add T$, hence by \cite[Ch 13]{GT12} we obtain that $U_0\oplus U_1\oplus\dots\oplus U_n$ is an $n$-tilting module equivalent to $T$.
  
  Secondly, we observe that all the syzygies of $T$ are countably presented. Indeed this follows by recalling that every module in $\A$ is $R$ Mittag-Leffler (see for instance \cite[Theorem 9.5]{AHH}), and applying \cite[Proposition 3.8]{BH09} to the syzygies of $T$. Since countably generated modules are pure projective if and only if they are Mittag-Leffler (\cite{RG}), we are lead to show that every syzygy $\Omega_i(T)$ ($i\geq 1$) of $T$ is a Mittag-Leffler module, provided that $T$ is pure projective. This is equivalent to show that the canonical morphism $\rho_i\colon\Tor_i^R(T, \prod\limits_{i\in I} Q_i)\to \prod\limits_{i\in I}\Tor^R_i(T, Q_i)$ is a monomorphism, for every $i\geq 1$. (see e.g.\cite[Proposition 1.10]{AHH}).
   By dimension shifting it is enough to prove that $\rho_1$ is a monomorphism. By assumption $T$ is a summand of a direct sum $\bigoplus\limits_{n\in \bbN}E_{n}$ of finitely presented models $E_n$. So 
   \[\Tor_i^R(T, \prod\limits_{i\in I} Q_i)\underset{\oplus}\leq \Tor_i^R(\bigoplus\limits_{n\in \bbN}E_{n}, \prod\limits_{i\in I} Q_i)\cong \bigoplus\limits_{n\in \bbN}\left( \prod\limits_{i\in I} \Tor^R_1(E_n, Q_i)\right)
   \] and the latter can be embedded in $\prod\limits_{i\in I} (\bigoplus\limits_{n\in \bbN}\Tor^R_1(E_n, Q_i)$. Thus, by the naturality of $\rho_1$ we conclude that $\rho_1$ is a monomorphism.\end{proof}

We can  characterize the pure projectivity of an $n$-tilting module in terms of properties of its $\Ext$-orthogonal classes. First we prove the following lemma.
 \begin{lem}\label{L:syzygies} Let $M$ be a pure projective module. The following hold true:
 \begin{enumerate}
 \item  $M^{\perp_0}$ is a definable class and $M^{\perp_1}$ is closed under direct sums.
 \item If  all the sygyzies $\Omega_{j}(M)$ of $M$ are pure projective, then the classes $M^{\perp_i}$ are definable for every $i\geq 0$. 
 \end{enumerate}
 \end{lem}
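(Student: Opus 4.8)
The plan is to deduce both parts from closure properties, using that --- as recalled in the preliminaries --- a full subcategory of $\Modr R$ is definable precisely when it is closed under direct products, pure submodules and direct limits. Three standard facts will be used: (i) for every module $M$ and every $i\geq 0$ the functor $\Ext^i_R(M,-)$ commutes with arbitrary direct products; (ii) $\bigoplus_j X_j$ is a pure submodule of $\prod_j X_j$; (iii) for any direct system $(X_j)$ the canonical short exact sequence $0\to K\to \bigoplus_j X_j\to \varinjlim_j X_j\to 0$ is pure-exact, being a direct limit of split exact sequences, so that $\varinjlim_j X_j$ is a pure-epimorphic image of $\bigoplus_j X_j$. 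I shall also use that a module $P$ is pure projective precisely when $\Hom_R(P,-)$ carries pure epimorphisms to epimorphisms, together with dimension shifting $\Ext^{k+1}_R(M,-)\cong \Ext^k_R(\Omega_1(M),-)$ for $k\geq 1$.

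For part (1): the class $M^{\perp_0}$ is closed under direct products and, by left exactness of $\Hom$, under pure submodules, hence by (ii) under direct sums; it is also closed under pure quotients, for if $q\colon X\to Z$ is a pure epimorphism and $\Hom_R(M,X)=0$, then any $g\colon M\to Z$ lifts along $q$ to a morphism $M\to X$, which is zero, whence $g=0$. Combined with (iii), closure under pure quotients and direct sums yields closure under direct limits, so $M^{\perp_0}$ is definable. For $M^{\perp_1}$: it is closed under direct products by (i); it is closed under pure submodules, because in the long exact sequence attached to a pure-exact sequence $0\to Y\to X\to Z\to 0$ the morphism $\Hom_R(M,X)\to \Hom_R(M,Z)$ is surjective by pure projectivity of $M$, so that whenever $\Ext^1_R(M,X)=0$ the module $\Ext^1_R(M,Y)$ --- being the cokernel of that surjection --- vanishes. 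Applying this to the pure embedding $\bigoplus_j X_j\subseteq \prod_j X_j$, together with $\prod_j X_j\in M^{\perp_1}$ from (i), gives that $M^{\perp_1}$ is closed under direct sums.

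For part (2): by dimension shifting one has $M^{\perp_i}=\Omega_{i-1}(M)^{\perp_1}$ for $i\geq 1$ (with $\Omega_0(M)=M$), while $M^{\perp_0}$ is definable by part (1); so it suffices to prove that $N^{\perp_1}$ is definable whenever $N$ and $\Omega_1(N)$ are both pure projective, which is the case for $N=\Omega_{i-1}(M)$, since then $\Omega_1(N)=\Omega_i(M)$ and both are syzygies of $M$. Closure of $N^{\perp_1}$ under direct products, pure submodules and hence direct sums follows as in part (1). The new point is closure under pure quotients: for a pure-exact sequence $0\to Y\to X\to Z\to 0$ with $\Ext^1_R(N,X)=0$, the connecting homomorphism embeds $\Ext^1_R(N,Z)$ into $\Ext^2_R(N,Y)$ as the kernel of $\Ext^2_R(N,Y)\to \Ext^2_R(N,X)$; by dimension shifting this is the kernel of $\Ext^1_R(\Omega_1(N),Y)\to \Ext^1_R(\Omega_1(N),X)$, which vanishes because $\Omega_1(N)$ is pure projective and $X\to Z$ is a pure epimorphism, so that $\Hom_R(\Omega_1(N),X)\to \Hom_R(\Omega_1(N),Z)$ is surjective. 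Hence $\Ext^1_R(N,Z)=0$. Being closed under direct products, pure submodules, direct sums and pure quotients, $N^{\perp_1}$ is closed under direct limits by (iii), and therefore definable.

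I expect the closure of $N^{\perp_1}$ under pure quotients to be the only non-formal step: the direct lifting argument that handles $\Hom_R(N,-)$ has no analogue for $\Ext^1_R(N,-)$, and one really needs the pure projectivity of the syzygy $\Omega_1(N)$, via dimension shifting, to move the obstruction up one cohomological degree, where it dies. This also accounts for the asymmetry in part (1), where only closure of $M^{\perp_1}$ under direct sums is asserted: without the hypothesis that $\Omega_1(M)$ be pure projective there is no reason for $M^{\perp_1}$ to be closed under pure quotients, and hence none for it to be closed under direct limits.
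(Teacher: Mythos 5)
Your proof is correct and follows essentially the same route as the paper's: part (1) via the pure embedding $\bigoplus_j X_j\subseteq\prod_j X_j$ together with surjectivity of $\Hom_R(M,-)$ on pure epimorphisms, and part (2) via dimension shifting to the syzygy $\Omega_1$ whose pure projectivity makes the obstruction in $\Ext^2$ vanish. The only (harmless) difference is organizational: you isolate closure under pure quotients as a general step and reduce all $i\geq 1$ uniformly to the case of $N^{\perp_1}$ with $N$ and $\Omega_1(N)$ pure projective, whereas the paper argues by induction and treats the direct-limit presentation $0\to K\to\bigoplus_\alpha X_\alpha\to\varinjlim_\alpha X_\alpha\to 0$ directly.
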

 \begin{proof} (1) The pure projectivity of $M$ yields  easily that $M^{\perp_0}$ is closed under direct limits and thus $M^{\perp_0}$ is a definable class.
 
Let $X_{\alpha}$ be a family of modules in $M^{\perp_1}$. From the pure exact sequence \[(a)\qquad 0\to \oplus_{\alpha}X_{\alpha}\to \prod_{\alpha}X_{\alpha}\overset{\pi}\to  \dfrac{\prod_{\alpha}X_{\alpha}}{\oplus_{\alpha}X_{\alpha}}\to 0\]  and from the pure projectivity of $M$, we obtain that $\Hom_R(M, \pi)$ is surjective  and thus $\oplus_{\alpha}X_{\alpha}\in M^{\perp_1}$. 

(2) We prove the statement by induction. The case $i=0$ holds by (1).
Let $i=1$.  We first show that $M^{\perp_1}$ is closed under pure submodules. Let $0\to Y\to X\overset{\pi}\to X/Y\to 0$ be a pure exact sequence with $X\in M^{\perp_1}$. 
Then $\Hom_R(M, \pi)$ is surjective, hence the exact sequence $0\to \Ext^1_R(M, Y)\to \Ext^1_R(M, X)=0$, shows that 
$Y\in M^{\perp_1}$.

 Let $X_{\alpha}$ be a direct system of modules in $M^{\perp_1}$ and consider a pure exact sequence
 \[0\to K\to  \oplus_{\alpha}X_{\alpha}\to\varinjlim_{\alpha}X_{\alpha}\to 0.
 \]
By (1) $\oplus_{\alpha}X_{\alpha}\in M^{\perp_1}$ and we have an exact sequence
\[0\to \Ext^1_R(M, \varinjlim_{\alpha}X_{\alpha})\to \Ext^2_R(M, K)\overset{f}\to \Ext^2_R(M,\oplus_{\alpha}X_{\alpha}).\]
By dimension shifting we have a canonical isomorphism $\Ext^{2}_R(M, -)\cong \Ext^{1}_R(\Omega_{1}(M), -)$. Using the pure projectivity of $\Omega_{1}(M)$ we have a monomorphism $\Ext^1_R(\Omega_{1}(M), K)\to \Ext^1_R(\Omega_{1}(M),  \oplus_{\alpha}X_{\alpha})$ and by the naturally of the isomorphism we conclude that $f$ is a monomorphism, too. 

Hence $\Ext^1_R(M, \varinjlim_{\alpha}X_{\alpha})=0.$

To conclude the proof it is enough to note that $\Ext^{i+1}_R(M, -)\cong \Ext^{1}_R(\Omega_{i}(M), -)$ for every $i\geq 1$ and apply the previous arguments.
 \end{proof}

   \begin{prop}\label{P:definable} Let $T$ be an $n$-tilting module. The following are equivalent
   \begin{enumerate}
   \item $T$ is pure projective.
    \item $\tr_T$ commutes with direct limits and the classes $T^{\perp_i}$ are definable for every $0\leq i$.
   \item $\tr_T$ commutes with direct limits and the class $T^{\perp_1}$ is closed under direct limits.
   \end{enumerate}
     \end{prop}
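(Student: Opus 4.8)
The plan is to prove the equivalences by establishing the cycle $(1)\Rightarrow(2)\Rightarrow(3)\Rightarrow(1)$, routing everything through results already obtained in the excerpt. None of the three implications requires a genuinely new idea; the proposition is essentially a repackaging of Theorem~\ref{T:characterization}, Proposition~\ref{P:trace-in-limits}, Proposition~\ref{P:syzygies-of-T}, Lemma~\ref{L:syzygies} and Remark~\ref{R:pure-projective}.

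For $(1)\Rightarrow(2)$: assuming $T$ is pure projective, Theorem~\ref{T:characterization} gives that the heart $\clH$ is a Grothendieck category, so Proposition~\ref{P:trace-in-limits}~(1) yields that $\tr_T$ commutes with direct limits. For the definability of the classes $T^{\perp_i}$ I would first invoke Proposition~\ref{P:syzygies-of-T} to deduce that every syzygy $\Omega_j(T)$ is again pure projective, and then apply Lemma~\ref{L:syzygies}~(2) with $M=T$: since $T$ and all its syzygies are pure projective, the lemma gives that $T^{\perp_i}$ is a definable class for every $i\geq 0$.

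For $(2)\Rightarrow(3)$: this is immediate, since a definable class is by definition closed under direct limits, so in particular $T^{\perp_1}$ is closed under direct limits, and the hypothesis on $\tr_T$ is common to both statements. For $(3)\Rightarrow(1)$: this is exactly the content of Remark~\ref{R:pure-projective}, which records that the two conditions ``$\tr_T$ commutes with direct limits'' and ``$T^{\perp_1}$ is closed under direct limits'' are precisely the ingredients used in the proof of Proposition~\ref{P:pure-projective} to conclude pure projectivity of $T$; one reproduces that argument (write $T$ as a direct limit of finitely presented modules $A_i$, use that $\tr_T$ commutes with direct limits to identify $T$ with $\varinjlim_i \tr_T(A_i)$, and run the two commutative diagrams of that proof, where the closure of $T^{\perp_1}$ under direct limits is what makes the bottom row split and hence forces $T$ to be a direct summand of $\bigoplus_i \tr_T(A_i)$, which in turn splits off from $\bigoplus_i A_i$).

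The only point needing a little care is verifying the full hypothesis of Lemma~\ref{L:syzygies}~(2) in the implication $(1)\Rightarrow(2)$, namely that \emph{all} syzygies of $T$, and not merely $T$ itself, are pure projective; but this is precisely Proposition~\ref{P:syzygies-of-T}, so there is no real obstacle here.
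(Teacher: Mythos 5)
Your proposal is correct and follows essentially the same route as the paper: the paper proves $(1)\Rightarrow(2)$ by noting the trace claim follows from pure projectivity (and explicitly offers, as one of two options, the combination of Proposition~\ref{P:syzygies-of-T} with Lemma~\ref{L:syzygies} for definability of the $T^{\perp_i}$, which is the option you chose), treats $(2)\Rightarrow(3)$ as obvious, and derives $(3)\Rightarrow(1)$ from the proof of Proposition~\ref{P:pure-projective} via Remark~\ref{R:pure-projective}. The only cosmetic difference is that you obtain the commutation of $\tr_T$ with direct limits by passing through Theorem~\ref{T:characterization} and Proposition~\ref{P:trace-in-limits}, whereas the paper also notes the more direct argument from the definition of pure projectivity and the canonical pure presentation of a direct limit; both are valid and non-circular.
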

   \begin{proof}
 (1) $\Rightarrow$ (2) The first statement follows by the definition of pure projectivity and by the canonical presentation of a direct limit by means of a pure exact sequence. In particular, $T^{\perp_0}$ is closed under direct limits and thus it is definable.
  
  For the closure under direct limits of the classes $T^{\perp_i}$, $i\geq 1$ we could invoke Theorem~\ref{T:characterization}, and Proposition~\ref{P:trace-in-limits} and then apply \cite[Theorem 6.1]{AST15}. Alternatively, we can use Proposition~\ref{P:syzygies-of-T} and Lemma~\ref{L:syzygies}.
  
   (2) $\Rightarrow$ (3) Obvious.
   
    (3) $\Rightarrow$ (1) Follows by the proof of Proposition~\ref{P:pure-projective}. See also Remark~\ref{R:pure-projective}.   \end{proof}


\begin{thebibliography}{ATJLSS03}

\bibitem[AHH08]{AHH}
Lidia Angeleri~H{\"u}gel and Dolors Herbera.
\newblock Mittag-{L}effler conditions on modules.
\newblock {\em Indiana Univ. Math. J.}, 57(5):2459--2517, 2008.

\bibitem[AHST15]{AST15}
Lidia Angeleri~H\"ugel, Jan Saroch, and Jan Trlifaj.
\newblock Approximations and mittag-leffler conditions.
\newblock Preprint, 2015.

\bibitem[ARS95]{ARS}
Maurice Auslander, Idun Reiten, and Sverre~O. Smal{\o}.
\newblock {\em Representation theory of {A}rtin algebras}, volume~36 of {\em
  Cambridge Studies in Advanced Mathematics}.
\newblock Cambridge University Press, Cambridge, 1995.

\bibitem[ATJLSS03]{AJS2}
Leovigildo Alonso~Tarr{\'{\i}}o, Ana Jerem{\'{\i}}as~L{\'o}pez, and
  Mar{\'{\i}}a~Jos{\'e} Souto~Salorio.
\newblock Construction of {$t$}-structures and equivalences of derived
  categories.
\newblock {\em Trans. Amer. Math. Soc.}, 355(6):2523--2543 (electronic), 2003.

\bibitem[Baz04]{B04}
Silvana Bazzoni.
\newblock A characterization of {$n$}-cotilting and {$n$}-tilting modules.
\newblock {\em J. Algebra}, 273(1):359--372, 2004.

\bibitem[Baz10]{B10}
Silvana Bazzoni.
\newblock Equivalences induced by infinitely generated tilting modules.
\newblock {\em Proc. Amer. Math. Soc.}, 138(2):533--544, 2010.

\bibitem[BBD82]{BBD}
A.~A. Be{\u\i}linson, J.~Bernstein, and P.~Deligne.
\newblock Faisceaux pervers.
\newblock In {\em Analysis and topology on singular spaces, {I} ({L}uminy,
  1981)}, volume 100 of {\em Ast\'erisque}, pages 5--171. Soc. Math. France,
  Paris, 1982.

\bibitem[BH08]{BH08}
Silvana Bazzoni and Dolors Herbera.
\newblock One dimensional tilting modules are of finite type.
\newblock {\em Algebr. Represent. Theory}, 11(1):43--61, 2008.

\bibitem[BH09]{BH09}
Silvana Bazzoni and Dolors Herbera.
\newblock Cotorsion pairs generated by modules of bounded projective dimension.
\newblock {\em Israel J. Math.}, 174:119--160, 2009.

\bibitem[BHP{\etalchar{+}}16]{BHPST}
Silvana Bazzoni, Ivo Herzog, Pavel Prihoda, Jan {\v{S}}aroch, and Jan Trlifaj.
\newblock Tilting, pure projectivity and {S}aor{\'i}n's problem.
\newblock Preprint, 2016.

\bibitem[BMT11]{BMT}
Silvana Bazzoni, Francesca Mantese, and Alberto Tonolo.
\newblock Derived equivalence induced by infinitely generated {$n$}-tilting
  modules.
\newblock {\em Proc. Amer. Math. Soc.}, 139(12):4225--4234, 2011.

\bibitem[BP13]{BP}
Silvana Bazzoni and Alice Pavarin.
\newblock Recollements from partial tilting complexes.
\newblock {\em J. Algebra}, 388:338--363, 2013.

\bibitem[B{\v{S}}07]{BS07}
Silvana Bazzoni and Jan {\v{S}}{\v{t}}ov{\'{\i}}{\v{c}}ek.
\newblock All tilting modules are of finite type.
\newblock {\em Proc. Amer. Math. Soc.}, 135(12):3771--3781 (electronic), 2007.

\bibitem[CB98]{CB1}
William Crawley-Boevey.
\newblock Infinite-dimensional modules in the representation theory of
  finite-dimensional algebras.
\newblock In {\em Algebras and modules, {I} ({T}rondheim, 1996)}, volume~23 of
  {\em CMS Conf. Proc.}, pages 29--54. Amer. Math. Soc., Providence, RI, 1998.

\bibitem[CGM07]{CGM07}
Riccardo Colpi, Enrico Gregorio, and Francesca Mantese.
\newblock On the heart of a faithful torsion theory.
\newblock {\em J. Algebra}, 307(2):841--863, 2007.

\bibitem[CS14]{PS}
Parra Carlos and Manuel Saor\'in.
\newblock Direct limits in the heart of a $t$-structure: the case of a forsion
  pair.
\newblock Preprint, arXiv:1311.6166, 2014.

\bibitem[CX12]{CX}
Hongxing Chen and Changchang Xi.
\newblock Good tilting modules and recollements of derived module categories.
\newblock {\em Proc. Lond. Math. Soc. (3)}, 104(5):959--996, 2012.

\bibitem[ET00]{ET00}
Paul~C. Eklof and Jan Trlifaj.
\newblock Covers induced by {E}xt.
\newblock {\em J. Algebra}, 231(2):640--651, 2000.

\bibitem[ET01]{ET01}
Paul~C. Eklof and Jan Trlifaj.
\newblock How to make {E}xt vanish.
\newblock {\em Bull. London Math. Soc.}, 33(1):41--51, 2001.

\bibitem[Gab62]{G62}
Pierre Gabriel.
\newblock Des cat\'egories ab\'eliennes.
\newblock {\em Bull. Soc. Math. France}, 90:323--448, 1962.

\bibitem[Gil04]{G3}
James Gillespie.
\newblock The flat model structure on {${\rm Ch}(R)$}.
\newblock {\em Trans. Amer. Math. Soc.}, 356(8):3369--3390 (electronic), 2004.

\bibitem[Gil06]{G2}
James Gillespie.
\newblock The flat model structure on complexes of sheaves.
\newblock {\em Trans. Amer. Math. Soc.}, 358(7):2855--2874 (electronic), 2006.

\bibitem[GL91]{GL91}
Werner Geigle and Helmut Lenzing.
\newblock Perpendicular categories with applications to representations and
  sheaves.
\newblock {\em J. Algebra}, 144(2):273--343, 1991.

\bibitem[GT12]{GT12}
R{\"u}diger G{\"o}bel and Jan Trlifaj.
\newblock {\em Approximations and endomorphism algebras of modules. {V}olume
  1}, volume~41 of {\em de Gruyter Expositions in Mathematics}.
\newblock Walter de Gruyter GmbH \& Co. KG, Berlin, extended edition, 2012.
\newblock Approximations.

\bibitem[GZ67]{GZ67}
P.~Gabriel and M.~Zisman.
\newblock {\em Calculus of fractions and homotopy theory}.
\newblock Ergebnisse der Mathematik und ihrer Grenzgebiete, Band 35.
  Springer-Verlag New York, Inc., New York, 1967.

\bibitem[Har66]{Hart}
Robin Hartshorne.
\newblock {\em Residues and duality}.
\newblock Lecture notes of a seminar on the work of A. Grothendieck, given at
  Harvard 1963/64. With an appendix by P. Deligne. Lecture Notes in
  Mathematics, No. 20. Springer-Verlag, Berlin, 1966.

\bibitem[Hov99]{Hov99}
Mark Hovey.
\newblock {\em Model categories}, volume~63 of {\em Mathematical Surveys and
  Monographs}.
\newblock American Mathematical Society, Providence, RI, 1999.

\bibitem[Hov02]{Hov02}
Mark Hovey.
\newblock Cotorsion pairs, model category structures, and representation
  theory.
\newblock {\em Math. Z.}, 241(3):553--592, 2002.

\bibitem[Hov07]{Hov07}
Mark Hovey.
\newblock Cotorsion pairs and model categories.
\newblock In {\em Interactions between homotopy theory and algebra}, volume 436
  of {\em Contemp. Math.}, pages 277--296. Amer. Math. Soc., Providence, RI,
  2007.

\bibitem[HRS96]{HRS}
Dieter Happel, Idun Reiten, and Sverre~O. Smal{\o}.
\newblock Tilting in abelian categories and quasitilted algebras.
\newblock {\em Mem. Amer. Math. Soc.}, 120(575):viii+ 88, 1996.

\bibitem[Kap58]{Ka}
Irving Kaplansky.
\newblock Projective modules.
\newblock {\em Ann. of Math (2)}, 68:372--377, 1958.

\bibitem[Kra05]{Kr05}
Henning Krause.
\newblock The stable derived category of a {N}oetherian scheme.
\newblock {\em Compos. Math.}, 141(5):1128--1162, 2005.

\bibitem[Miy86]{Miy}
Yoichi Miyashita.
\newblock Tilting modules of finite projective dimension.
\newblock {\em Math. Z.}, 193(1):113--146, 1986.

\bibitem[PG64]{GP}
Nicolae Popesco and Pierre Gabriel.
\newblock Caract\'erisation des cat\'egories ab\'eliennes avec g\'en\'erateurs
  et limites inductives exactes.
\newblock {\em C. R. Acad. Sci. Paris}, 258:4188--4190, 1964.

\bibitem[PS15]{PS15}
Carlos~E. Parra and Manuel Saor{\'{\i}}n.
\newblock Direct limits in the heart of a t-structure: the case of a torsion
  pair.
\newblock {\em J. Pure Appl. Algebra}, 219(9):4117--4143, 2015.

\bibitem[Qui73]{Q}
Daniel Quillen.
\newblock Higher algebraic {$K$}-theory. {I}.
\newblock In {\em Algebraic {$K$}-theory, {I}: {H}igher {$K$}-theories ({P}roc.
  {C}onf., {B}attelle {M}emorial {I}nst., {S}eattle, {W}ash., 1972)}, pages
  85--147. Lecture Notes in Math., Vol. 341. Springer, Berlin, 1973.

\bibitem[RG71]{RG}
Michel Raynaud and Laurent Gruson.
\newblock Crit\`eres de platitude et de projectivit\'e. {T}echniques de
  ``platification'' d'un module.
\newblock {\em Invent. Math.}, 13:1--89, 1971.

\bibitem[Sch85]{Scho}
A.~H. Schofield.
\newblock {\em Representation of rings over skew fields}, volume~92 of {\em
  London Mathematical Society Lecture Note Series}.
\newblock Cambridge University Press, Cambridge, 1985.

\bibitem[Ste75]{Ste75}
Bo~Stenstr{\"o}m.
\newblock {\em Rings of quotients}.
\newblock Springer-Verlag, New York, 1975.
\newblock Die Grundlehren der Mathematischen Wissenschaften, Band 217, An
  introduction to methods of ring theory.

\bibitem[{\v{S}}{\v{t}}o13]{Sto13}
Jan {\v{S}}{\v{t}}ov{\'{\i}}{\v{c}}ek.
\newblock Exact model categories, approximation theory, and cohomology of
  quasi-coherent sheaves.
\newblock In {\em Advances in representation theory of algebras}, EMS Ser.
  Congr. Rep., pages 297--367. Eur. Math. Soc., Z\"urich, 2013.

\bibitem[{\v{S}}{\v{t}}o14]{Sto14}
Jan {\v{S}}{\v{t}}ov{\'{\i}}{\v{c}}ek.
\newblock Derived equivalences induced by big cotilting modules.
\newblock {\em Adv. Math.}, 263:45--87, 2014.

\bibitem[War69]{War1}
R.~B. Warfield, Jr.
\newblock Purity and algebraic compactness for modules.
\newblock {\em Pacific J. Math.}, 28:699--719, 1969.

\end{thebibliography}
\end{document}